\documentclass{amsart}

\usepackage{amsmath}
\usepackage{amssymb}
\usepackage{wasysym}
\usepackage{graphicx}
\usepackage[all]{xy}
\input{quivers.sty}

\newtheorem{theorem}{Theorem}[section]
\newtheorem{lemma}[theorem]{Lemma}

\newtheorem{corollary}[theorem]{Corollary}
\newtheorem{proposition}[theorem]{Proposition}

\theoremstyle{definition}
\newtheorem{definition}[theorem]{Definition}
\newtheorem{example}[theorem]{Example}

\theoremstyle{remark}
\newtheorem{remark}[theorem]{Remark}

\newtheorem{problem}[theorem]{Problem}
\newtheorem{conjecture}[theorem]{Conjecture}

\numberwithin{equation}{section}

\newcommand{\GL}{\operatorname{GL}}
\newcommand{\Gr}{\operatorname{Gr}}
\newcommand{\module}{\operatorname{mod}}

\newcommand{\Rep}{\operatorname{Rep}}
\newcommand{\Mod}{\operatorname{Mod}}
\newcommand{\Hom}{\operatorname{Hom}}

\newcommand{\End}{\operatorname{End}}
\newcommand{\Ext}{\operatorname{Ext}}
\newcommand{\ext}{\operatorname{ext}}
\newcommand{\E}{\operatorname{E}}
\newcommand{\Ker}{\operatorname{Ker}}

\newcommand{\Img}{\operatorname{Im}}
\newcommand{\codim}{\operatorname{codim}}
\newcommand{\T}{\operatorname{T}}

\renewcommand{\L}{\operatorname{L}}
\newcommand{\R}{\operatorname{R}}
\newcommand{\Perp}{{^\perp}\!}

\newcommand{\rank}{\operatorname{rank}}
\newcommand{\reg}{{\operatorname{reg}}}
\newcommand{\red}{\operatorname{red}}

\newcommand{\SI}{\operatorname{SI}}
\newcommand{\SIR}{\operatorname{SIR}}
\newcommand{\Spec}{\operatorname{Spec}}
\newcommand{\Proj}{\operatorname{Proj}}
\newcommand{\dslash}[1]{/\!\!/_{\!\!_#1}}
\newcommand{\mc}[1]{\mathcal{#1}}
\newcommand{\mb}[1]{\mathbb{#1}}

\newcommand{\br}[1]{\overline{#1}}
\newcommand{\innerprod}[1]{\langle#1\rangle}
\renewcommand{\ss}[2]{^{\sigma_{#1}\cdot{\rm #2}}}
\newcommand{\sm}[1]{\left(\begin{smallmatrix}#1\end{smallmatrix}\right)}

\makeatletter
\def\equalsfill{$\m@th\mathord=\mkern-7mu
\cleaders\hbox{$\!\mathord=\!$}\hfill
\mkern-7mu\mathord=$}
\makeatother

\begin{document}

\title{Moduli of Representations I. Projections from Quivers}

%    Information for first author
\author{Jiarui Fei}
\address{Department of Mathematics, University of Michigan, Ann Arbor, MI 48109, USA}
\email{jiarui@umich.edu}
\thanks{}

%    General info
\subjclass[2010]{Primary 16G20; Secondary 14D20, 14L24, 16E30}

\date{}
\dedicatory{}
\keywords{Representation of Algebra, Quiver, Moduli space, Geometric invariant, Semi-invariant Ring, Bimodule, Orthogonal Projection, Tilting, Birational Geometry, Variation of GIT, Fano Variety}

\begin{abstract} We use categorical method and birational geometry to study moduli spaces of quiver representations. From certain ``representable" functor, we construct a birational transformation from the moduli space of representations of one quiver to another new quiver with one vertex less. The dimension vector and the stability for the new moduli are determined functorially. We introduce several relative notions of stability to study such a birational transformation. The essential case is proven to be the usual blow-ups. Moreover, we compare the induced ample divisors for the two moduli. We illustrate this theory by various examples.
\end{abstract}

\maketitle

\section{Introduction}
We propose to use birational geometry in a functorial way to study moduli spaces of representations of algebras in the sense of geometric invariant theory, e.g. King \cite{Ki}. The idea is very simple and can be roughly described as follows. For any algebra $A$, we denote by $\Mod(A)$ the abelian category of finite-dimensional left modules of $A$ and $\Rep_\alpha(A)$ the usual affine variety of $\alpha$-dimensional representations of $A$. To study the moduli space of representations in an irreducible component $\mc{C}$ of $\Rep_\alpha(A)$, we appropriately choose an $A$-$B$-bimodule $T$ and consider the functor $F=\Hom_A(T,-):\Mod(A)\to\Mod(B)$. If $F$ maps a general representation in an irreducible component $\mc{C}$ to a general representation in an irreducible component $\mc{C}_F$. Then $F$ induces a rational map $f$ from $\mc{C}$ to the moduli stacks $\Mod_{\mc{C}_F}(B)$. This procedure is explained in detail in Section \ref{S:bimodule}.
Let $\Mod_{\mc{C}}^{\sigma}(A)$ be the GIT quotient of $\sigma$-semi-stable representations of $\mc{C}$. It is a quotient of an open substack of $\Mod_{\mc{C}}(A)$. Under some nice property of $F$,
$f$ descends to a rational map of the GIT quotients $\Mod_{\mc{C}}^{\sigma}(A)\to \Mod_{\mc{C}_F}^{\sigma_F}(B)$ for certain stability $\sigma_F$. This construction is explained in detail in Section \ref{S:FC}. If we can understand this rational map and the quotient downstairs, then the other one can be understood as well.

Although this idea can be applied to any algebra, as a first attempt we only focus on finite-dimensional quiver algebras. We have the following reasons for this. The varieties of representations are all affine spaces, in particular smooth and irreducible. Otherwise, one has to detect irreducible components and works with possibly highly singular varieties. For smooth variety, Mumford's semi-invariant construction gives all possible good quotients in the category of varieties or schemes. Moreover, the simple nature of quiver algebras allow us to refine many general results. In fact, we think that it is even possible to do some classification in low dimensions. Finally, this theory has a good application in Schubert calculus \cite[Section 7]{DW2}. Since we hope to stay in the world of quivers, the functors that we pick are very special. They are orthogonal projections, introduced by Geigle and Lenzing \cite{GL}. Its induced rational map turns out to be birational in the orthogonal case. This approach may be also suggested by Schofield at the end of the introduction of \cite{S3}. In \cite{S3}, Schofield studied the birational equivalence among the moduli spaces targeting the rationality problem. However, we focus more on the geometry of the birational transformation. We also show by example that the answer to the question proposed in \cite[Introduction]{S3} is negative in general.

We state several main results of this notes. Given a dimension vector $\alpha$ with a stability $\sigma_\beta$, we consider an exceptional representation $E$ (right) orthogonal to a general $\alpha$-dimensional representation. The right orthogonal projection functor $\R_{Q_E}$ projects the module category $\Mod(Q)$ to $\Mod(Q_E)$ for another quiver $Q_E$ with one vertex less. This functor also determine a new dimension vector $\alpha_\epsilon$ and a new stability $\sigma_{\beta_E^\vee}$ of $Q_E$. Let $q: \Rep_\alpha\ss{\beta}{ss}(Q)\to \Mod_\alpha^{\sigma_\beta}(Q)$ be the GIT quotient map.
\begin{theorem} The functor $\R_{Q_E}$ induces a birational transformation $$\varphi_E:\Mod_\alpha^{\sigma_\beta}(Q)\dashrightarrow\Mod_{\alpha_\epsilon}^{\sigma_{\beta_E^\vee}}(Q_E),$$
which maps $q(\Rep_\alpha\ss{\beta}{ss}(\Perp E))$ isomorphically onto its image.
\end{theorem}

We say a stability $\sigma_\beta$ lying on a {\em wall} of $\alpha$ if $\Rep_\alpha(Q)$ contains a strict $\sigma_\beta$-semi-stable point.
The orthogonal projection functor also determines a stability $\sigma_{\tilde{\beta}_E^\vee}$ on $Q$.

\begin{theorem} \label{T:intro2} Assume some mild conditions. If $\beta\tilde{\beta}_E^\vee$ only crosses a single wall, then
$\varphi_E$ is the blow-up of $\Mod_{\alpha_\epsilon}^{\sigma_{\beta_\epsilon^\vee}}(Q_E)$ along an irreducible subvariety which has dimension $-\innerprod{\epsilon,\alpha-\epsilon}_Q$ if non-empty. 
\end{theorem}

Both the blow-up locus and the exceptional divisor can be described in terms of the representation theory. The exceptional divisor is exactly the quotient of those representations having $E$ as a quotient representation, and the blow-up locus is exactly the quotient of image of $\R_{Q_E}$ on those representations having $E$ as a subrepresentation.

The GIT construction also gives us an ample divisor on the quotient for free, namely the one induced from the $G$-linearization. We denote by $D_\beta$ the one from the stability $\sigma_\beta$.

\begin{theorem} Under some mild assumption, $D_\beta=\varphi_E^*(D_{\beta_\epsilon^\vee})-\innerprod{\beta,\epsilon}_Q E_\beta.$\\
Here, $E_\beta$ is an irreducible divisor which can be explicitly described. In the situation of Theorem \ref{T:intro2}, it is the exceptional divisor of the blow-up.
\end{theorem}

This notes is organized as follows. In Section \ref{S:GIT}, we review some basics on the GIT moduli of quiver representations.
In Section \ref{S:bimodule}, we explain how to construct algebraic morphisms out of a functor represented by a bimodule. Lemma \ref{L:morphism} is fundamental to the whole notes.
In Section \ref{S:OP}, we review some basics on the orthogonal projection. A concrete description of the representability of this functor (Lemma \ref{L:TD}) may be new. Next, we make a crucial definition of $E$-regular representations and give several characterizations (Lemma \ref{L:Ereg}). We provide a family of surface examples to illustrate Lemma \ref{L:morphism}.
In Section \ref{S:FR}, we introduce the fundamental rank to prove some basic properties of the exceptional set $\mc{E}_\alpha$ including its irreducibility.
In Section \ref{S:FC}, we introduce a series of relative notions to study how the stability changes under the orthogonal projections. After that, we give our first main result Theorem \ref{T:birational} on a functorial construction of birational transformations of the GIT quotients. We illustrate this result by continuing the examples in Section \ref{S:OP}.
One important corollary builds up the connection between the functorial construction and the wall-crossing, which leads to the important definition of the core of the $G$-ample cone.
The functorial construction is further refined in Section \ref{S:SC} by using a result of M. Thaddeus. We show in Theorem \ref{T:blow-up} that the essential case of that birational transformation is the usual blow-up. This is our second main result. It is furnished with an interesting example in dimension three.
In Section \ref{S:IAD}, we compare the induced ample divisors under the birational transformation. Theorem \ref{T:IAD} is our third main result. We also illustrate it by examples discussed before. We consider in particular the induced divisors from the anti-canonical characters and show they agree with the anti-canonical classes in most cases.

Finally, we want to make several little remarks on our language and notations. Although the moduli spaces we interested in are the classical GIT quotients, for our convenience we introduce the moduli stacks because the functors that we consider work better at the level of moduli stacks. We use $\Rep$ to denote the affine variety of representations and replace $\Rep$ by $\Mod$ to denote the corresponding moduli stack. For example, $\Mod_\alpha(Q)$ is the moduli stack of all $\alpha$-dimensional representations of $Q$, and similarly $\Mod_\alpha\ss{\beta}{ss}(Q)$ is the moduli stack of all $\alpha$-dimensional $\sigma_\beta$-semi-stable representations. Note the difference between the moduli stack $\Mod_\alpha\ss{\beta}{ss}(Q)$ and the GIT quotient $\Mod_\alpha^{\sigma_\beta}(Q)$. When used without any subscript or superscript, it coincides with the module category. We indulge ourself in this abuse of notation.

The second remark is that if we put an irreducible variety in an argument for a single representation, we mean a general representation in that variety. For example, $\hom_Q(M,C^M)$ means the dimension of $\Hom_Q(M,N)$, where $N$ is a general representation of $C^M$. If the variety is $\Rep_\alpha(Q)$, we abbreviate it by $\alpha$, e.g. $\hom_Q(\alpha,C^M)$. If the variety is a subvariety of a product of two varieties, then this variety can replace a bi-argument. For example, $\hom_Q(C)$ means the dimension of $\Hom_Q(M,N)$ for a general element $(M,N)\in C$.

Finally, our vectors are exclusively row vectors. If an arrow of a quiver is denoted by a lowercase letter, then we use the same capital letter for its linear map of a representation. For direct sum of $n$ copy of $M$, we write $nM$ instead of the traditional $M^{\oplus n}$. If $M$ has a projective (resp. injective) presentation $P_1\to P_0$ (resp. $I_0\to I_1$), then we may use the shorthand $M=P_1\to P_0$ (resp. $M=I_0\to I_1$). Similar notation for injective presentation For any other unexplained terminology or notation in the representation theory of quivers, we refer the readers to \cite{DW2}.

\section{Basics on Moduli of Quiver Representations} \label{S:GIT}
We provide some preliminary background on the GIT moduli spaces of quiver representations. For a good introduction or more detailed treatment, we recommend \cite{Ki,R}.
Let $Q$ be a finite quiver with the set of vertices $Q_0$ and the set of arrows $Q_1$. If $a\in Q_1$ is an arrow, then $ta$ and $ha$ denote its tail and its head respectively. Throughout $k$ is a algebraically closed field of characteristic $0$. Fix a {\em dimension vector} $\alpha$, the space of all $\alpha$-dimensional representations is
$$\Rep_\alpha(Q):=\bigoplus_{a\in Q_1}\Hom(k^{\alpha(ta)},k^{\alpha(ha)}).$$
The product of general linear group $\GL_\alpha:=\prod_{v\in Q_0}\GL_{\alpha(v)}$ acts on $\Rep_\alpha(Q)$ by the natural base change. This action has a {\em kernel}, which is the multi-diagonally embedded $k^*$. Two representations $M,N\in\Rep_\alpha(Q)$ are isomorphic if they lie in the same $\GL_\alpha$-orbit. Any {\em character} or {\em weight} of $\GL_\alpha$ has the form
$$\{g(v)\mid v\in Q_0\}\to\prod\big(\det g(v)\big)^{\sigma(v)},\ \sigma(v)\in\mb{Z}.$$
We define the subgroup $\GL_\alpha^\sigma$ to be the kernel of the character map. The semi-invariant ring $\SIR_\alpha^\sigma(Q):=k[\Rep_\alpha(Q)]^{\GL_\alpha^\sigma}$ of weight $\sigma$
is $\sigma$-graded: $\oplus_{n\geqslant 0} \SI_\alpha^{n\sigma}(Q)$, where
$$\SI_\alpha^{\sigma}(Q):=\{f\in k[\Rep_\alpha(Q)]\mid g(f)=\sigma(g)f, \forall g\in\GL_\alpha\}.$$

A representation $M\in\Rep_\alpha(Q)$ is called {\em $\sigma$-semi-stable} if there is some non-constant $f\in \SIR_\alpha^\sigma(Q)$ such that $f(M)\neq 0$. It is called {\em stable} if the orbit $\GL_\alpha^\sigma M$ is closed of dimension $\dim\GL_\alpha^\sigma-1$. We denote the set of all $\sigma$-semi-stable (resp. $\sigma$-stable, $\sigma$-unstable) representations in $\Rep_\alpha(Q)$ by $\Rep_\alpha\ss{}{ss}(Q)$ (resp. $\Rep_\alpha\ss{}{st}(Q)$, $\Rep_\alpha\ss{}{un}(Q)$). When $\Rep_\alpha\ss{}{ss}(Q)$ (resp. $\Rep_\alpha\ss{}{st}(Q)$) is non-empty, we say that $\alpha$ is $\sigma$-semi-stable (resp. $\sigma$-stable). Based on Hilbert-Mumford criterion, King provides a simple criterion for the stability of a representation.
\begin{lemma} \cite[Proposition 3.1]{Ki}  \label{L:King} A representation $M$ is $\sigma$-semi-stable (resp. $\sigma$-stable) if and only if $\sigma(\br{M})=0$ and $\sigma(\br{L})\leqslant 0$ (resp. $<0$) for any non-trivial subrepresentation $L$ of $M$.
\end{lemma}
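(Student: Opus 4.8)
The plan is to derive the statement from the Hilbert--Mumford numerical criterion, following King. Because $\Rep_\alpha(Q)$ is an affine space and the linearization of the $\GL_\alpha$-action is the one twisted by the character $\sigma$, the criterion applies in its affine form: $M$ is $\sigma$-semi-stable if and only if $\langle\sigma,\lambda\rangle\leqslant 0$ for every one-parameter subgroup $\lambda$ of $\GL_\alpha$ for which $\lim_{t\to 0}\lambda(t)\cdot M$ exists in $\Rep_\alpha(Q)$, where $\langle\sigma,\lambda\rangle\in\B{Z}$ is defined by $\sigma(\lambda(t))=t^{\langle\sigma,\lambda\rangle}$ (the orientation being fixed by the paper's convention $(g\cdot f)(M)=f(g^{-1}M)$ for the action on functions). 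So the first step is to record this reduction, which says that a non-constant element of $\SIR_\alpha^\sigma(Q)$ can avoid vanishing at $M$ exactly when the closure of $\GL_\alpha^\sigma\cdot M$ misses the common zero locus of the positive-degree semi-invariants. Throughout one takes $\sigma(\alpha)=0$ (equivalently, the kernel $k^*$ of the action lies in $\GL_\alpha^\sigma$); otherwise every positive-degree semi-invariant vanishes identically and nothing is semi-stable.

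Next I would set up the dictionary between one-parameter subgroups and $\B{Z}$-filtrations. Up to conjugation, a one-parameter subgroup $\lambda$ of $\GL_\alpha$ is a choice, for each $v\in Q_0$, of a grading $k^{\alpha(v)}=\bigoplus_n V_n(v)$ on which $\lambda(t)$ acts by $t^n$ on $V_n(v)$. Inspecting the matrix blocks of $\lambda(t)M_a\lambda(t)^{-1}$ for $a\in Q_1$, the block from $V_n(ta)$ to $V_m(ha)$ is scaled by $t^{m-n}$, so $\lim_{t\to 0}\lambda(t)\cdot M$ exists precisely when each subspace $F_{\geqslant n}:=\bigoplus_{m\geqslant n}V_m$ is a subrepresentation of $M$. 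Thus one-parameter subgroups with a limit correspond to finite $\B{Z}$-filtrations of $M$ by subrepresentations, and $\lambda$ is conjugate into the kernel $k^*$ if and only if the corresponding filtration is trivial, i.e.\ $F_{\geqslant n}\in\{0,M\}$ for all $n$. The weight then computes as $\langle\sigma,\lambda\rangle=\sum_n n\,\sigma(\underline{\dim}\,V_n)$, and an Abel summation together with $\sigma(\alpha)=0$ rewrites this as a finite sum $\sum_n\sigma(\dim_Q(F_{\geqslant n}))$ in which only the $n$ with $F_{\geqslant n}$ a proper nonzero subrepresentation contribute.

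With this, both implications are immediate. If $\sigma(\dim_Q(L))\leqslant 0$ for every non-trivial subrepresentation $L$, then every weight $\langle\sigma,\lambda\rangle$ coming from a $\lambda$ with a limit is a sum of nonpositive numbers, hence $\leqslant 0$, so $M$ is $\sigma$-semi-stable; if in addition all such $\sigma(\dim_Q(L))$ are negative, then for $\lambda$ not in $k^*$ the corresponding sum is nonempty and strictly negative, giving $\langle\sigma,\lambda\rangle<0$ and hence, matching the strict Hilbert--Mumford inequalities against the orbit-closure condition in the definition (note $\dim\GL_\alpha^\sigma-1=\dim(\GL_\alpha^\sigma/k^*)$, i.e.\ finite stabilizer modulo the kernel with closed orbit), $M$ is $\sigma$-stable. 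Conversely, a single non-trivial subrepresentation $L$ produces the two-step filtration $M\supseteq L\supseteq 0$, that is, a one-parameter subgroup not in $k^*$ with $\langle\sigma,\lambda\rangle=\sigma(\dim_Q(L))$; so $\sigma$-semi-stability forces $\sigma(\dim_Q(L))\leqslant 0$ and $\sigma$-stability forces $\sigma(\dim_Q(L))<0$.

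I expect the only genuine friction to be bookkeeping of signs and of the diagonal torus: fixing the orientation so that ``semi-stable'' matches $\langle\sigma,\lambda\rangle\leqslant 0$ rather than $\geqslant 0$; indexing the filtration so that $F_{\geqslant n}$ (not $F_{\leqslant n}$) are the subrepresentations; and checking that the orbit-dimension condition $\dim\GL_\alpha^\sigma-1$ in the stated definition of ``stable'' is exactly the properly-stable condition to which the strict numerical inequalities correspond. None of these is deep, but each is a place where the inequality can come out the wrong way.
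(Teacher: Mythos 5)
Your proposal is correct and follows exactly the route the paper intends: the lemma is quoted without proof from King \cite[Proposition 3.1]{K}, whose original argument is precisely this reduction to the affine Hilbert--Mumford criterion for the $\sigma$-twisted linearization, the dictionary between one-parameter subgroups admitting a limit and $\B{Z}$-filtrations by subrepresentations, and the Abel-summation identity $\langle\sigma,\lambda\rangle=\sum_n\sigma(\dim_Q F_{\geqslant n})$ under $\sigma(\alpha)=0$. The one step you rightly flag as needing care --- matching the strict numerical inequalities to the definition of stability via closed $\GL_\alpha^\sigma$-orbits of dimension $\dim\GL_\alpha^\sigma-1$ --- is handled in King's Lemma 2.2 and Proposition 2.5, and is the only ingredient not spelled out in your sketch.
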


The GIT quotient with respect to a linearization of character $\sigma$ is
$$\Mod_\alpha^\sigma(Q):=\Proj(\oplus_{n\geqslant 0} \SI_\alpha^{n\sigma}(Q)),$$
which is projective over the ordinary quotient $\Spec(k[\Rep_\alpha(Q)]^{\GL_\alpha})$.
If we assume that $Q$ has no oriented cycles, then $k[\Rep_\alpha(Q)]^{\GL_\alpha}=k$, so the GIT quotient is projective. In this notes, we assume that $Q$ has no oriented cycles, or equivalently the associated path algebra $kQ$ is finite-dimensional. So the category $\Rep(Q)$ has enough projective and injective objects.

The induced quotient map $q:\Rep_\alpha\ss{}{ss}(Q)\to\Mod_\alpha^\sigma(Q)$ is a {\em good categorical quotient} and the restriction of $q$ on $\Rep_\alpha\ss{}{st}(Q)$ is a {\em geometric quotient}. In general, the quotient $\Mod_\alpha^\sigma(Q)$ may be singular, but they are always normal and have {\em rational singularities} because in general these two properties are inherited by the invariant ring.

Before treating the general case, we would like to mention several reduction steps. If $\alpha$ is strictly $\sigma$-semi-stable, then it has a {\em $\sigma$-stable decomposition}: $\alpha=c_1\alpha_1\dot{+}c_2\alpha_2\dot{+}\cdots\dot{+}c_r\alpha_r$ \cite[3.2]{DW2}. \cite[Theorem 3.20]{DW2} says that in this case the GIT quotient $\Mod_\alpha^\sigma(Q)$ is just a symmetric product of $\Mod_{\alpha_i}^\sigma(Q)$'s. So we only need to deal with the more geometric cases when $\alpha$ is $\sigma$-stable. Next, if the quotient is $1$-dimensional, it is a rational normal curve, which must be $\mb{P}^1$. So we will focus on the case when the quotient has dimension at least $2$. The dimension of the quotient is given by
$$\dim\Rep_\alpha(Q)-(\dim\GL_\alpha^\sigma-1)-1=1-\innerprod{\alpha,\alpha}_Q,$$
where $\innerprod{-,-}_Q$ is the {\em Euler form} associated to $Q$. So in this case, $\alpha$ must be a {\em non-isotropic imaginary} {\em Schur root}. Here, non-isotropic imaginary means that $\innerprod{\alpha,\alpha}_Q<0$ and Schur root means that $\Hom_Q(M,M)=k$ for general $M\in\Rep_\alpha(Q)$.

Let $\Sigma_\alpha(Q)$ denote the {\em $G$-ample cone} of the action, that is, the set of all weights $\sigma$ such that $\alpha$ is $\sigma$-semi-stable. It turns out that such a weight $\sigma$ must be of the form $\sigma_\beta:=\innerprod{\beta,-}_Q$ for some dimension vector $\beta$.

For any $N\in\Rep_{\beta}(Q)$, we apply the functor $\Hom_Q(-,M)$ to the {\em canonical resolution} of $N$:
\begin{equation}\label{eq:canproj} 0\xrightarrow{}\bigoplus_{a\in Q_1}P_{ha}\otimes N(ta)\xrightarrow{}\bigoplus_{v\in Q_0}P_v\otimes N(v)\xrightarrow{}N\to 0.\end{equation}
Here, $P_v$ is the indecomposable projective representation with top $S_v$, the one-dimensional simple representation supported on the vertex $v$.
Then we get the following exact sequence
\begin{equation} \label{eq:canseq} \Hom_Q(N,M)\hookrightarrow\bigoplus_{v\in Q_0}\Hom(N(v),M(v))\xrightarrow{\phi_M^N}\bigoplus_{a\in Q_1}\Hom(N(ha),M(ta))\twoheadrightarrow\Ext_Q(N,M).\end{equation}

If $\innerprod{\beta,\alpha}_Q=0$, then $\phi_M^N$ is a square matrix. Following Schofield \cite{S1}, we define $c(N,M):=\det\phi_M^N$. Note that $c(N,M)\neq 0$ if and only if $\Hom_Q(N,M)=\Ext_Q(N,M)=0$. We denote $c_N:=c(N,-)$ and dually $c^M:=c(-,M)$. Note that $c_N\in\SI_\alpha^{\sigma_\beta}(Q)$. We call them Schofield's semi-invariants. In fact, they span $\SI_\alpha^{\sigma_\beta}(Q)$ over $k$ \cite[Theorem 1]{DW1}. So if $M$ is a $\sigma_\beta$-semi-stable representation, then there is some $n\in\mb{N}$ and $N'\in\Rep_{n\beta}(Q)$ such that $c_{N'}(M)\neq 0$.
Clearly $c_N$ defines a $\GL_\alpha$-invariant divisor on $\Rep_\alpha(Q)$ and we denote it by $C_N$. Let $q:\Rep_\alpha\ss{\beta}{ss}(Q)\to\Mod_\alpha^{\sigma_\beta}(Q)$ be the quotient map, then $C_N$ induces an ample effective divisor $q_*(C_N)$ on $\Mod_\alpha^{\sigma_\beta}(Q)$. All divisors of this form are linear equivalent and we denote it by $D_\beta$. So we get not only a projective quotient but also an induced ample divisor for free. Its space of global section $H^0(\Mod_\alpha^{\sigma_\beta}(Q),D_\beta)$ is isomorphic to $\SI_\alpha^{\sigma_\beta}(Q)$. Certainly, there are dual statements for $c^M$.

For any dimension vector $\alpha$, there is a canonical choice of weight, which is $$\sigma_{ac}=\innerprod{\alpha,-}_Q-\innerprod{-,\alpha}_Q=\innerprod{\alpha+\tau^{-1}\alpha,-}_Q=-\innerprod{-,\alpha+\tau\alpha}_Q.$$
To simplify our notation, sometimes we denote the dimension vector $\alpha+\tau^{-1}\alpha$ also by $ac$. One importance of this weight was discovered by Schofield in \cite[Theorem 6.1]{S2}.

\begin{proposition} $\alpha$ is a Schur root if and only if it is $\sigma_{ac}$-stable.
\end{proposition}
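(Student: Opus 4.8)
The plan is to deduce both directions from King's criterion (Lemma~\ref{L:King}); note first that $\sigma_{ac}(\alpha)=\innerprod{\alpha,\alpha}_Q-\innerprod{\alpha,\alpha}_Q=0$, so $\sigma_{ac}$-(semi-)stability of $\alpha$ is entirely about the sign of $\sigma_{ac}$ on proper subrepresentations. For ``$\sigma_{ac}$-stable $\Rightarrow$ Schur root'', I would pick a $\sigma_{ac}$-stable $M\in\Rep_\alpha(Q)$ and use that the $\sigma_{ac}$-semi-stable representations form an abelian subcategory $\D{S}$ of $\Rep(Q)$ (standard; see \cite{K}). A sub-object of $M$ in $\D{S}$ is a subrepresentation $L\subseteq M$ with $\sigma_{ac}(\dim_Q L)=0$, hence $L=0$ or $L=M$ by $\sigma_{ac}$-stability, so $M$ is simple in $\D{S}$; by Schur's lemma $\End_Q(M)=\End_{\D{S}}(M)$ is a finite-dimensional division algebra over the algebraically closed field $k$, i.e.\ $\End_Q(M)=k$, so $M$ is a brick. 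Since $\{N\in\Rep_\alpha(Q)\mid\dim\Hom_Q(N,N)=1\}$ is open (upper semi-continuity of $\dim\Hom_Q(-,-)$) and non-empty, the general representation of dimension $\alpha$ is a brick, that is, $\alpha$ is a Schur root.

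For the converse, assume $\alpha$ is a Schur root and take $M\in\Rep_\alpha(Q)$ general enough that (i) $\End_Q(M)=k$ and (ii) every subrepresentation $L\subseteq M$ has dimension vector $\beta$ with $\ext(\beta,\alpha-\beta)=0$, where $\ext(-,-)$ is Schofield's generic value of $\dim\Ext_Q$. Property (ii) can be arranged because, by Schofield's theorem on general subrepresentations, for each $\beta\leqslant\alpha$ with $\ext(\beta,\alpha-\beta)\neq 0$ the locus of representations admitting a subrepresentation of dimension $\beta$ is a proper closed subset of $\Rep_\alpha(Q)$ (the image of a closed subscheme of $\Rep_\alpha(Q)\times\prod_{v\in Q_0}\Gr(\beta(v),\alpha(v))$, which is proper over $\Rep_\alpha(Q)$), and only finitely many such $\beta$ occur. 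By Lemma~\ref{L:King} it suffices to prove $\sigma_{ac}(\dim_Q L)<0$ for every proper non-zero subrepresentation $L\subseteq M$; write $N=M/L$, so $L,N\neq 0$. Because $M$ is an indecomposable brick, any $f\in\Hom_Q(N,M)$ composed with the projection $M\to N$ gives a scalar endomorphism of $M$ which must vanish (a non-zero one would split $0\to L\to M\to N\to 0$), whence $f=0$; thus $\Hom_Q(N,M)=0$, symmetrically $\Hom_Q(M,L)=0$, and so $\Hom_Q(N,L)=0$. Applying $\Hom_Q(N,-)$ to the sequence, if moreover $\Ext_Q(N,L)=0$ we would get $\Hom_Q(N,N)=0$, which is absurd; hence $\Ext_Q(N,L)\neq 0$. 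Using $\Hom_Q(M,L)=0$ and additivity of the Euler form one then finds
\begin{equation*}
\sigma_{ac}(\dim_Q L)=\innerprod{\alpha,\dim_Q L}_Q-\innerprod{\dim_Q L,\alpha}_Q=-\dim\Ext_Q(N,L)-\innerprod{\dim_Q L,\dim_Q N}_Q,
\end{equation*}
and by (ii) $\innerprod{\dim_Q L,\dim_Q N}_Q=\dim\Hom_Q(X,Y)\geqslant 0$ for general $X\in\Rep_\beta(Q)$, $Y\in\Rep_{\alpha-\beta}(Q)$. Hence $\sigma_{ac}(\dim_Q L)\leqslant -1<0$, so $M$ is $\sigma_{ac}$-stable.

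The one real obstacle is point (ii): one must know that the general representation of a Schur root has \emph{all} of its subrepresentations of ``generic'' dimension vectors $\beta$ (those with $\ext(\beta,\alpha-\beta)=0$), which is where Schofield's general-subrepresentation theorem and the properness of the incidence variety of subrepresentations are needed. Everything else reduces to bookkeeping with the Euler form and the long exact $\Hom_Q$/$\Ext_Q$ sequences of $0\to L\to M\to N\to 0$.
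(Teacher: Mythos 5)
The paper does not prove this statement at all: it is quoted directly from Schofield, \cite[Theorem 6.1]{S2}, so there is no in-paper argument to compare against. Your proof is correct and is essentially a reconstruction of Schofield's original argument. Both directions check out. For ``$\sigma_{ac}$-stable $\Rightarrow$ Schur'': a stable representation is a brick (either via simplicity in the abelian category of semistables of weight zero, or more directly because a non-invertible non-zero $f\in\End_Q(M)$ would give $\sigma_{ac}(\dim\Ker f)<0$ and $\sigma_{ac}(\dim\Img f)<0$ summing to $\sigma_{ac}(\alpha)=0$), and openness of the brick locus finishes it. For the converse, your key computation is right: with $L\subseteq M$ proper non-zero, $N=M/L$, brickness gives $\Hom_Q(N,M)=0$, hence $\Hom_Q(N,L)=0$ and, via the long exact sequence, $\Ext_Q(N,L)\neq 0$, so $\innerprod{\dim N,\dim L}_Q\leqslant -1$; your genericity condition (ii) gives $\innerprod{\dim L,\dim N}_Q=\hom_Q(\dim L,\dim N)\geqslant 0$, whence $\sigma_{ac}(\dim L)=\innerprod{\dim N,\dim L}_Q-\innerprod{\dim L,\dim N}_Q<0$. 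The one input you flag as the ``real obstacle'' — that a general representation of dimension $\alpha$ only has subrepresentations of dimensions $\beta$ with $\ext_Q(\beta,\alpha-\beta)=0$ — is available inside the paper itself as Lemma \ref{L:codimgen} (the codimension of $\Rep_{\beta\hookrightarrow\alpha}(Q)$ equals $\ext_Q(\beta,\alpha-\beta)$), together with the finiteness of the set of $\beta\leqslant\alpha$; so your argument is self-contained modulo results the paper already invokes.
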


\section{Bimodules and Stacks} \label{S:bimodule}
Suppose that an affine algebraic $k$-group $G$ acts on a $k$-scheme $X$.
Recall that the quotient stack $[X/G]$ is the category whose objects are principal $G$-bundles $\pi: P\to S$ together with a $G$-equivalent morphism $\phi:P\to X$. Morphisms are Cartesian diagram
such that $\phi f=\phi'$.
$$\xymatrix @C=2.5pc {
P \ar[rr]^{\phi} \ar[d]^{\pi} \ar[dr]^{f} && X\\
S \ar[dr]^{g} & P' \ar[d]^{\pi'} \ar[ur]_{\phi'} & \\
& S'
}$$

This is a {\em category fibred on groupoids} with structure functor $p$ sending the above morphism to $S'\xrightarrow{g}S$.
The fibre $p^{-1}(S)$ is a groupoid called $S$-points of $[X/G]$, denoted by $[X/G](S)$. Clearly the $k$-points are in one-to-one correspondence with the orbits of the action.
In particular, a scheme is naturally a quotient stack with $G=\{e\}$.

A morphism of quotient stacks $F:[X/G]\to [Y/H]$ is a functor between the categories commuting with the structure functors.
The stack $[X/G]$ is an {\em Artin stack} with the {\em atlas} the natural morphism $X\to [X/G]$ \cite[Example 2.29]{G}.

Let $A$ be some path algebra $kQ$. By the moduli stack $\Mod_\alpha(A)$, we mean the quotient stack $[\Rep_\alpha(A)/\GL_\alpha]$.
Let $B$ be another $k$-algebra, and $T$ be an $(A,B)$-bimodule. We assume that $T=\bigoplus T_i$ is a basic $A$-module with $\End_A(T)=B$. We fix for each $T_i$ a projective resolution $0\to P_1^i\to P_0^i\to T_i\to 0$. We also fix for each $\Hom_A(T_i,T_j)$ a basis $h_{ij}$ and its lift $\tilde{h}_{ij}$ through the following diagram:
$$\xymatrix @C=2.5pc {
0 \ar[r] & P_1^i \ar[r] \ar[d]  & P_0^i \ar[r]^{} \ar[d]^{\tilde{h}_{ij}} &  T_i \ar[d]^{h_{ij}} \ar[r] & 0\\
0 \ar[r] & P_1^j \ar[r]  & P_0^j \ar[r] & T_j \ar[r] & 0\\
}$$
\begin{definition} A representation $M$ is called {\em $T$-regular} if applying $\Hom_A(-,M)$ to a projective resolution $0\to P_1\to P_0\to T\to 0$, we get a surjective map $\Hom_A(P_0,M)\twoheadrightarrow\Hom_A(P_1,M)$. A dimension vector $\alpha$ is called $T$-regular if there is a $T$-regular representation in $\Rep_\alpha(A)$. We denote the set of all $T$-regular representations in $\Rep_\alpha(A)$ by $\Rep_\alpha(T^{\reg})$.
\end{definition}
Note that being $T$-regular is an open condition and does not depend on the choice of the projective resolution.
For simplicity, we assume that $\alpha$ is $T$-regular. This is equivalent to say that for each $i$, $\Hom(P_0^i,M)\to\Hom(P_1^i,M)$ is surjective for general $M\in\Rep_\alpha(A)$.
Later we will see that this is always the case for functors interested to us.

Let $M_{(n,m)}$ be the affine space of $n\times m$ matrices, and $M_{(n,m)}^0$ be the subvariety of full-rank matrices.

\begin{lemma} There is a morphism $M_{(n,n-1)}^0\to k^n$ sending a matrix $M$ to a non-zero vector $v$ such that $vM=0$.
\end{lemma}

\begin{proof} The $i$-th coordinates of $v$ can be chosen as the $i$-th maximal minor of $M$ with coefficient $(-1)^i$.
\end{proof}

\begin{corollary} \label{C:nullbase} For any $x\in M_{(n,n-k)}^0$ there is a neighborhood $U_x$ of $x$ with a morphism $U_x\to M_{(k,n)}^0$ sending a matrix $M$ to a basis of the null space of $M$.
\end{corollary}

\begin{proof} We prove by induction on $k$. The lemma is for $k=1$. Since $x$ has full rank, there is some non-vanishing maximal minor $m_I$ of $x$. Let $U_I$ be the open set defined by non-vanishing of the $I$-th maximal minor. We can add a fixed vector to any $y\in U_I$ such that the $n\times n-k+1$ matrix is still of full rank. Then our result follows from the induction.
\end{proof}

\begin{lemma} \label{L:open} Let $M$ be any $T$-regular representation with $\dim\Hom_A(T_i,M)=\beta_i$.
The functor: $\Hom_A(T,-): \module A\to \module B$ induces a morphism respecting orbits from an open neighborhood $U_M$ of $M$ to $\Rep_\beta(B)$.
\end{lemma}

\begin{proof} By Corollary \ref{C:nullbase}, we have an open neighborhood $U_M$ of $M$ such that there is a morphism from $U_M$ sending $N\in U_M$ to the null space of $\Hom_A(P_0,N)\twoheadrightarrow\Hom_A(P_1,N)$. The null space is nothing but $\Hom_A(T,N)$. The above diagram with fixed lift $\tilde{h}_{ij}$ allow us to define an element in $\Rep_\beta(B)$. This morphism respects orbits because it lifts the functor between module categories.
\end{proof}

\begin{lemma} \label{L:morphism} The functor: $\Hom_A(T,-): \module A\to \module B$ induces a morphism from $\Rep_\alpha(T^{\reg})$ to $\Mod_\beta(B)$.
\end{lemma}

\begin{proof} Let $\pi_M: U_M\to \Mod_\beta(B)$ be the morphism constructed in Lemma \ref{L:open} composed with the quotient map $\Rep_\beta(B)\to \Mod_\beta(B)$.
Since all such morphisms respect orbits, they agree on $\Rep_\alpha(T^{\reg})$.
\end{proof}

\section{Orthogonal Projection} \label{S:OP}

Let us review some basic facts about the orthogonal projections, first at the level of unimodular bilinear form and later at the level of module category. Let $V$ be a unimodular lattice with bilinear form $\innerprod{-,-}$ and $\epsilon$ be a root of $V$, i.e., $\innerprod{\epsilon,\epsilon}=1$. The left (resp. right) orthogonal lattice $^\perp\epsilon$ (resp. $\epsilon^\perp$) is the sublattice
$$\{\alpha\in V\mid\innerprod{\alpha,\epsilon}=0\} \text{ (resp. }\{\alpha\in V\mid\innerprod{\epsilon,\alpha}=0\}).$$
The Coxeter transformation $\tau$ is the unique linear transformation determined by $\innerprod{\alpha,\beta}=-\innerprod{\beta,\tau\alpha}$. Clearly $^\perp \epsilon=\tau^{-1}\epsilon^\perp$, so we can focus on the left one. Following \cite{L}, we define the orthogonal projection $\R_\epsilon:V\to\Perp\epsilon$ and the dual projection $\R_\epsilon^\vee:V\to\Perp\epsilon$ by
$$\R_\epsilon(\alpha)=\alpha-\innerprod{\alpha,\epsilon}\epsilon \quad\text{ and }\quad \R_\epsilon^\vee(\alpha)=\alpha+\innerprod{\alpha,\epsilon}\tau^{-1}\epsilon.$$
Clearly for $\alpha\in \Perp\epsilon$ and $\beta\in V$, we have that
$$\innerprod{\alpha,\beta}=\innerprod{\alpha,\R_\epsilon\beta} \text{ and } \innerprod{\beta,\alpha}=\innerprod{\R_\epsilon^\vee\beta,\alpha}.$$
A direct calculation can verify that $\innerprod{\alpha,\beta}=-\innerprod{\R_\epsilon^\vee\tau^{-1}\beta,\alpha}=-\innerprod{\beta,\R_\epsilon\tau\alpha}$ for $\alpha,\beta\in\epsilon^\perp$. In particular, $\R_\epsilon\tau$ is the Coxeter transformation for $\Perp \epsilon$ and its inverse is $\R_\epsilon^\vee\tau^{-1}$ \cite[Corollary 18.1]{L}.

Let $Q$ be a finite quiver without oriented cycles. For two $kQ$-module $M$ and $N$, $M$ is said to be left {\em orthogonal} to $N$ denoted by $M\perp N$ if $\Hom_Q(M,N)=\Ext_Q(M,N)=0$. In this case we also say that $N$ is right orthogonal to $M$. The left {\em orthogonal category} $\Perp N$ is the abelian subcategory $\{M\in\Mod(Q)\mid M\perp N\}$. A $kQ$-module $E$ is called {\em exceptional} if $\Hom_Q(E,E)=k$ and $\Ext_Q(E,E)=0$, so the dimension vector of $E$ corresponds to a {\em real} Schur root $\epsilon$.
We specialize some general results in \cite{GL} (see also \cite{S1}) to the quiver case.

\begin{lemma} \cite[Proposition 3.2, 3.5]{GL} \label{L:OP} $^\perp E$ is a reflective subcategory of $\Mod(Q)$, i.e.,
there is a functor $\R_E:\Mod(Q)\to\Perp E$ right adjoint to the inclusion functor
$^\perp E\to\Mod(Q)$. In particular, ${\R}_E$ is left exact and compatible with injective
presentations.
\end{lemma}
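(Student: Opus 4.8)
The plan is to construct the reflector $\tilde{\pi}_E$ directly by universal properties, rather than invoking Geigle--Lenzing as a black box, so that the right exactness and compatibility with projective presentations become transparent. First I would recall that since $E$ is exceptional with real Schur root $\epsilon$, the subcategory $E^\perp$ is closed under kernels, cokernels, and extensions inside $\Mod(Q)$, hence is an abelian subcategory with exact inclusion; moreover $E^\perp$ has enough projectives. The key input is that $E^\perp$ is a wide subcategory equivalent to $\Mod(Q')$ for a quiver $Q'$ with one vertex fewer (Schofield), but even without naming $Q'$ one only needs: (i) for every $M\in\Mod(Q)$ there is a morphism $\eta_M\colon M\to RM$ with $RM\in E^\perp$ that is universal among maps from $M$ to objects of $E^\perp$; and (ii) this assignment is functorial. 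To get (i) I would build $RM$ by killing the ``$E$-part'' of $M$: since $\Hom_Q(E,E)=k$ and $\Ext^1_Q(E,E)=0$, one can perform a finite sequence of universal extensions and quotients using the trace and cotrace of $E$ in $M$, terminating because dimension vectors strictly decrease in the relevant coordinates. Concretely, repeatedly form $M \twoheadrightarrow M/\operatorname{tr}_E(M)$ to kill $\Hom_Q(E,-)$, then form the universal extension $0\to M'\to M''\to E^{\oplus\ext_Q(E,M')}\to 0$ to kill $\Ext_Q(E,-)$, and iterate; a finite-length/rank argument shows this stabilizes in an object of $E^\perp$, and the composite $M\to RM$ is the unit.

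Next I would verify the adjunction: given $f\colon M\to N$ with $N\in E^\perp$, at each step of the construction the map $f$ factors uniquely through the quotient $M/\operatorname{tr}_E(M)$ (because $\Hom_Q(E,N)=0$ forces $f(\operatorname{tr}_E(M))=0$) and through the universal extension (because $\Ext_Q(E,N)=0$ makes the relevant connecting map vanish), so $f$ factors uniquely as $\bar f\circ\eta_M$. Functoriality of $\tilde{\pi}_E:=R$ is then automatic from the uniqueness in the universal property: for $g\colon M_1\to M_2$, the map $\eta_{M_2}\circ g\colon M_1\to RM_2$ extends uniquely to $Rg\colon RM_1\to RM_2$, and uniqueness gives $R(g h)=Rg\,Rh$ and $R(\Id)=\Id$. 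This establishes that $\tilde{\pi}_E$ is left adjoint to the inclusion $E^\perp\hookrightarrow\Mod(Q)$.

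Right exactness of $\tilde{\pi}_E$ is then formal: a left adjoint preserves all colimits, in particular cokernels, so $\tilde{\pi}_E$ is right exact. For compatibility with projective presentations, I would argue as follows. A projective presentation $P_1\to P_0\to M\to 0$ in $\Mod(Q)$ is a right-exact sequence; applying the right-exact functor $\tilde{\pi}_E$ yields a right-exact sequence $\tilde{\pi}_E P_1\to\tilde{\pi}_E P_0\to\tilde{\pi}_E M\to 0$ in $E^\perp$. It remains to see that $\tilde{\pi}_E P$ is projective in $E^\perp$ whenever $P$ is projective in $\Mod(Q)$: for $P\in\Mod(Q)$ projective and any surjection $X\twoheadrightarrow Y$ in $E^\perp$, the adjunction gives $\Hom_{E^\perp}(\tilde{\pi}_E P, -)\cong\Hom_Q(P,-)|_{E^\perp}$, and the latter is exact on $E^\perp$ since $P$ is projective and the inclusion $E^\perp\hookrightarrow\Mod(Q)$ is exact; hence $\tilde{\pi}_E P$ is projective in $E^\perp$. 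Thus $\tilde{\pi}_E$ sends projective presentations to projective presentations.

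I expect the main obstacle to be the termination and well-definedness of the iterative ``trace-then-universal-extension'' construction of $RM$ — one must check that the process is independent of choices (which follows a posteriori from the universal property, but needs care to set up without circularity) and genuinely terminates, for which the cleanest route is a dimension count: each trace quotient strictly decreases $\hom_Q(E,-)$ once the extension obstruction is cleared, and each universal extension strictly decreases $\ext_Q(E,-)$ without reintroducing homomorphisms from $E$, so the pair $(\hom_Q(E,M),\ext_Q(E,M))$ drops in lexicographic order. Everything else — the adjunction identities, right exactness, and preservation of projectivity — is formal categorical manipulation once the unit $\eta_M$ is in hand.
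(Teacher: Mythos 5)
Your construction is essentially the paper's own (and Geigle--Lenzing's): build $\tilde{\pi}_E(M)$ from $M$ by a universal extension killing $\Ext_Q(E,-)$ together with a quotient by the trace of $E$ killing $\Hom_Q(E,-)$, then read off the adjunction from the universal property, and deduce right exactness and preservation of projectives formally. The only difference is that your iteration-plus-termination argument is unnecessary: since $kQ$ is hereditary and $E$ is exceptional, a single trace quotient followed by a single universal extension (or the two steps in the reverse order, as in the paper's diagrams) already lands in $E^\perp$ --- the long exact sequence for the universal extension shows it kills $\Ext_Q(E,-)$ without changing $\Hom_Q(E,-)$, so the process stabilizes after exactly two steps.
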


\begin{proof}[Sketch of proof.] It is useful to recall the construction in \cite{GL}, which is depicted by the following diagram. The row is the {\em universal extension} and the column is the {\em universal homomorphism}.
$$\xymatrix{
& \R_E(M) \ar@{_{(}->}[d] \ar[dr]^{r_M} & \\
eE \ar@{^{(}->}[r] & M' \ar[d] \ar@{->>}[r] & M\\
& hE &
}$$
Here, the universal extension means the extension universal with respect to the property that the connecting morphism $\Hom_Q(eE,E)\xrightarrow{\delta}\Ext_Q(M,E)$ in the long exact sequence is an isomorphism. Similarly, we mean by the universal homomorphism.

Alternatively, we can change the order of taking the universal extension and the universal homomorphism, but this leads to the same construction.
$$\xymatrix{
eE \ar@{^{(}->}[d] &  &  \\
\R_E(M) \ar@{->>}[d] \ar[dr]^{r_M} & &  \\
M'' \ar@{^{(}->}[r] & M \ar[r] & hE
}$$
Note that the composition $r_M$ is the universal homomorphism from an object of $^\perp E$ to $M$.
\end{proof}

\begin{definition} The {\em basic set} $b(M)$ of a module $M$ consists of all the non-isomorphic direct summands of $M$. We call $M$ {\em basic} if its direct summands are all pairwise non-isomorphic.

A $kQ$-module $T$ is called {\em partial tilting} if $\Ext_Q(T,T)=0$. A partial tilting module is called {\em tilting} if its direct summands generate the category $\Mod(Q)$. This is equivalent to say that the cardinality of the basic set of $T$ is equal to the number of vertices $|Q_0|$ \cite[Proposition VI.4.4]{ASS}.
\end{definition}

\begin{remark} \label{R:proj} If $I_v$ is the indecomposable injective module corresponding to a vertex $v$, then
applying $\R_E$ to $I_v$ is particularly simple. If $E$ is not injective, then $\Hom_Q(I_v,E)=0$; otherwise $\Ext_Q(I_v,E)=0$.
Clearly, $\R_E(kQ^*)=\oplus_{v\in Q_0}\R_E(I_v)$ is partial tilting. Applying $\Hom_Q(E,-)$ to the construction of Lemma \ref{L:OP}, we see that $\Ext_Q(E,\R_E(kQ^*))=0$, so $\R_E(kQ^*)\oplus E$ is a tilting module for $\Mod(Q)$. Let $I_b$ be a direct sum of elements in $b(\R_E(kQ^*))$, then the quiver of $\End_Q(I_b)$ has $|Q_0|-1$ vertices and no oriented cycles.
\end{remark}

\begin{corollary} \cite[Theorem 2.3]{S1} \label{C:OC}
The orthogonal category $\Perp E$ is (Morita) equivalent to representations of a quiver $Q_E$ having no oriented cycles with $|Q_0|-1$ vertices. The inverse functor $\iota_E: \Mod(Q_E)\to\Perp E$ is a full exact embedding into $\Mod(Q)$.
\end{corollary}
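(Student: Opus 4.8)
The plan is to realize $E^\perp$ as a module category via Morita theory and then to show that the relevant algebra is hereditary, so that this module category is the representation category of a quiver; the numerical features of that quiver ($|Q_0|-1$ vertices, no oriented cycles) will then be read off from Remark~\ref{R:proj}. First I would verify that $E^\perp$ is an \emph{exact abelian subcategory} of $\Mod(Q)$, i.e.\ closed under extensions and under kernels, images and cokernels of its own morphisms. Closure under extensions is immediate from the long exact sequences for $\Hom_Q(E,-)$ and $\Ext_Q(E,-)$; closure under the remaining operations follows by chaining those long exact sequences, using in addition that $Q$ is acyclic, so $\Ext^2_Q(E,-)=0$ annihilates the connecting maps that could obstruct it. In particular the inclusion $E^\perp\hookrightarrow\Mod(Q)$ is exact, so its left adjoint $\tilde\pi_E$ (Lemma~\ref{L:OP}) preserves projectives; hence each $\tilde\pi_E(P_v)$ is projective in $E^\perp$, and, combining this with the right exactness of $\tilde\pi_E$ and its compatibility with projective presentations (apply $\tilde\pi_E$ to a $kQ$-projective presentation of an $X\in E^\perp$, using $\tilde\pi_E X\cong X$), the module $P_b$ of Remark~\ref{R:proj} is a basic projective generator of $E^\perp$. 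The standard Morita/Gabriel theorem for a Hom-finite abelian category with a projective generator then gives an equivalence $\Hom_Q(P_b,-)\colon E^\perp\xrightarrow{\sim}\Mod(B)$ with $B:=\End_Q(P_b)$ (on the side that yields left modules), and since $P_b$ is basic, $B\cong kQ_E/I$, where $Q_E$ is the quiver of $\End_Q(P_b)$ — with $|Q_0|-1$ vertices and no oriented cycles by Remark~\ref{R:proj} — and $I$ is an admissible ideal.

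The crux is to show $I=0$, equivalently $\operatorname{gldim}E^\perp\le 1$. Given $X\in E^\perp$, take a projective resolution $0\to Q^1\to Q^0\to X\to 0$ in $\Mod(Q)$ (of length $\le 1$, $Q$ being acyclic) and argue that $\tilde\pi_E$ preserves it, so that $0\to\tilde\pi_E(Q^1)\to\tilde\pi_E(Q^0)\to X\to 0$ is a length-$\le 1$ projective resolution of $X$ inside $E^\perp$. By right exactness the only thing to check is that $\tilde\pi_E(Q^1)\to\tilde\pi_E(Q^0)$ is a monomorphism, i.e.\ that $L_1\tilde\pi_E(X)=0$, and this is where the hypothesis $\Hom_Q(E,X)=\Ext_Q(E,X)=0$ enters. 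If $E$ is projective, $\Hom_Q(E,-)$ is exact, so $\Hom_Q(E,X)=0$ forces $\Hom_Q(E,Q^1)\xrightarrow{\sim}\Hom_Q(E,Q^0)$, whence $\operatorname{tr}_E Q^1=\operatorname{tr}_E Q^0$ and the induced map on the quotients $\tilde\pi_E(Q^i)=Q^i/\operatorname{tr}_E Q^i$ is injective. If $E$ is not projective, then $\tilde\pi_E(Q^i)$ is the universal extension $0\to Q^i\to\tilde\pi_E(Q^i)\to E\otimes_k\Ext_Q(E,Q^i)\to 0$; the long exact sequence gives $\Ext_Q(E,Q^1)\xrightarrow{\sim}\Ext_Q(E,Q^0)$, and a snake-lemma comparison of the two universal extensions forces $\ker\bigl(\tilde\pi_E(Q^1)\to\tilde\pi_E(Q^0)\bigr)=0$. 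Either way $\operatorname{pd}_{E^\perp}X\le 1$, so $B\cong kQ_E$ and $\Mod(B)\cong\Mod(Q_E)$. The inverse equivalence $\iota_E\colon\Mod(Q_E)\xrightarrow{\sim}E^\perp$, post-composed with the exact, fully faithful inclusion $E^\perp\hookrightarrow\Mod(Q)$, is then the desired full exact embedding.

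The main obstacle I anticipate is precisely this last homological point — the vanishing of the first left derived functor of $\tilde\pi_E$ on $E^\perp$, i.e.\ that $\tilde\pi_E$ loses no information when applied to a short exact sequence with projective outer terms and remaining term in $E^\perp$. One could instead invoke the Geigle--Lenzing computation of the derived functors of the orthogonal projection, but I would prefer the self-contained argument above. Everything else — the exact-abelian-subcategory property of $E^\perp$, the identification of its indecomposable projectives with the $\tilde\pi_E(P_v)$, and the Morita equivalence — is a formal consequence of Lemma~\ref{L:OP} and Remark~\ref{R:proj} together with standard facts.
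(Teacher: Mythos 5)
Your proposal is correct, but note that the paper does not actually prove this statement: it is quoted from Schofield \cite{S1} (see also \cite{GL}), and the only supporting material supplied is Remark~\ref{R:proj}, which exhibits $\tilde{\pi}_E(kQ)\oplus E$ as a tilting module and reads off that the quiver of $\End_Q(P_b)$ has $|Q_0|-1$ vertices and no oriented cycles. What you have written is a correct, self-contained reconstruction of the Geigle--Lenzing/Schofield argument, and you have put your finger on the one genuinely nontrivial point: that $\tilde{\pi}_E$ carries a $kQ$-projective resolution of $X\in E^\perp$ to a projective resolution inside $E^\perp$, i.e.\ that $E^\perp$ is hereditary. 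Your case split (trace argument when $E$ is projective; comparison of universal extensions via the snake lemma when it is not, using $\Hom_Q(E,P_v)=0$ in the latter case) is sound, and it is worth doing carefully: the unlabeled lemma opening Section~\ref{S:HA} records the same preservation statement, but its quick proof there (``the kernel must be projective in $E^\perp$'') already presupposes that $E^\perp$ is hereditary, so it cannot be invoked at this stage without circularity. Two small remarks: the vanishing $\Ext^2_Q(E,-)=0$ used for closure under kernels and cokernels comes from $kQ$ being a path algebra (hence hereditary), not from acyclicity of $Q$, which is only needed for finite-dimensionality; and one should say explicitly that the unit $X\to\tilde{\pi}_E(X)$ is an isomorphism for $X\in E^\perp$ (reflectivity, Lemma~\ref{L:OP}) when deducing that $P_b$ generates. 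Neither affects the correctness of the argument.
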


We called the functor $\R_E$ in Lemma \ref{L:OP} the (right) {\em orthogonal projection through} $E$ and its composition with the equivalence in Lemma \ref{C:OC} the (right) orthogonal projection {\em to} $Q_E$, denoted by $\R_{Q_E}:\Mod(Q)\to\Mod(Q_E)$. Sometimes we do not distinguish $\Perp E$ and $\Mod(Q_E)$ if no confusion is possible.

If $E$ is not injective, then everything above has a dual statement for the right orthogonal category $E^\perp$ and left orthogonal projection $\L_E$ and $\L_{Q_E}$.
Let $\tau$ be the classical AR-transformation \cite[IV.2]{ASS} on $\Mod(Q)$. By the AR-duality \cite[Theorem IV.2.13]{ASS}, $^\perp E=\tau^{-1} E^\perp$. We define the dual right orthogonal projection $\R_E^\vee:=\L_{\tau^{-1} E}$, the left orthogonal projection through $\tau^{-1} E$.
Then the dual of Lemma \ref{L:OP} implies that $b(\R_E^\vee(kQ))$ contains all the indecomposable projective module in $^\perp E$. We denote by $P_b$ the direct sum of all elements in $b(\R_E^\vee(kQ))$.

It is well-known \cite{W} that an adjoint pair between module categories must be representable by a bimodule.
The next lemma says that the bimodule for $\R_E$ is explicitly given by $b(\R_E^\vee(kQ))$.

\begin{lemma} \label{L:TD}  $\R_E(M)=\Hom_Q(\R_E^\vee(kQ),-)$, so $\R_{Q_E}(M)=\Hom_Q(P_b,-)$.
\end{lemma}

\begin{proof} First, we treat the case when $M=I$ is injective. Apply the functor $\Hom_Q(\R_E^\vee(kQ),-)$ to the exact sequence $0\to eE\to \R_E(I)\to I\to 0$, then we get
$$\Hom_Q(\R_E^\vee(kQ),eE)\rightarrow\Hom_Q(\R_E^\vee(kQ),\R_E(I))\rightarrow\Hom_Q(\R_E^\vee(kQ),I)\rightarrow\Ext_Q(\R_E^\vee(kQ),eE).$$
Since the first and last term vanish and $\R_E^\vee$ is left adjoint to the inclusion functor, $$\Hom_Q(\R_E^\vee(kQ),\R_E(I))=\Hom_Q(kQ,\R_E(I))=\R_E(I).$$ Hence $\Hom_Q(\R_E^\vee(kQ),I)\cong \R_E(I)$.

Now for any representation $M$, we apply $\Hom_Q(\R_E^\vee(kQ),-)$ to its injective resolution $0\to M\to I_0\to I_1\to 0$. Comparing
$$0\to \Hom_Q(\R_E^\vee(kQ),M)\rightarrow\Hom_Q(\R_E^\vee(kQ),I_0)\rightarrow\Hom_Q(\R_E^\vee(kQ),I_1)$$
with $0\to \R_E(M)\to\R_E(I_0)\to\R_E(I_1)$,
We find that $\Hom_Q(\R_E^\vee(kQ),M)\cong\R_E(M)$ by the natural functoriality.
\end{proof}

The equivalence in Corollary \ref{C:OC} induces a linear isometry between the $K_0$-groups equipped with the Euler forms. In particular for $M,N\in\Perp E$,
$$\dim_Q M=\dim_Q N\iff\dim_{Q_E}\R_{Q_E}(M)=\dim_{Q_E}\R_{Q_E}(N).$$
In general, we consider any dimension vector $\alpha$. Under the functor $\R_{Q_E}$, general elements in $\Rep_\alpha(Q)$ maps to $\Mod_{\alpha_E}(Q_E)$ for some dimension vector $\alpha_E$. We also have the dual notion $\alpha_E^\vee$ for the functor $\R_{Q_E}^\vee$.

Let $\epsilon$ be a real root of $Q$. We denote by $\R_\epsilon$ the right orthogonal projection on the $K_0$-group of $\Mod(Q)$ equipped with the Euler form $\innerprod{-,-}_Q$, i.e., $\R_\epsilon(\alpha)=\alpha-\innerprod{\alpha,\epsilon}_Q\epsilon$. Similarly, we write $\R_{Q_\epsilon}$ for the composition of $\R_\epsilon$ with the linear isometry induced by the equivalence in Lemma \ref{C:OC} and $\iota_\epsilon$ for the inverse of this linear isometry.
To simplify our notation, sometimes we write $\tilde{\alpha}_\epsilon:=\R_\epsilon(\alpha),\alpha_{\epsilon}:=\R_{Q_\epsilon}(\alpha),$ and $\alpha^\epsilon:=\iota_\epsilon(\alpha)$. We need the obvious dual notions $\R_\epsilon^\vee,\R_{Q_\epsilon}^\vee,$ and $\tilde{\alpha}_\epsilon^\vee, \alpha_\epsilon^\vee$ later.

\begin{definition} A representation $M\in\Rep_\alpha(Q)$ is called {\em $E$-regular} if
$$\dim_Q\R_E(M)=\tilde{\alpha}_\epsilon \text{ or equivalently }\dim_{Q_E}\R_{Q_E}(M)=\alpha_{\epsilon}.$$
We denote the set of all $E$-regular representations by $E^{\reg}$ and $E^{\reg}\cap\Rep_\alpha(Q)$ by $\Rep_\alpha(E^{\reg})$.
\end{definition}

\begin{lemma} \label{L:Ereg} The following are equivalent for a representation $M$ \begin{enumerate}
\item $M$ is $E$-regular.
\item The universal homomorphism $M\to hE$ is surjective.
\item The functor $\R_E$ is exact on any injective resolution $0\to M\to I_0\to I_1\to 0$.
\item $\Ext_Q(\R_E^\vee(kQ),M)=0$.
\end{enumerate}
\end{lemma}

\begin{proof} $(1)\Leftrightarrow (2)$ and $(3)\Leftrightarrow(4)$ follows from the construction of Lemma \ref{L:OP} and Lemma \ref{L:TD} respectively.
For $(1)\Rightarrow(3)$, we need to show that $\R_E(I_0)\to \R_E(I_1)$ is surjective. If not, let $C$ be the cokernel, which must be injective in $^\perp E$. Since $M,I_0,I_1$ are all $E$-regular, we see that the dimension of $C$ is a multiple of $\epsilon$, which is impossible. For $(3)\Rightarrow(2)$, we consider the following diagram
$$\xymatrix @C=2.5pc {
& K \ar@{^{(}->}[d] \ar[r] & \R_E(M) \ar@{^{(}->}[d] \ar[r]^{r_M} & M  \ar@{^{(}->}[d] \\
0 \ar[r] & e_0E^{} \ar[r]^{} \ar[d]^{f} & \R_E(I_0) \ar[r]^{r_{I_0}} \ar[d]^{} & I_0 \ar[d]^{} \ar[r] & 0\\
0 \ar[r] & e_1E^{} \ar[r] \ar@{->>}[d] &  \R_E(I_1) \ar[r]^{r_{I_1}} \ar[d] &  I_1 \ar[r] \ar[d] & 0 \\
& C & 0 & 0
}$$
By the snake lemma, we get an exact sequence $0\to K \to \R_E(M)\xrightarrow{r_M} M\to C\to 0$ appeared in the construction of Lemma \ref{L:OP}. Since $E$ is exceptional, the cokernel $C$ of $f$ must be a direct sum of $E$'s. Hence $M\to C$ is the universal homomorphism map and it is surjective.
\end{proof}

According to Lemma \ref{L:Ereg}.(4), $E$-regular is the same as $\R_E^\vee(kQ)$-regular of Section \ref{S:bimodule}. The next corollary follows easily from the adjoint property of $\R_E$ and the above diagram.

\begin{corollary}
If $M$ is $E$-regular, then $\Ext_{Q}(-,\R_E(M))=\Ext_Q(-,M)$ on $^\perp E$.
\end{corollary}

In general, we have $\Ext_{Q}(-,\R_E(M))\subseteq \Ext_Q(-,M)$. Here we prove the dual statement.
\begin{lemma} \label{L:proj_ss} For any $M\in\Mod(Q)$, $\Ext_{Q}(\R_{E}^\vee(M),-)\subseteq\Ext_Q(M,-)$ on $\Perp E$. So $M\perp N$ implies $\R_{E}^\vee(M)\perp N$ for $N\in \Perp E$.
\end{lemma}

\begin{proof}
Take a projective resolution of $M:0\to P_1\to P_0\to M\to 0$, then $\R_E^\vee(P_1)\to \R_E^\vee(P_0)\to \R_E^\vee(M)\to 0$ is exact.  Let us look at the following diagram.
$$\xymatrix @C=0.5pc {
0 \ar[r] & \Hom_Q(M,N) \ar[r]\ar@{=}[d] &  \Hom_Q(P_0,N) \ar[r]\ar@{=}[d] & \Hom_Q(P_1,N) \ar[r] \ar@{=}[d] & \Ext_Q(M,N) \ar[r] \ar@{:}[d] & 0 \\
0 \ar[r] & \Hom_{Q}(\R_E^\vee(M),N) \ar[r] & \Hom_{Q}(\R_E^\vee(P_0),N) \ar[r] & \Hom_{Q}(\R_E^\vee(P_1),N) \ar[r] & \E \ar[r] & 0 \\
}$$
Since $N\in \Perp E$, the adjunction property gives the first three vertical isomorphisms, which implies the last isomorphism. Clearly $\Ext_{Q}(\R_E^\vee(M),N)$ is a subspace of $\E$.
\end{proof}

%Let $M$ be a $E$-regular representation and $r_M:\R_E(M)\to M$ be the universal map, then it induces $$r_M^e:\Ext_Q(M,M)\to\Ext_Q(\R_E(M),M)\cong \Ext_{Q_E}(\R_{Q_E}(M),\R_{Q_E}(M)).$$
%It is well-known that the tangent space of $M$ in $\Rep_\alpha(Q)$ and $\mc{Q}_M$ are the derivation $\Rep$ and \cite[Lemma 2.2]{DF}.
%So $\Mod_\alpha(Q)$ is isomorphic to $\Ext_Q(M,M)$. Note that $\Mod_\alpha(E^{\reg})$ is open in $\Mod_\alpha(Q)$, then it is not hard to see that $r_M^e$ is induced by the tangent map of $\R_{Q_E}:\Rep_\alpha(E^{\reg})\to\Mod_{\alpha_E}(Q_E)$. In particular, we have
%
%\begin{proposition} \label{P:OPT} The kernel of $r_M^e$ is isomorphic to the normal space of the orbit of $\R_{Q_E}(M)$ in the fibre $\R_{Q_E,\alpha}^{-1}(\R_{Q_E}(M))$.
%\end{proposition}
%
%The interested readers can verify that all the tangent maps in Example \ref{Ex:1}, \ref{Ex:2}, and \ref{Ex:3} at $M\notin E^\perp$ have a one-dimensional kernel.

Let us specialize the above discussion to the orthogonal case. Since any representation in $\Perp E$ is trivially $E$-regular, we get a surjective morphism $\Rep_\alpha(E^{\reg})\to\Mod_{\alpha_\epsilon}(Q_E)$ from Lemma \ref{L:morphism}. By a slight abuse of notation, we denote this morphism still by $\R_{Q_E}$.

\begin{definition} \label{D:exc}
We say that $E$ is right orthogonal to a dimension vector $\alpha$, denoted by $\alpha\perp E$, if $\Rep_\alpha(\Perp E):=\Rep_\alpha(Q)\cap {^\perp} E$ is non-empty. We say $E$ is right exceptional to $\alpha$ if in addition $\mc{E}_\alpha:=\Rep_\alpha(E^{\reg})\setminus\Perp E$ is non-empty. We call $\mc{E}_\alpha$ an {\em exceptional set} in $\Rep_\alpha(Q)$ with respect to $E$. It has codimension one in $\Rep_\alpha(Q)$ because it is open in $C_E$.
\end{definition}

Although we do not know if there are only finitely many $E$ orthogonal to a fixed $\alpha$, Lemma \ref{L:Ereg}.(2) does guarantee that there are only finitely many $E$ exceptional to $\alpha$.
The following lemma is a nice application of Luna's \'{e}tale slice \cite{Lu}.

\begin{lemma} \label{L:homofibre}\cite[Theorem 3.2]{S1} If $\alpha\perp E$, then $\Rep_\alpha(\Perp E)$ is isomorphic to the homogeneous fibre space  $\GL_\alpha\times_{\GL_{\alpha_\epsilon}}\Rep_{\alpha_\epsilon}(Q_E)$.
\end{lemma}

\begin{example} \label{Ex:1}
Consider the quiver $B^1$
$$\extendedtameAtwo{2}{1}{3}{b_1}{a_1}{c_1}{d_1}$$ with dimension vector $\alpha=(1,1,1)$. Then the only exceptional representation $E$ right orthogonal to $\alpha$ is the simple $S_1$.
Let us compute $B^1_E$ and $\alpha_\epsilon$. We observe that $P_1,P_3\in\Perp E$ and $\R_E^\vee(P_2)=P_1$. So $\Rep_\alpha(E^{\reg})$ is the whole $\Rep_\alpha(B^1)$. Moreover $B^1_E$ is the three-arrow Kronecker Quiver $\Theta_3$ with $\alpha_\epsilon=(1,1)$. Let us denote the three arrows by $a,b,c$, then the morphism $\Rep_\alpha(B^1)\to\Rep_{\alpha_\epsilon}(\Theta_3)$ can be explicitly described as $A=D_1A_1,B=C_1A_1$, and $C=B_1$.
Note that $\R_E$ crashes $\mc{E}_\alpha$ defined by $A_1=0$ to a single module $\tilde{N}_1$ represented by $\{A_1=B_1=1,C_1=D_1=0\}$.
So $\R_{Q_E}(\mc{E}_\alpha)$ is represented by $\{A=B=0,C=1\}$.

Consider the quiver $B^2$
$$\extendedtameAthree{1}{4}{3}{2}{a_2}{b_2}{e_2}{c_2}{d_2}$$ with dimension vector $\alpha=(1,1,1,1)$. Then there are three exceptional representations right orthogonal to $\alpha$. They are \begin{align*}
& 0\to E_v\to I_3\to I_v\to 0,\quad v=1,2\\
& 0\to E_3\to I_4\to I_3\to 0.
\end{align*}
Let $E=E_2$. We observe that $P_2,P_4\in \Perp E$, $\R_{E}^\vee(P_3)=P_2$, and $\R_{E}^\vee(P_1)=P_3\to P_2\oplus P_1$.
So $B^2_{E}=B^1$ with $\alpha_\epsilon=(1,1,1)$. The $E$-regularity is defined by the condition that $A_2\neq 0$ or $C_2\neq 0$. The morphism $\Rep_\alpha(E^\reg)\to\Mod_{\alpha_\epsilon}(B^1)$ can be explicitly described as $$\begin{cases}
A_1=-A_2,\\ B_1=C_2B_2,\\ C_1=E_2C_2,\\ D_1=D_2. \end{cases}$$

By symmetry $B^2_{E_1}=B^1$ and $\alpha_{\epsilon_1}=(1,1,1)$. However, $B^2_{E_3}$ is the quiver $\Theta_{2,2}:$
$$\productthreevertexcomplex{1}{2}{3}{a}{b}{c}{d}.$$ with $\alpha_{\epsilon_3}=(1,1,1)$. We claim that there is no exceptional representation orthogonal to $(1,1,1)$ for $\Theta_{2,2}$. If there is one with dimension $\epsilon=(x,y,z)$, then $\innerprod{\alpha_{\epsilon_3},\epsilon}_Q=0$ and $\innerprod{\epsilon,\epsilon}_Q=1$. Elementary calculation can show that $2z(z-y)=1$, which is impossible. So the answer to the question in \cite[introduction]{S3} is negative in general.

Consider the quiver $B^3$
$$\extendedtameAfour{b_3}{a_3}{d_3}{c_3}{e_3}{f_3}$$ with dimension vector $\alpha=(1,1,1,1,1)$.
Then there are six exceptional representations right orthogonal to $\alpha$. They are $$0\to E_{uv} \to I_v\to I_u\to 0,$$ where $u=1,2,3,v=4,5$.
Let $E=E_{35}$. We observe that $P_3,P_4\in \Perp E$, $\R_{E}^\vee(P_5)=P_3$, and $\R_{E}^\vee(P_v)=P_5\to P_3\oplus P_v$ for $v=1,2$. So $B^3_{E}=B^2$ with $\alpha_\epsilon=(1,1,1,1)$. The morphism $\Rep_\alpha(E^{\reg})\to\Mod_{\alpha_\epsilon}(B^2)$ can be explicitly described as $$\begin{cases}
A_2=-A_3,\\ B_2=B_3F_3,\\ C_2=-C_3,\\ D_2=D_3F_3,\\ E_2=E_3.\end{cases}$$

Consider the quiver $B^4$
$$\extendedtameDfour{a_4}{b_4}{c_4}{d_4}{e_4}$$ with dimension vector $\alpha=(1,1,1,1,1,2)$.
Then there are ten exceptional representations right orthogonal to $\alpha$. They are $$0\to E_{uv}\to I_6\to I_u\oplus I_v\to 0,$$ where $\{u,v\}\subset\{1,2,3,4,5\}$.
Let $E=E_{45}$. We observe that $P_4,P_5\in \Perp E$, $\R_{E}^\vee(P_6)=P_4\oplus P_5$, and $\R_{E}^\vee(P_v)=P_6\to P_5\oplus P_4\oplus P_v$ for $v=1,2,3$. So $B^4_{E}=B^3$ with $\alpha_\epsilon=(1,1,1,1,1)$. The morphism $\Rep_\alpha(E^\reg)\to\Mod_{\alpha_\epsilon}(B^3)$ can be explicitly described as $$\begin{cases}
A_3=-\det(A_4,D_4),\\
B_3=\det(A_4,E_4),\\
C_3=-\det(B_4,D_4),\\
D_3=\det(B_4,E_4),\\
E_3=\det(C_4,E_4),\\
F_3=-\det(C_4,D_4).\end{cases}$$
\end{example}

\begin{example} \label{Ex:2}
Consider the quiver $B^{4,1}$
$$\extendedtameDfive{a_5}{b_5}{c_5}{d_5}{e_5}{f_5}$$ with dimension vector $\alpha=(1,1,1,1,1,2,2)$. Then there are eighteen representations exceptional to $\alpha$. The ones interesting to us are \begin{align*}
& 0\to E_1\to I_7\to I_6\to 0\\
& 0\to E_2\to I_6\to I_5\oplus I_4\to 0
\end{align*}

To compute $B^{4,1}_{E_1}$, we observe that all $P_i$'s except $P_6$ are in $\Perp E_1$.
So $B^{4,1}_{E_1}=B^4$ with $\alpha_{\epsilon_1}=(1,1,1,1,1,2)$. The morphism $\Rep_\alpha(\Perp E_1)\to\Rep_{\alpha_{\epsilon_1}}(B^4)$ can be explicitly described as $$\begin{cases}
A_4=A_5,\\
B_4=B_5,\\
C_4=C_5,\\
D_4=D_5F_5,\\
E_4=E_5F_5.\end{cases}$$
We leave it to interested reader to verify that $B^{4,1}_{E_2}$ is also equal to $B^4$ and those are the only two among the eighteen representations.
\end{example}

\begin{example} \label{Ex:3} Consider quiver $C_3^6:$ $$\threevertextwoone{1}{2}{3}{a}{b}{c}$$ with dimension vector $\alpha=(3,4,1)$, then the only representation right exceptional to $\alpha$ is $E=(2,3,0)$. By computation, we have that $\R_E^\vee(P_1)=3(2P_2\to 3P_1),\R_E^\vee(P_2)=4(2P_2\to 3P_1)$, and $\R_E^\vee(P_3)=P_3$. So $(C_3^6)_E$ is the four-arrow Kronecker quiver $\Theta_4$ with $\alpha_\epsilon=(1,1)$. Let us denote the four arrows by $a_E,b_E,c_E,d_E$. As a good exercise, one can check that $2P_2\to 3P_1$ can be represented by the matrix $\sm{a & 0 & b \\ 0 & b & a}$, and that the morphism $\Rep_\alpha(E^{\reg})\to\Mod_{\alpha_\epsilon}(\Theta_4)$ can be represented by $A_E=\det\sm{A & 0 & AC \\ 0 & B & 0 \\ B & A & 0}, B_E=\det\sm{A & 0 & BC \\ 0 & B & 0 \\ B & A & 0}, C_E=\det\sm{A & 0 & 0 \\ 0 & B & AC \\ B & A & 0}, D_E=\det\sm{A & 0 & 0 \\ 0 & B & BC \\ B & A & 0}$.
The representations in $\mc{E}_\alpha$ have the following normal form: $A=\sm{1 & 0 & 0 & 0\\ 0 & 1 & 0 & 0\\ 0 & 0 & 0 & s}, B=\sm{0 & 1 & 0 & *\\ 0& 0 & 1 & 0 \\ 0 & 0 & 0 & t}$ and $C=(*,*,*,-1)^{\T}$. Plug in and we get the image of $\mc{E}_\alpha$ has the form \begin{equation} \label{eq:cubic} A_E=st^2, B_E=t^3, C_E=s^3, D_E=s^2t. \end{equation}
This example is taken from \cite{F2}. We will continue these examples later.
\end{example}

\section{The Exceptional Set $\mc{E}_\alpha$} \label{S:FR}
\begin{definition}
For any dimension vector $\gamma$, we define $\Hom(k^\alpha,k^\beta)_\gamma$
$$=\{(M_1,N_1,\phi)\in \Gr \binom{\alpha}{\alpha-\gamma}\times \Gr \binom{\beta}{\gamma} \times \Hom(k^\alpha,k^\beta)\mid \Ker\phi=M_1, \Img\phi=N_1\},$$
$\Hom_Q(\alpha,\beta)_\gamma$
$$=\{(M,N,M_1,N_1,\phi)\in \Rep_\alpha(Q)\times\Rep_\beta(Q)\times\Hom(k^\alpha,k^\beta)_\gamma \mid \phi\in\Hom_Q(M,N)\}.$$
\end{definition}

\begin{lemma}
$q:\Hom(k^\alpha,k^\beta)_\gamma \to \Gr\binom{\alpha}{\alpha-\gamma}\times \Gr\binom{\beta}{\gamma}$ is a principal $\GL_\gamma$-bundle. In particular, $\Hom(k^\alpha,k^\beta)_\gamma$ is smooth and irreducible.\\
$r:\Hom_Q(\alpha,\beta)_\gamma\to \Hom(k^\alpha,k^\beta)_\gamma$ is a vector bundle with fibre
$$\bigoplus_{a\in Q_1}(\Hom(k^{\gamma(ta)},k^{\gamma(ha)})\oplus \Hom(k^{(\beta-\gamma)(ta)},k^{\beta(ha)})\oplus \Hom(k^{\alpha(ta)},k^{(\alpha-\gamma)(ha)})).$$
In particular, $\Hom_Q(\alpha,\beta)_\gamma$ is smooth and irreducible with \begin{equation}
\dim(\Hom_Q(\alpha,\beta)_\gamma)=\dim\Rep_\alpha(Q)+\dim\Rep_\beta(Q)+\innerprod{\alpha,\beta}-\innerprod{\alpha-\gamma,\beta-\gamma}.
\end{equation}
%Actually, $\Hom_Q(\alpha,\beta)_\gamma$ is an affine bundle over $\Gr \binom{\alpha}{\alpha-\gamma}\times \Gr \binom{\beta}{\gamma}$. Its trivialization factors through the trivializations of $q$ and $r$. In particular, $\GL(Q,\gamma)$ acts on $\Hom_Q(\alpha,\beta)_\gamma$.
\end{lemma}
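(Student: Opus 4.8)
The plan is to unwind both bundle claims from the inside out, verifying in each case the triviality of the maps on suitable charts and then reading off the dimension count from the general principle that a vector bundle over a smooth irreducible base is smooth and irreducible of dimension equal to the sum of the two ranks.

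First I would treat $q\colon\Hom(k^\alpha,k^\beta)_\gamma\to\Gr\binom{\alpha}{\alpha-\gamma}\times\Gr\binom{\beta}{\gamma}$. Over the product of two open Schubert cells — choosing a fixed complement $k^\alpha=M_1\oplus M_1'$ with $\dim M_1'=\gamma$ and a fixed splitting $k^\beta=N_1\oplus N_1''$ — a linear map $\phi$ with $\Ker\phi=M_1$ and $\Img\phi=N_1$ is the same as an isomorphism $M_1'\xrightarrow{\sim}N_1$ precomposed with the projection $k^\alpha\twoheadrightarrow M_1'$, so the fibre is a torsor under $\GL_\gamma$ acting by postcomposition, and one checks the transition functions are compatible. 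This exhibits $q$ as a principal $\GL_\gamma$-bundle; since the base is a product of Grassmannians (smooth, irreducible) and $\GL_\gamma$ is smooth connected, the total space $\Hom(k^\alpha,k^\beta)_\gamma$ is smooth and irreducible.

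Next I would treat $r\colon\Hom_Q(\alpha,\beta)_\gamma\to\Hom(k^\alpha,k^\beta)_\gamma$. Fix a point $(M_1,N_1,\phi)$; the fibre consists of all representation structures $M\in\Rep_\alpha(Q)$, $N\in\Rep_\beta(Q)$ such that $\phi\colon M\to N$ is a morphism of representations, i.e.\ $N(a)\phi(ta)=\phi(ha)M(a)$ for every arrow $a$, together with $M_1=\Ker\phi$ being a subrepresentation and $N_1=\Img\phi$ being a subrepresentation. Working arrow by arrow, one uses the fixed splittings of $k^{\alpha(v)}$ and $k^{\beta(v)}$ along $\Ker\phi$/complement and $\Img\phi$/complement to write $M(a)$ and $N(a)$ in block form; the intertwining condition for that arrow, together with the sub-representation conditions, forces certain blocks and leaves exactly the three free blocks $\Hom(k^{\gamma(ta)},k^{\gamma(ha)})$ (the common "image part"), $\Hom(k^{(\beta-\gamma)(ta)},k^{\beta(ha)})$ (the part of $N(a)$ off the image in the source), and $\Hom(k^{\alpha(ta)},k^{(\alpha-\gamma)(ha)})$ (the part of $M(a)$ landing outside the kernel's complement). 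Linearity of all these constraints in the entries of $M(a),N(a)$ shows the fibre is the stated vector space, and the choices vary algebraically over the base, so $r$ is a vector bundle. Hence $\Hom_Q(\alpha,\beta)_\gamma$ is smooth and irreducible, with dimension $\dim\Hom(k^\alpha,k^\beta)_\gamma$ plus the fibre rank. Finally I would compute: $\dim\Gr\binom{\alpha}{\alpha-\gamma}+\dim\Gr\binom{\beta}{\gamma}+\dim\GL_\gamma+\text{(fibre rank)}$, and check this equals $\dim\Rep_\alpha(Q)+\dim\Rep_\beta(Q)+\innerprod{\alpha,\beta}-\innerprod{\alpha-\gamma,\beta-\gamma}$ using $\dim\Gr\binom{n}{k}=k(n-k)$ and the formula $\innerprod{\mu,\nu}=\sum_v\mu(v)\nu(v)-\sum_a\mu(ta)\nu(ha)$; this is a bookkeeping identity which I expect to fall out after grouping the vertex terms (which give $\gamma(\alpha-\gamma)+\gamma(\beta-\gamma)+\gamma^2$-type contributions) against the arrow terms.

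The main obstacle I anticipate is not the dimension arithmetic but the careful block-decomposition argument in the second step: one must be sure that imposing "$M_1=\Ker\phi$ is a subrepresentation" and "$N_1=\Img\phi$ is a subrepresentation" cuts out precisely the right linear subspace of $\bigoplus_a(\Hom(k^{\alpha(ta)},k^{\alpha(ha)})\oplus\Hom(k^{\beta(ta)},k^{\beta(ha)}))$ — in particular that the surviving freedom is exactly the three listed Hom-spaces per arrow and nothing more (the intertwining equation $N(a)\phi(ta)=\phi(ha)M(a)$ pins down the remaining block of $M(a)$ relative to $N(a)$ via the fixed isomorphism $\phi$ between the complement of $\Ker\phi$ and $\Img\phi$). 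Once this local normal form is established, functoriality in the base point gives the vector-bundle structure and everything else is formal.
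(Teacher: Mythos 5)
Your proposal is correct and follows essentially the same route as the paper's own (very brief) sketch: a local trivialization of $q$ over charts of the Grassmannian product given by fixed complements/splittings, identification of the fibre of $r$ via a block decomposition of $M(a)$ and $N(a)$ relative to $\Ker\phi$ and $\Img\phi$, and then the dimension bookkeeping. Your version simply carries out in more detail the steps the paper leaves to the reader, and the block analysis and the arithmetic both check out.
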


\begin{proof}
(Sketch) For any $\iota\in\Hom(k^\gamma,k^\alpha)$ injective and $\pi\in\Hom(k^\beta,k^\gamma)$ surjective, define $V_{\iota\pi}=\{(M_1,N_1,\phi)\in\Hom(k^\alpha,k^\beta)_\gamma\mid \pi\phi\iota$ is isomorphism\} and $U_{\iota\pi}=\{(M_1,N_1)\in\Gr \binom{\alpha}{\alpha-\gamma}\times \Gr \binom{\beta}{\gamma}\mid k^\alpha=M_1+\Img(\iota), k^\beta=N_1+\Ker(\pi)\}$. Then \{$U_{\iota\pi}$\} is an open cover of $\Gr \binom{\alpha}{\alpha-\gamma}\times \Gr \binom{\beta}{\gamma}$ and  \{$V_{\iota\pi}$\} is an open cover of $\Hom(k^\alpha,k^\beta)_\gamma$ such that $V_{\iota\pi}=U_{\iota\pi}\times\GL_\gamma$. Moreover, one can show that $r$ is trivial over $V_{\iota\pi}$.
\end{proof}

Now we assume that $\innerprod{\alpha,\beta}_Q=0$. Let $C$ be the subscheme of $\Rep_\alpha(Q)\times\Rep_\beta(Q)$ defined by $c(M,N):=\det\phi_N^M$ \eqref{eq:canseq}, so its reduced part is $$C_{\red}=\{(M,N)\in\Rep_\alpha(Q)\times\Rep_\beta(Q)\mid\hom_Q(M,N)>0\}.$$
\begin{definition}
We call the (non-trivial) general rank $\gamma$ on any irreducible component of $C$ a {\em fundamental rank for} $(\alpha,\beta)$.
\end{definition}

\begin{lemma} \label{L:frank} All the irreducible components of $C$ are parameterized by the fundamental ranks. We denote by $C_\gamma$ the component with fundamental rank $\gamma$, then
\begin{equation} \label{eq:dimhomC} \ext_Q(C_\gamma)=\hom_Q(C_\gamma)=1-\innerprod{\alpha-\gamma,\beta-\gamma}. \end{equation}
\end{lemma}

\begin{proof}
Let $C_0$ be irreducible component of $C_{\red}$ and $\gamma$ be the general rank on $C_0$. Consider the projection $p:\Hom_Q(\alpha,\beta)_\gamma\to \Rep_\alpha(Q)\times\Rep_\beta(Q)$. $\overline{p(\Hom_Q(\alpha,\beta)_\gamma)}$ is irreducible and must equal to $C_0$.

If $(M,N)\in C_\gamma$ is in general position, then a general element in $\Hom_Q(M,N)$ has rank $\gamma$. But the fibre $p^{-1}(M,N)$ is $\{\phi\in\Hom_Q(M,N)\mid \rank\phi=\gamma\}$, so
\begin{align*}
\ext_Q(C_\gamma)=\hom_Q(C_\gamma)& =\dim p^{-1}(M,N)\\
& =\dim(\Hom_Q(\alpha,\beta)_\gamma)-\dim(\Rep_\alpha(Q)\times\Rep_\beta(Q))\notag\\
& =1-\innerprod{\alpha-\gamma,\beta-\gamma}.
\end{align*}
\end{proof}

We denote the following three sets respectively by $\Rep_{\gamma\hookrightarrow\alpha}(Q),\Rep_{\alpha\twoheadrightarrow\gamma}(Q)$, and $\Rep_{\gamma\oplus\alpha-\gamma}(Q)$:
\begin{align*}
&\{M\in\Rep_\alpha(Q)\mid M\text{ has a $\gamma$-dimensional subrepresentation}\},\\
&\{M\in\Rep_\alpha(Q)\mid M\text{ has a $\gamma$-dimensional quotient representation}\},\\
&\{M\in\Rep_\alpha(Q)\mid M\text{ has a direct sum decomposition of dimension $\gamma$ and $\alpha-\gamma$}\},
\end{align*}
The following lemma is implied in the proof of \cite[Theorem 2.1, 3.3]{S2}.

\begin{lemma} \label{L:codimgen} $\Rep_{\alpha\twoheadrightarrow\gamma}(Q)$ is a closed irreducible subvariety of codimension equal to $\ext_Q(\alpha-\gamma,\gamma)$ in $\Rep_\alpha(Q)$; $\Rep_{\gamma\oplus\alpha-\gamma}(Q)$ is irreducible of codimension equal to $\ext_Q(\gamma,\alpha-\gamma)+\ext_Q(\alpha-\gamma,\gamma)$ in $\Rep_\alpha(Q)$.
\end{lemma}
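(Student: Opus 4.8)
The statement to prove is Lemma \ref{L:codimgen}: that $\Rep_{\gamma\hookrightarrow\alpha}(Q)$ is closed, irreducible, of codimension $\ext_Q(\gamma,\alpha-\gamma)$, and that $\Rep_{\gamma\oplus\alpha-\gamma}(Q)$ is irreducible of codimension $\ext_Q(\gamma,\alpha-\gamma)+\ext_Q(\alpha-\gamma,\gamma)$. The natural approach is to build, in the spirit of Schofield's incidence-variety argument (and parallel to the bundle constructions used earlier in this section), a smooth irreducible variety mapping onto $\Rep_{\gamma\hookrightarrow\alpha}(Q)$ and compute its dimension via the fiber dimension of that map. Concretely, I would form the incidence variety
$$\D{I}_{\gamma\hookrightarrow\alpha}:=\{(M,M_1)\in\Rep_\alpha(Q)\times\Gr\tbinom{\alpha}{\gamma}\mid M_1\text{ is a subrepresentation of }M\},$$
where $\Gr\binom{\alpha}{\gamma}$ denotes the product of Grassmannians $\prod_{v}\Gr\binom{\alpha(v)}{\gamma(v)}$, and consider the two projections $p:\D{I}_{\gamma\hookrightarrow\alpha}\to\Rep_\alpha(Q)$ and $q:\D{I}_{\gamma\hookrightarrow\alpha}\to\Gr\binom{\alpha}{\gamma}$.

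First I would analyze $q$. The Grassmannian product is smooth irreducible, and the fiber of $q$ over a fixed flag $k^\gamma\subset k^\alpha$ consists of all representation structures on $k^\alpha$ for which the chosen subspace is a subrepresentation; this is a linear subspace of $\Rep_\alpha(Q)$ of dimension $\dim\Rep_\gamma(Q)+\dim\Rep_{\alpha-\gamma}(Q)+\sum_{a\in Q_1}\gamma(ta)(\alpha-\gamma)(ha)$ (block upper-triangular maps). Since this dimension is independent of the flag, $q$ is a vector bundle, so $\D{I}_{\gamma\hookrightarrow\alpha}$ is smooth and irreducible of dimension $\dim\Gr\binom{\alpha}{\gamma}+\dim\Rep_\gamma(Q)+\dim\Rep_{\alpha-\gamma}(Q)+\sum_a\gamma(ta)(\alpha-\gamma)(ha)$. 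Next, $p$ is proper (the Grassmannian is projective) with image exactly $\Rep_{\gamma\hookrightarrow\alpha}(Q)$, which proves closedness; and irreducibility of $\Rep_{\gamma\hookrightarrow\alpha}(Q)$ follows since it is the image of an irreducible variety. For the codimension, over a general point of $\Rep_{\gamma\hookrightarrow\alpha}(Q)$ the fiber of $p$ is the variety of $\gamma$-dimensional subrepresentations of a general such $M$; I would argue this fiber is finite (a general representation admitting a $\gamma$-dimensional sub is, generically, an extension with no moduli of subobjects beyond finitely many — more precisely one shows the general fiber dimension is $0$ using that $\hom_Q(\gamma,\alpha-\gamma)$ can be taken minimal on the generic locus, so $\dim\D{I}=\dim\Rep_{\gamma\hookrightarrow\alpha}(Q)$). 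Then
$$\codim\Rep_{\gamma\hookrightarrow\alpha}(Q)=\dim\Rep_\alpha(Q)-\dim\D{I}_{\gamma\hookrightarrow\alpha},$$
and a direct bookkeeping with $\dim\Gr\binom{\alpha}{\gamma}=\sum_v\gamma(v)(\alpha-\gamma)(v)$, $\dim\Rep_\alpha(Q)=\sum_a\alpha(ta)\alpha(ha)$, and the definition $\ext_Q(\gamma,\alpha-\gamma)-\hom_Q(\gamma,\alpha-\gamma)=-\innerprod{\gamma,\alpha-\gamma}_Q$ collapses the expression to $\ext_Q(\gamma,\alpha-\gamma)$ (using that on the general point $\hom_Q(\gamma,\alpha-\gamma)$ equals the generic value, which matches the finiteness-of-fiber input).

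For the direct-sum statement, I would instead use the set $\Rep_{\gamma\oplus\alpha-\gamma}(Q)$ as the image of the map from the product $\Rep_\gamma(Q)\times\Rep_{\alpha-\gamma}(Q)$ up to the $\GL_\alpha$-action, or more cleanly observe that $M\in\Rep_{\gamma\oplus\alpha-\gamma}(Q)$ iff $M$ has a $\gamma$-dimensional subrepresentation $M_1$ \emph{together with} a complementary $(\alpha-\gamma)$-subrepresentation, i.e. $M\in\Rep_{\gamma\hookrightarrow\alpha}(Q)\cap\Rep_{\alpha-\gamma\hookrightarrow\alpha}(Q)$ with the two in general position. Building the incidence variety of pairs $(M,M_1,M_2)$ with $M=M_1\oplus M_2$ as representations, the same fiber-bundle analysis gives irreducibility, and the codimension count now loses \emph{both} the $\ext_Q(\gamma,\alpha-\gamma)$ and the $\ext_Q(\alpha-\gamma,\gamma)$ directions (the fiber over a general $M$ is the homogeneous space of such splittings, whose dimension is accounted for by $\hom_Q(\gamma,\alpha-\gamma)+\hom_Q(\alpha-\gamma,\gamma)$ minus the automorphism overlap), yielding the stated codimension $\ext_Q(\gamma,\alpha-\gamma)+\ext_Q(\alpha-\gamma,\gamma)$.

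The main obstacle I anticipate is the fiber-dimension computation for $p$ over the general point: showing that a general representation admitting a $\gamma$-dimensional subrepresentation has only finitely many such (so that $\dim\D{I}=\dim\Rep_{\gamma\hookrightarrow\alpha}(Q)$ rather than something larger), and correctly matching the generic values of $\hom_Q$ and $\ext_Q$ with those used in the dimension formula. This is exactly the delicate point in \cite[Theorem 2.1, 3.3]{S2}, and I would handle it by the standard device: over the open locus where $\hom_Q(M_1,M/M_1)$ achieves its minimum, the extension $0\to M_1\to M\to M/M_1\to 0$ is general in its $\Ext$-class, and a dimension count on the map (general extension) $\mapsto M$ shows the fibers are exactly the $\GL$-orbits of the splitting data, of the expected dimension; the codimension formula then follows by subtracting dimensions. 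Since the excerpt already provides the analogous bundle arguments for $\Hom_Q(\alpha,\beta)_\gamma$, I would model the write-up on those and cite \cite{S2} for the generic-subrepresentation input.
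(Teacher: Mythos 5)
First, note that the paper itself does not prove this lemma: it is stated with the remark that it ``is implied in the proof of \cite[Theorem 2.1,3.3]{S2}.'' Your incidence-variety setup is exactly Schofield's, and the parts of your argument establishing that $\D{I}_{\gamma\hookrightarrow\alpha}$ is smooth and irreducible of dimension $\dim\Rep_\alpha(Q)+\innerprod{\gamma,\alpha-\gamma}_Q$, and hence that $\Rep_{\gamma\hookrightarrow\alpha}(Q)$ is closed and irreducible, are correct. The problem is the fiber dimension of $p$. You claim the general fiber of $p$ over $\Rep_{\gamma\hookrightarrow\alpha}(Q)$ is finite, ``so $\dim\D{I}=\dim\Rep_{\gamma\hookrightarrow\alpha}(Q)$.'' That is false in general, and with a zero-dimensional fiber your bookkeeping would give codimension $-\innerprod{\gamma,\alpha-\gamma}_Q=\ext_Q(\gamma,\alpha-\gamma)-\hom_Q(\gamma,\alpha-\gamma)$, not $\ext_Q(\gamma,\alpha-\gamma)$. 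Concretely, take $Q=A_2$ ($1\to 2$), $\alpha=(2,1)$, $\gamma=(1,1)$: every $M$ has a $\B{P}^1$ of $(1,1)$-dimensional subrepresentations, so the general fiber of $p$ is $1$-dimensional, $\dim\D{I}=3$ while $\Rep_{\gamma\hookrightarrow\alpha}(Q)=\Rep_\alpha(Q)$ has dimension $2$; the codimension is $0=\ext_Q((1,1),(1,0))$ only because the fiber dimension equals $\hom_Q((1,1),(1,0))=1$. Your closing parenthetical (``$\hom_Q(\gamma,\alpha-\gamma)$ equals the generic value, which matches the finiteness-of-fiber input'') tries to have it both ways: either the fiber is finite and the answer is $\ext-\hom$, or the fiber is $\hom$-dimensional and the answer is $\ext$; only the latter is true.

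The correct key step, which is precisely the delicate point in \cite{S2}, is that for a general point $(M,M_1)\in\D{I}_{\gamma\hookrightarrow\alpha}$ the fiber $p^{-1}(M)$, i.e.\ the quiver Grassmannian of $\gamma$-dimensional subrepresentations of $M$, has dimension exactly $\hom_Q(\gamma,\alpha-\gamma)$ at $M_1$. The upper bound comes from the tangent space $\Hom_Q(M_1,M/M_1)$, whose dimension is the generic value since $(M_1,M/M_1)$ is a general point of $\Rep_\gamma(Q)\times\Rep_{\alpha-\gamma}(Q)$; the lower bound is not automatic (the naive equation count only gives $\ge\innerprod{\gamma,\alpha-\gamma}_Q$) and requires the argument of \cite[Lemma 3.2]{S2}. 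Once this is in place, $\codim=\dim\Rep_\alpha(Q)-\dim\D{I}+\hom_Q(\gamma,\alpha-\gamma)=-\innerprod{\gamma,\alpha-\gamma}_Q+\hom_Q(\gamma,\alpha-\gamma)=\ext_Q(\gamma,\alpha-\gamma)$ as desired. Your treatment of the direct-sum locus is closer to correct, since there you do account for the $\hom_Q(\gamma,\alpha-\gamma)+\hom_Q(\alpha-\gamma,\gamma)$ contribution through $\Aut(M_1\oplus M_2)$; the same care must be applied to the subrepresentation case.
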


\begin{corollary} \label{C:exc} If $\alpha\perp E$ ,then the following are equivalent \begin{enumerate}
\item[(1)] $E$ is right exceptional to $\alpha$;
\item[(2)] $\epsilon$ is a fundamental rank for $(\alpha,\epsilon)$;
\item[(3)] $\ext_Q(\alpha-\epsilon,\epsilon)=1$, or equivalently $\hom_Q(\alpha-\epsilon,\epsilon)=0$;
\end{enumerate}
In this case, the exceptional set $\mc{E}_\alpha$ is irreducible and $\hom_Q(\mc{E}_\alpha,E)=1$. Moreover, $\Rep_{\alpha\twoheadrightarrow\epsilon}(Q)=\overline{\mc{E}_\alpha}$ and $\Rep_{\epsilon\hookrightarrow\alpha}(Q)=\overline{\GL_\alpha\cdot\R_E(\mc{E}_\alpha)}$.
\end{corollary}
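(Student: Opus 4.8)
The plan is to normalize with the Euler form, prove the three equivalences along the cycle $(1)\Rightarrow(2)\Rightarrow(1)$, $(3)\Rightarrow(1)$, $(1)\Rightarrow(3)$, and then read off the remaining assertions from the description of $\overline{\D{E}_\alpha}$ obtained on the way. Since $E\perp\alpha$ we have $\innerprod{\epsilon,\alpha}_Q=0$, hence $\innerprod{\epsilon,\alpha-\epsilon}_Q=-\innerprod{\epsilon,\epsilon}_Q=-1$. This already gives the equivalence of the two forms of (3), and also that $\hom_Q(E,M)=\ext_Q(E,M)$ for every $M\in\Rep_\alpha(Q)$, so that $M\notin E^\perp$ iff $\hom_Q(E,M)\geqslant 1$; moreover $\D{E}_\alpha$ is an open subset of the hypersurface $\{c^E=0\}=\Rep_\alpha(Q)\setminus\Rep_\alpha(E^\perp)$, hence of codimension one once it is non-empty (Remark \ref{R:Ereg}). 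I will use throughout that $\epsilon$, being a real Schur root, forces $\Rep_{m\epsilon}(Q)$ to contain a dense $\GL_{m\epsilon}$-orbit, namely that of $E^m$ (compare its dimension with $-\innerprod{m\epsilon,m\epsilon}_Q=-m^2$); in particular $\hom_Q(E,L)=\hom_Q(\epsilon,\alpha-\epsilon)$ for a general $L\in\Rep_{\alpha-\epsilon}(Q)$.

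For $(1)\Rightarrow(2)$: any $M\in\D{E}_\alpha$ is $E$-regular, so by Remark \ref{R:Ereg} the universal homomorphism $E^h\to M$ is injective with $h\geqslant 1$; hence $E$ is a subrepresentation of $M$, so $(E,M)\in C$ and the inclusion is a homomorphism of rank $\epsilon$, the maximum possible. Since the fundamental rank of a component of $C$ is its generic and hence maximal rank, the component through $(E,M)$ has fundamental rank $\epsilon$. For $(2)\Rightarrow(1)$: if $\epsilon$ is a fundamental rank then \eqref{eq:dimhomC} gives $\hom_Q(C_\epsilon)=1-\innerprod{\epsilon-\epsilon,\alpha-\epsilon}_Q=1$, so a general $(M_1,N_1)\in C_\epsilon$ has $\Hom_Q(M_1,N_1)$ one-dimensional, spanned by an embedding $M_1\hookrightarrow N_1$; a nontrivial direct-sum decomposition of $M_1$ would produce two linearly independent homomorphisms $M_1\to N_1$, so $M_1$ is indecomposable of dimension $\epsilon$, hence $M_1\cong E$, and then $N_1$ is $E$-regular with $\hom_Q(E,N_1)=1$, i.e. $N_1\in\D{E}_\alpha$. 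For $(3)\Rightarrow(1)$: take a general $L\in\Rep_{\alpha-\epsilon}(Q)$, so $\hom_Q(E,L)=\hom_Q(\epsilon,\alpha-\epsilon)=0$ and $\ext_Q(E,L)=1$; the long exact sequence applied to the nonsplit extension $0\to E\to M\to L\to 0$ yields $\hom_Q(E,M)=1$, so $M$ is $E$-regular and not in $E^\perp$, that is $M\in\D{E}_\alpha$. For $(1)\Rightarrow(3)$: the argument for $(1)\Rightarrow(2)$ shows $\D{E}_\alpha\subseteq\Rep_{\epsilon\hookrightarrow\alpha}(Q)$; since $\D{E}_\alpha$ has codimension one while $\Rep_{\epsilon\hookrightarrow\alpha}(Q)$ is irreducible of codimension $\ext_Q(\epsilon,\alpha-\epsilon)=1+\hom_Q(\epsilon,\alpha-\epsilon)\geqslant 1$ by Lemma \ref{L:codimgen}, the inclusion forces $\ext_Q(\epsilon,\alpha-\epsilon)=1$ and $\overline{\D{E}_\alpha}=\Rep_{\epsilon\hookrightarrow\alpha}(Q)$.

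For the ``in this case'' assertions: $\overline{\D{E}_\alpha}=\Rep_{\epsilon\hookrightarrow\alpha}(Q)$ is irreducible by Lemma \ref{L:codimgen}, and since $\D{E}_\alpha$ is locally closed in $\Rep_\alpha(Q)$, hence open and dense in its closure, it is irreducible as well. For a general $M\in\D{E}_\alpha$ the distinguished $\epsilon$-dimensional subrepresentation is $\cong E$ and $M/E$ is a general representation of dimension $\alpha-\epsilon$ (by the structure of $\Rep_{\epsilon\hookrightarrow\alpha}(Q)$ in the proof of Lemma \ref{L:codimgen}), so the long exact sequence gives $\hom_Q(E,M)=1+\hom_Q(\epsilon,\alpha-\epsilon)=1$, i.e. $\hom_Q(E,\D{E}_\alpha)=1$. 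Moreover $\ext_Q(E,M/E)=1$, so the construction in Lemma \ref{L:OP} exhibits $\tilde{\pi}_E(M)$ as the universal extension $0\to M/E\to\tilde{\pi}_E(M)\to E\to 0$; thus $E$ is a quotient of $\tilde{\pi}_E(M)$, and in fact for every $M\in\D{E}_\alpha$ the module $\tilde{\pi}_E(M)$ surjects onto $E^e$ with $e=\ext_Q(E,M/E^h)\geqslant h\geqslant 1$, so $\GL_\alpha\cdot\tilde{\pi}_E(\D{E}_\alpha)\subseteq\Rep_{\alpha\twoheadrightarrow\epsilon}(Q)$. Conversely $\Rep_{\alpha\twoheadrightarrow\epsilon}(Q)=\Rep_{(\alpha-\epsilon)\hookrightarrow\alpha}(Q)$ is irreducible by Lemma \ref{L:codimgen}, and a general element $M'$ of it is a nonsplit extension of $E$ by a general $K\in\Rep_{\alpha-\epsilon}(Q)$, hence isomorphic to $\tilde{\pi}_E(M'')$ where $M''\in\D{E}_\alpha$ is the nonsplit extension of $K$ by $E$ produced in the $(3)\Rightarrow(1)$ step; so $\GL_\alpha\cdot\tilde{\pi}_E(\D{E}_\alpha)$ is dense in $\Rep_{\alpha\twoheadrightarrow\epsilon}(Q)$, and the two closed sets coincide.

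The delicate points are all of ``general representation'' type: one must know that $E^m$ is the general representation of dimension $m\epsilon$, that $\Rep_{\gamma\hookrightarrow\alpha}(Q)$ and its quotient analogue are irreducible of the stated codimension, and --- this is the crux --- that for a general element of $\Rep_{\epsilon\hookrightarrow\alpha}(Q)$ the quotient by the distinguished copy of $E$ is a general representation of dimension $\alpha-\epsilon$, since this is exactly what lets the long exact sequence compute $\hom_Q(E,-)$ on $\D{E}_\alpha$. The codimension-one statement for $\D{E}_\alpha$ is used lightly (via the hypersurface $\{c^E=0\}$); if one wished to avoid it, one would analyze the a priori reducible locus $\Rep_\alpha(Q)\setminus\Rep_\alpha(E^\perp)=\{M:\hom_Q(E,M)\geqslant 1\}$ directly, noting that for $m\geqslant 2$ the subvariety $\Rep_{m\epsilon\hookrightarrow\alpha}(Q)$ has codimension $\geqslant m^2$ and is properly contained in the irreducible $\Rep_{\epsilon\hookrightarrow\alpha}(Q)$, so it cannot be an irreducible component of that locus, which forces the relevant component to be $\Rep_{\epsilon\hookrightarrow\alpha}(Q)$ itself.
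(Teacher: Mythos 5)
Your proof is correct in substance and follows essentially the same route as the paper: identify $\overline{\D{E}_\alpha}$ with $\Rep_{\epsilon\hookrightarrow\alpha}(Q)$, use the codimension formula of Lemma \ref{L:codimgen} together with $\innerprod{\epsilon,\alpha-\epsilon}_Q=-1$, compute $\hom_Q(E,\D{E}_\alpha)$ from the exact sequence with $M/E$ general, and realize $\tilde{\pi}_E(M)$ as the universal extension to get the statement about $\Rep_{\alpha\twoheadrightarrow\epsilon}(Q)$. You are in fact more complete than the paper, which only argues $(1)\Rightarrow(2)$, $(1)\Leftrightarrow(3)$ and never explicitly closes the loop from $(2)$.

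The one step whose stated justification does not hold up is $(1)\Rightarrow(2)$. You write that since the embedding $E\hookrightarrow M$ has the maximal possible rank $\epsilon$, ``the component through $(E,M)$ has fundamental rank $\epsilon$.'' But the fundamental rank of a component is the rank at a \emph{general} point of that component, and the rank at a special point of a component can strictly exceed the fundamental rank (a component $C_\gamma$ may well contain pairs admitting an embedding even though the generic rank on it is only $\gamma$). So maximality of the rank at the particular pair $(E,M)$ does not by itself identify the fundamental rank of the ambient component; you would need to know that $(E,M)$ is a general point of that component. The repair is exactly the paper's argument, and you already have the ingredient: since $\GL_\epsilon E$ is dense in $\Rep_\epsilon(Q)$ and $\D{E}_\alpha$ has pure codimension one, $\overline{\GL_\epsilon E\times\D{E}_\alpha}$ has codimension one inside the hypersurface $C$, hence each of its irreducible components is a full component of $C$; on the dense subset $\GL_\epsilon E\times\D{E}_\alpha$ the generic homomorphism is injective, so that component has fundamental rank $\epsilon$. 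With this substitution the argument is complete.
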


\begin{proof} If $\mc{E}_\alpha$ is non-empty, then $\epsilon$ is clearly a general rank on $\mc{E}_\alpha\times \GL_\epsilon E$. But $\GL_\epsilon E$ is dense in $\Rep_\epsilon(Q)$, so $\mc{E}_\alpha\times \GL_\epsilon E$ has codimension one in $\Rep_\alpha(Q)\times \Rep_\epsilon(Q)$, and thus must be irreducible. Hence, $\epsilon$ is a fundamental rank for $(\alpha,\epsilon)$ and $\mc{E}_\alpha$ is irreducible. Now each representation on $\mc{E}_\alpha$ has a quotient representation of dimension $\epsilon$, so $\ext_Q(\alpha-\epsilon,\epsilon)=1$.
Conversely, if $\ext_Q(\alpha-\epsilon,\epsilon)=1$, then $\hom_Q(\alpha-\epsilon,\epsilon)=0$. Let $M$ be a general representation in $\Rep_{\alpha\twoheadrightarrow\epsilon}(Q)$. We can assume that the kernel $S$ of $M\twoheadrightarrow E$ is general in $\Rep_{\alpha-\epsilon}(Q)$.
We claim that $\Hom_Q(M,E)$ contains solely one epimorphism, otherwise $\hom_Q(S,E)=\hom_Q(M,E)-1>0$ contradicting $\hom_Q(\alpha-\epsilon,\epsilon)=0$. So this epimorphism is indeed the universal homomorphism.

The equality $\hom_Q(\mc{E}_\alpha,E)=1$ follows from the formula \eqref{eq:dimhomC}. $\Rep_{\alpha\twoheadrightarrow\epsilon}(Q)=\overline{\mc{E}_\alpha}$ is evident from Lemma \ref{L:codimgen}. So we can require the kernel of the universal homomorphism is in general position, and therefore $\GL_\alpha\cdot\R_E(\mc{E}_\alpha)$ is dense in $\Rep_{\epsilon\hookrightarrow\alpha}(Q)$.
\end{proof}

%In this way, the exceptional set $\mc{E}_\alpha$ has a natural scheme structure.

\begin{example} \label{Ex:0} It is possible that $E$ is right orthogonal to $\alpha$ with $\dim_Q E<\alpha$, but $E$ is not right exceptional to $\alpha$. Consider a reflected quiver $C_3^6:$ $$\Cthreesix{}{}{}$$ with $\alpha=(1,2,2)$, then the only representation right orthogonal to $\alpha$ is $E=(0,2,1)$. However, $\alpha-\epsilon=(1,0,1)$ and $\Hom_Q(S_3,E)=1$, so $\hom_Q(\alpha-\epsilon,\epsilon)=1$. Moreover, one can check that the only fundamental rank for $(\alpha,\epsilon)$ is $(0,1,1)\neq\epsilon$.
\end{example}

The proposition will be used in Section \ref{S:SC}.
\begin{proposition} \label{P:dimblowup} Suppose that for any dimension vector $\gamma<\alpha$, $\Rep_{\epsilon\hookrightarrow\alpha}(Q)\nsubseteq \overline{\Rep_{\gamma\oplus\alpha-\gamma}(Q)}$, then
$\hom_Q(\epsilon,\alpha-\epsilon)=0$, so $\ext_Q(\epsilon,\alpha-\epsilon)=-\innerprod{\alpha-\epsilon,\epsilon}_Q$.
\end{proposition}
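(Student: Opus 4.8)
The plan is to contrapose: I want to show that if $\hom_Q(\alpha-\epsilon,\epsilon)>0$, then there is a dimension vector $\gamma<\alpha$ with $\Rep_{\alpha\twoheadrightarrow\epsilon}(Q)\subseteq\overline{\Rep_{\gamma\oplus\alpha-\gamma}(Q)}$. By Corollary \ref{C:exc}, since $E$ is left exceptional to $\alpha$ (which is the standing hypothesis, as $\D{E}_\alpha$ appears in the statement via $\Rep_{\alpha\twoheadrightarrow\epsilon}(Q)$), we have $\Rep_{\alpha\twoheadrightarrow\epsilon}(Q)=\overline{\GL_\alpha\cdot\tilde\pi_E(\D{E}_\alpha)}$, and a general point of this variety is a representation $N$ fitting in a short exact sequence $0\to E\to M\to N\to 0$ with $M\in\D{E}_\alpha$ general; equivalently $N$ is a general representation of dimension $\alpha-\epsilon$ admitting no map back from... — more precisely $N=\tilde\pi_E(M)=M/E$ for $M$ general in $\Rep_{\epsilon\hookrightarrow\alpha}(Q)=\overline{\D{E}_\alpha}$, so by the argument in the proof of Corollary \ref{C:exc} we may take $N$ to be a completely general representation of $\Rep_{\alpha-\epsilon}(Q)$.

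So the task reduces to: if $N\in\Rep_{\alpha-\epsilon}(Q)$ is general and $\hom_Q(\alpha-\epsilon,\epsilon)>0$, produce a direct-sum decomposition of $N$ (or a limit of such) of some type $\gamma\dot+(\alpha-\gamma)$. The idea is to take a general nonzero homomorphism $g\colon N\to E'$, where $E'$ is a general representation of dimension $\epsilon$, i.e.\ an exceptional module isomorphic to $E$. Let $\gamma'=\dim\Img(g)$; then $N$ surjects onto a $\gamma'$-dimensional subrepresentation of the exceptional module $E$. Since $E$ is a \emph{brick} ($\Hom_Q(E,E)=k$) that is exceptional, its subrepresentations and the rank behavior of maps into it are rigid; I expect that for general $N$ the image $\Img(g)$ is itself a rigid (exceptional or at least Ext-rigid) representation $E''$ of dimension $\gamma'$, and that $\Ext_Q(E'',N')=0$ where $N'=\Ker(g)$ of dimension $\alpha-\epsilon-\gamma'$. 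If so, the surjection $N\twoheadrightarrow E''$ together with an argument that $\Ext_Q(E'',\Ker g)=0$ splits the sequence $0\to\Ker g\to N\to E''\to 0$, exhibiting $N\cong \Ker g\oplus E''$, hence $N\in\Rep_{\gamma\oplus(\alpha-\epsilon-\gamma)}(Q)$ for $\gamma=\gamma'$. Then $\Rep_{\alpha\twoheadrightarrow\epsilon}(Q)$, whose general point is of the form $0\to E\to M\to N\to 0$, lies in $\overline{\Rep_{\gamma\oplus\alpha-\gamma}(Q)}$ for the corresponding $\gamma$ (absorbing either the $E$ or the $E''$ summand appropriately into the decomposition of $M$); one has to be a little careful about which $\gamma<\alpha$ one finally names, but the extension by $E$ only enlarges the dimension vector, keeping $\gamma<\alpha$. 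This contradicts the hypothesis, so $\hom_Q(\alpha-\epsilon,\epsilon)=0$, and then $\ext_Q(\alpha-\epsilon,\epsilon)=-\innerprod{\alpha-\epsilon,\epsilon}_Q$ is immediate from \eqref{eq:canseq}.

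The main obstacle is the splitting step: showing that for a general $N$ of dimension $\alpha-\epsilon$, a general nonzero map $N\to E'$ has image a rigid module $E''$ with $\Ext_Q(E'',\Ker g)=0$. This is where the exceptionality of $E$ (hence of its subrepresentations, via the standard fact that subfactors of exceptional modules in $E^\perp$-type situations are controlled) and the genericity of $N$ must be combined; one likely invokes the canonical decomposition of $\alpha-\epsilon$ and the fact that $\hom_Q(\alpha-\epsilon,\epsilon)>0$ forces a common summand or a rigid quotient factoring through $E$. I would also double-check the edge case $\gamma'=\alpha-\epsilon$ (i.e.\ $g$ itself injective), in which $N$ embeds in $E$; since $N$ is general of dimension $\alpha-\epsilon<\epsilon$... actually $\dim N=\alpha-\epsilon$ need not be $<\epsilon$, so this case genuinely needs the rigidity of submodules of $E$ to conclude $N$ is itself rigid and the sequence $0\to E\to M\to N\to 0$ splits, again placing $\Rep_{\alpha\twoheadrightarrow\epsilon}(Q)$ inside $\overline{\Rep_{\epsilon\oplus(\alpha-\epsilon)}(Q)}$ with $\gamma=\epsilon<\alpha$, the contradiction. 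Once the splitting is in hand, the rest is bookkeeping with the earlier lemmas.
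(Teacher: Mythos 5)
Your overall strategy---argue by contradiction from $\hom_Q(\alpha-\epsilon,\epsilon)>0$, look at the generic rank $\gamma'$ of a general map $g$ from a general $L\in\Rep_{\alpha-\epsilon}(Q)$ to $E$, and produce a decomposition contradicting the hypothesis---is the same as the paper's, but the step you yourself flag as the main obstacle is a genuine gap, and the statement you want there is false in general. You try to split $L$ itself as $\Ker g\oplus\Img g$. Genericity of the rank gives $\ext_Q(\dim\Ker g,\dim\Img g)=0$ (the kernel is a generic subrepresentation, Lemma \ref{L:codimgen}), but splitting the sequence $0\to\Ker g\to L\to\Img g\to 0$ requires the vanishing in the \emph{other} direction, $\Ext_Q(\Img g,\Ker g)=0$, and nothing forces that; indeed, whenever $\alpha-\epsilon$ is a Schur root a general $L$ is indecomposable and cannot equal $\Ker g\oplus\Img g$ with both factors nonzero. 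The paper sidesteps this entirely: writing $K,I,C$ for kernel, image, cokernel of $g$, it uses the surjection $\Ext_Q(C,L)\twoheadrightarrow\Ext_Q(C,I)$ to build a pullback/pushout diagram producing an exact sequence $0\to L\to I\oplus M\to E\to 0$ whose middle term is decomposable \emph{by construction}, with summands of dimension $\gamma'$ and $\alpha-\gamma'$. Since this sequence is non-split (Krull--Schmidt) and $\ext_Q(\epsilon,\alpha-\epsilon)=1$ makes $\Ext_Q(E,L)$ one-dimensional, it realizes the general point of $\Rep_{\alpha\twoheadrightarrow\epsilon}(Q)$---which, note, is an extension $0\to L\to V\to E\to 0$ with $E$ as \emph{quotient}, not as subrepresentation as you wrote---and that point is decomposable, contradicting the hypothesis. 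This also eliminates your ``absorbing the summand'' bookkeeping, which as stated is not justified.

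Two further points. The boundary cases need a genuinely separate argument, not an appeal to rigidity of submodules of $E$: when $\gamma'=\alpha-\epsilon$ ($g$ injective) the paper applies $\Hom_Q(-,E)$ to $0\to L\to E\to C\to 0$ and uses $\Ext_Q(E,E)=0$ to get $\ext_Q(\alpha-\epsilon,\epsilon)=0$; when $\gamma'=\epsilon$ ($g$ surjective) it first extracts an $(\alpha-2\epsilon)$-dimensional generic subrepresentation $K$ and then applies $\Hom_Q(-,E)$ to $0\to K\to K\oplus E\to E\to 0$; in both cases $\ext_Q(\alpha-\epsilon,\epsilon)=0$ contradicts the hypothesis via Lemma \ref{L:codimgen}. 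Your final deduction of $\ext_Q(\alpha-\epsilon,\epsilon)=-\innerprod{\alpha-\epsilon,\epsilon}_Q$ from $\hom_Q(\alpha-\epsilon,\epsilon)=0$ via \eqref{eq:canseq} is fine.
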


\begin{proof} The proof is just a slight variation of that of \cite[Theorem 4.1]{S1}.
Suppose that $\hom_Q(\epsilon,\alpha-\epsilon)>0$, then let $\gamma$ be the general rank on $\Rep_{\epsilon}(Q)\times\Rep_{\alpha-\epsilon}(Q)$. We claim that either $\gamma=\epsilon$ or $\gamma=\alpha-\epsilon$. Suppose this is not the case. We take a general representations $N\in\Rep_{\alpha-\epsilon}(Q)$ with a general morphism $f$ from $E$ to $N$. Let $K,I,C$ be the kernel, image, and cokernel of $f$. Note that $K$ and $C$ can not both be zeros, otherwise $\alpha=2\epsilon$. Since $\ext_Q(\epsilon,\epsilon)=0$, this will contradicts our assumption by Lemma \ref{L:codimgen}. We assume that $K$ is nonzero, otherwise the proof goes similarly. We get an induced surjection $\Ext_Q(N,K)\twoheadrightarrow\Ext_Q(I,K)$, which corresponds to the following exact diagram of extensions:
$$\xymatrix @C=2.5pc {
0 \ar[r] & K \ar[r]^{} \ar@{=}[d] &  E \ar[r]^{\pi'} \ar[d]^{\iota'} & I \ar[r]  \ar@{_{(}->}[d]^{\iota} & 0 \\
0 \ar[r] & K \ar[r]^{} & M \ar[r]^{\pi} & N \ar[r]  & 0, \\
}$$ from which we can construct an exact sequence of representations
$$\xymatrix @C=2.5pc {
0 \ar[r] & E \ar[r]^{(\pi'\ -\iota')} &  I\oplus M \ar[r]^{\sm{\iota\\ \pi}} & N \ar[r] & 0.}$$
This exact sequence cannot split, otherwise $I\oplus M=E\oplus N$ contradicts the uniqueness of decomposition. So the sequence corresponds to a general extension in $\Ext_Q(N,E)$. This is because $\ext_Q(\alpha-\epsilon,\epsilon)=1$ and $N,E$ are general. Then a general representation in $\Rep_{\alpha\hookrightarrow\epsilon}(Q)$ has a decomposition of dimension $\gamma$ and $\alpha-\gamma$, which contradicts our assumption.
\end{proof}

\section{A Functorial Construction} \label{S:FC}

Let $\alpha$ be a non-isotropic imaginary Schur root and $\sigma_\beta=\innerprod{\beta,-}_Q$ be a weight. We assume that $\Rep_\alpha(Q)$ contains a $\sigma_\beta$-stable representation.
%Since the dimension of the quotient is at least two, the codimension of the $\sigma_\beta$-unstable points in $\Rep_\alpha(Q)$ is also at least two. So $\mc{E}_\alpha$ must contain a $\sigma_\beta$-semi-stable point if it is non-empty.
%its open subset $\{M\in\Rep_\alpha(Q)\mid M\perp N,\exists N\in\Rep_\beta(Q)\}$ by $\Rep_\alpha^\beta(Q)$. A weight $\sigma_\beta$ is called very ample if $\Rep_\alpha\ss{\beta}{ss}(Q)=\Rep_\alpha^\beta(Q)$.
Let $E$ be an exceptional representation with $\dim_Q E=\epsilon$ and we assume it is right orthogonal to $\alpha$. Note that $\Rep_\alpha(\Perp E)=\Rep_\alpha\ss{\tau^{-1}\epsilon}{ss}(Q)$ because $\GL_\epsilon E$ is dense in $\Rep_\epsilon(Q)$.
%so $\Rep_\alpha(Q)\setminus E^\perp=\Rep_\alpha\ss{\tau\epsilon}{un}(Q)=(C_{\tau E})_{\red}$.
We denote by $\Rep_\alpha\ss{\beta}{ss}(Q)\cap\Perp E$ (resp. $\Rep_\alpha\ss{\beta}{st}(Q)\cap\Perp E$) by $\Rep_\alpha\ss{\beta}{ss}(\Perp E)$ (resp. $\Rep_\alpha\ss{\beta}{st}(\Perp E$)).
It follows from Lemma \ref{L:proj_ss} that $\alpha_\epsilon$ is $\sigma_{\beta_E^\vee}$-semi-stable.

\begin{definition} A representation $M\in\Rep_\alpha\ss{\beta}{ss}(Q)$ is called {\em $E$-effective} if $\R_{Q_E}(M)$ is $\sigma_{\beta_E^\vee}$-semi-stable. Clearly, being $E$-effective is an open condition.
We denote the set of all $E$-effective $\sigma_\beta$-semi-stable $\alpha$-dimensional representations by $\Rep_\alpha\ss{\beta_E^\vee}{ss}(Q)$ and all $E$-effective $\sigma_\beta$-unstable representations in $\Rep_\alpha(\Perp E)$ by $\Rep_\alpha\ss{\beta_E^\vee}{us}(\Perp E)$.
\end{definition}

\begin{lemma} \label{L:pi_E_ss}
If $M\in \Rep_\alpha(\Perp E)$ is $\sigma_\beta$-$($semi-$)$stable, then $\R_{Q_E}(M)$ is $\sigma_{\beta_E^\vee}$-$($semi-$)$stable. In particular, $\alpha_\epsilon$ is $\sigma_{\beta_E^\vee}$-stable.
\end{lemma}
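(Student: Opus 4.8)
The plan is to use King's criterion (Lemma \ref{L:King}) directly, transferring the subobject-test for $\pi_E(M)$ in $\Mod(Q_E)$ back to a subobject-test for $M$ in $\Mod(Q)$. Let $M\in\Rep_\alpha(E^\perp)$ be $\sigma_\beta$-semi-stable. Since $M\in E^\perp$, we may identify $M$ with $\iota_E(\pi_E(M))$, so that subrepresentations of $\pi_E(M)$ inside $\Mod(Q_E)$ correspond, via the full exact embedding $\iota_E$ of Corollary \ref{C:OC}, to subrepresentations $L'$ of $M$ that lie in $E^\perp$. For such an $L'$, write $\lambda=\dim_Q(L')$; then $\dim_{Q_E}(\pi_E(L'))=\lambda_\epsilon=\tilde{\pi}_\epsilon(\lambda)$ under the isometry. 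I would then compute, using the defining property $\innerprod{\beta,\alpha}=\innerprod{\beta,\pi_\epsilon^\vee\alpha}$ of the dual projection (recorded right after the definition of $\pi_\epsilon^\vee$) together with the fact that the equivalence $\iota_E$ is an isometry for the Euler forms, that
\begin{equation*}
\sigma_{\beta_E^\vee}\bigl(\dim_{Q_E}(\pi_E(L'))\bigr)=-\innerprod{\lambda_\epsilon,\beta_E^\vee}_{Q_E}=-\innerprod{\lambda,\pi_\epsilon^\vee(\beta)}_{Q}=-\innerprod{\lambda,\beta}_{Q}=\sigma_\beta(\lambda).
\end{equation*}
So the weight of any $Q_E$-subrepresentation of $\pi_E(M)$ equals the $\sigma_\beta$-weight of the corresponding $Q$-subrepresentation of $M$.

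Next I would invoke King: since $M$ is $\sigma_\beta$-semi-stable, $\sigma_\beta(\lambda)\leqslant 0$ for every subrepresentation, in particular for those $L'$ lying in $E^\perp$; combined with the displayed identity this gives $\sigma_{\beta_E^\vee}(\dim_{Q_E} L)\leqslant 0$ for every subrepresentation $L$ of $\pi_E(M)$ in $\Mod(Q_E)$, hence $\pi_E(M)$ is $\sigma_{\beta_E^\vee}$-semi-stable by Lemma \ref{L:King}. For the stable case, the same computation gives strict inequality $\sigma_{\beta_E^\vee}(\dim_{Q_E} L)<0$ for every proper nonzero $L$, so $\pi_E(M)$ is $\sigma_{\beta_E^\vee}$-stable. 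Finally, the last assertion follows by taking $M$ a general (hence $\sigma_\beta$-stable, since by hypothesis $\Rep_\alpha(Q)$ contains such a representation and being in $E^\perp$ is the dense open condition $\Rep_\alpha\ss{\tau\epsilon}{ss}(Q)$) representation in $\Rep_\alpha(E^\perp)$ — the intersection of the dense open $\sigma_\beta$-stable locus with the dense open $E^\perp$-locus is nonempty — and applying the stable case; this shows $\alpha_\epsilon$ is $\sigma_{\beta_E^\vee}$-stable.

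The one genuine point to be careful about is that a subrepresentation of $\pi_E(M)$ in $\Mod(Q_E)$ need not, a priori, come from a subrepresentation of $M$ inside the subcategory $E^\perp$ — but since $\iota_E$ is a full exact embedding (Corollary \ref{C:OC}) and $M=\iota_E(\pi_E(M))$, the lattice of subobjects of $\pi_E(M)$ in $\Mod(Q_E)$ maps isomorphically onto the lattice of subobjects of $M$ that lie in $E^\perp$, and this is exactly the subfamily over which King's inequality is being applied; no subobjects are lost because the image of $\iota_E$ is an abelian subcategory. So the main (and only mildly delicate) step is the bookkeeping identity relating $\sigma_{\beta_E^\vee}$ on $Q_E$ to $\sigma_\beta$ on $Q$ via the isometry $\iota_E$ and the dual orthogonal projection $\pi_\epsilon^\vee$; everything else is a direct appeal to King's criterion and to the density of the relevant open loci.
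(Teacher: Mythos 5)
Your reduction to King's criterion via the correspondence between subobjects of $\pi_E(M)$ in $\Mod(Q_E)$ and subobjects of $M$ lying in $E^\perp$ is sound (as is your treatment of the final assertion about $\alpha_\epsilon$), but there is a genuine gap in the second equality of your displayed computation: you silently replace $\beta_E^\vee$ by $\pi_\epsilon^\vee(\beta)=\beta_\epsilon^\vee$. In this paper these are different objects: $\beta_E^\vee$ is the dimension vector of the actual module $\pi_E^\vee(N)$ for a general $N\in\Rep_\beta(Q)$, while $\beta_\epsilon^\vee$ is the lattice-theoretic projection, and they coincide precisely when $\sigma_\beta$ is $E^\vee$-regular (i.e.\ the universal homomorphism $N\to\tau E^h$ is surjective) --- a hypothesis that is introduced only later in the section and is not assumed in this lemma. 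Without it, the identity $\sigma_{\beta_E^\vee}(\lambda_\epsilon)=-\innerprod{\lambda,\pi_\epsilon^\vee(\beta)}_Q$ fails, so as written your argument proves the lemma only for $E^\vee$-regular weights.

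The repair, which is essentially what the paper's own proof does (phrased there with quotient representations rather than subrepresentations), is to replace your equality by an inequality pointing in the direction you need. Write $\beta_E^\vee=\dim_{Q_E}\pi_E^\vee(N)$ and use $\innerprod{\dim X,\dim Y}=\dim\Hom(X,Y)-\dim\Ext(X,Y)$: the adjunction $\Hom_{Q_E}(L,\pi_E^\vee(N))\cong\Hom_Q(\iota_E(L),N)$ together with the inclusion $\Ext_{Q_E}(L,\pi_E^\vee(N))\subseteq\Ext_Q(\iota_E(L),N)$ supplied by the dual of Lemma \ref{L:proj_ss} yields $\sigma_{\beta_E^\vee}(\dim_{Q_E}L)\leqslant\sigma_\beta(\dim_Q\iota_E(L))$ for every subrepresentation $L$ of $\pi_E(M)$ in $\Mod(Q_E)$; the right-hand side is $\leqslant 0$ (resp.\ $<0$) by King's criterion applied to $M$, which is all you need for both the semi-stable and the stable case. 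So the missing ingredient is precisely Lemma \ref{L:proj_ss}$^*$; the rest of your argument stands.
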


\begin{proof}
Let $M_1\in\Mod(Q_E)$ be a quotient representation of $\R_{Q_E}(M)$, then we have that $\sigma_{\beta_E^\vee}(\dim_{Q_E}M_1)\geqslant\sigma_\beta(\dim_Q\iota_E(M_1))$ by Lemma \ref{L:proj_ss}. Since $\iota_E$ is an exact embedding, $\iota_E(M_1)$ is also a quotient representation of  $\iota_E\R_{Q_E}(M)$. But $\iota_E\R_{Q_E}(M)=M$ because $M\in\Perp E$. Now everything follows from King's criterion (Lemma \ref{L:King}).
\end{proof}

\begin{remark} In general, $\sigma_\beta$-unstable points in $\Rep_\alpha(\Perp E)$ may become $\sigma_{\beta_E^\vee}$-semi-stable under $\R_{Q_E}$. This is equivalent to say that $\sigma_{\beta_E^\vee}$-semi-stable points may become unstable under $\iota_E$. The following lemma is an easy consequence of King's criterion.
\end{remark}

\begin{lemma}
A $\sigma_\beta$-unstable representation $M\in \Rep_\alpha(\Perp E)$ is $E$-effective
if and only if there is no subrepresentation $($resp. quotient$)$ $M_1$ of $M$ such that $M_1\in\Perp E$ and $\sigma_\beta(\dim_QM_1)>0$ $($resp. $\sigma_\beta(\dim_QM_1)<0)$.
\end{lemma}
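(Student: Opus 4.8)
The plan is to reduce everything to King's criterion (Lemma \ref{L:King}) applied on both sides of the orthogonal projection, translating the $\sigma_{\beta_E^\vee}$-(semi)stability of $\pi_E(M)$ in $\Mod(Q_E)$ into a statement about subrepresentations of $M$ in $E^\perp$. First I would recall that, by definition, $M\in\Rep_\alpha(E^\perp)$ is $E$-effective precisely when $\pi_E(M)\in\Mod_{\alpha_\epsilon}\ss{\beta_E^\vee}{ss}$, so by King's criterion this means that every nontrivial subrepresentation $N_1$ of $\pi_E(M)$ in $\Mod(Q_E)$ satisfies $\sigma_{\beta_E^\vee}(\dim_{Q_E} N_1)\leqslant 0$. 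Since $\iota_E$ is a full exact embedding (Corollary \ref{C:OC}) and $\iota_E\pi_E(M)=M$ because $M\in E^\perp$, the subrepresentations of $\pi_E(M)$ in $\Mod(Q_E)$ correspond bijectively, via $\iota_E$, to the subrepresentations $M_1$ of $M$ that lie in $E^\perp$; and the same correspondence holds for quotient representations.

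Next I would compare the two weight pairings across the projection. The key input is the dual of Lemma \ref{L:proj_ss}: for $M_1\in E^\perp$ one has $\Ext_{Q_E}(-,\pi_E^\vee(N))\subseteq\Ext_Q(-,N)$ on $E^\perp$, which at the level of dimension vectors gives the relation between $\sigma_\beta$ and $\sigma_{\beta_E^\vee}$ that was already used in the proof of Lemma \ref{L:pi_E_ss}, namely $\sigma_{\beta_E^\vee}(\dim_{Q_E}\pi_E(M_1))$ and $\sigma_\beta(\dim_Q M_1)$ agree when $M_1\in E^\perp$ (here $M_1=\iota_E(N_1)$, so $\pi_E(M_1)=N_1$). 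Thus the condition "every subrepresentation $N_1$ of $\pi_E(M)$ has $\sigma_{\beta_E^\vee}(\dim_{Q_E}N_1)\leqslant 0$" becomes "every subrepresentation $M_1$ of $M$ with $M_1\in E^\perp$ has $\sigma_\beta(\dim_Q M_1)\leqslant 0$".

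Now I would bring in the hypothesis that $M$ is $\sigma_\beta$-unstable, and split the analysis into the subrepresentation side and the quotient side. By King's criterion, $M$ being unstable means there is some subrepresentation $L$ with $\sigma_\beta(\dim_Q L)>0$; but such an $L$ need not lie in $E^\perp$, which is exactly why $M$ can still be $E$-effective. The point is that $E$-effectivity, as just reformulated, says precisely that no such $L$ can be chosen inside $E^\perp$: equivalently (passing to complements/kernels), no quotient $M_1\in E^\perp$ has $\sigma_\beta(\dim_Q M_1)<0$, using that $\sigma_\beta(\dim_Q M)=\sigma_\beta(\alpha)$ is fixed and $\sigma_\beta(\dim_Q M_1)+\sigma_\beta(\dim_Q M/M_1)=\sigma_\beta(\alpha)$. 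I should be slightly careful here about whether $\sigma_\beta(\alpha)=0$ or not; in the relevant setup $\sigma_\beta$ is a $G$-ample weight for $\alpha$, so $\sigma_\beta(\alpha)=0$ and the sub/quotient dichotomy is exact, giving both halves of the stated "resp." simultaneously. Assembling these equivalences yields: $M$ is $E$-effective iff there is no subrepresentation $M_1$ of $M$ with $M_1\in E^\perp$ and $\sigma_\beta(\dim_Q M_1)>0$, iff there is no quotient $M_1$ of $M$ with $M_1\in E^\perp$ and $\sigma_\beta(\dim_Q M_1)<0$.

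The main obstacle I anticipate is bookkeeping rather than conceptual: making sure the correspondence between subobjects of $\pi_E(M)$ in $\Mod(Q_E)$ and subobjects of $M$ lying in $E^\perp$ is stated correctly (it relies on $E^\perp$ being an abelian subcategory closed under sub/quotients in $\Mod(Q)$ together with $\iota_E$ being exact and fully faithful), and verifying that the weight identity $\sigma_{\beta_E^\vee}\circ\dim_{Q_E}\circ\pi_E = \sigma_\beta\circ\dim_Q$ holds on the nose on $E^\perp$-subobjects, not merely up to an inequality. Once those two are pinned down, the proof is a direct application of King's criterion on both sides and the elementary additivity of $\sigma_\beta$ along short exact sequences.
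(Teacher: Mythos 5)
The paper offers no written proof of this lemma (it is introduced only as ``an easy consequence of King's criterion''), and your route --- King's criterion on both sides of the projection, the correspondence between subobjects of $\pi_E(M)$ in $\Mod(Q_E)$ and subobjects $M_1\subseteq M$ lying in $E^\perp$, and the sub/quotient duality via $\sigma_\beta(\alpha)=0$ --- is certainly the intended one. The subobject correspondence is fine, though for the record what it uses is not that $E^\perp$ is closed under subobjects or quotients taken in $\Mod(Q)$ (it is not), but that the inclusion $E^\perp\to\Mod(Q)$ is fully faithful and exact and that $E^\perp$ is closed under kernels and cokernels of its own morphisms; this is also what makes the ``resp.''\ clause work, since the kernel of $M\twoheadrightarrow M_2$ with $M,M_2\in E^\perp$ again lies in $E^\perp$.

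The step you flag but do not actually close is the genuine issue: the on-the-nose identity $\sigma_{\beta_E^\vee}(\dim_{Q_E}N_1)=\sigma_\beta(\dim_Q\iota_E(N_1))$. The dual of Lemma \ref{L:proj_ss} gives only an inclusion $\Ext_{Q_E}(M_1,\pi_E^\vee(N))\subseteq\Ext_Q(M_1,N)$ together with the adjunction isomorphism on $\Hom$; since $\sigma_\beta(\dim_QM_1)=\ext_Q(M_1,N)-\hom_Q(M_1,N)$ for $N$ of dimension $\beta$, this yields only the one-sided inequality $\sigma_{\beta_E^\vee}(\dim_{Q_E}M_1)\leqslant\sigma_\beta(\dim_QM_1)$ for $M_1\in E^\perp$, with equality when the universal homomorphism $N\to(\tau E)^h$ is surjective for general $N\in\Rep_\beta(Q)$, i.e.\ when $\sigma_\beta$ is $E^\vee$-regular (equivalently $\beta_E^\vee=\beta_\epsilon^\vee$, in which case the identity is just the lattice formula $\innerprod{\gamma,\beta}_Q=\innerprod{\gamma,\pi_\epsilon^\vee\beta}_Q$ for $\gamma\in\epsilon^\perp$ from Section \ref{S:OP}). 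The inequality alone proves that the absence of a subrepresentation $M_1\in E^\perp$ with $\sigma_\beta(\dim_QM_1)>0$ forces $\pi_E(M)$ to be $\sigma_{\beta_E^\vee}$-semi-stable, but not the converse: a subrepresentation with $\sigma_\beta(\dim_QM_1)>0$ could a priori satisfy $\sigma_{\beta_E^\vee}(\dim_{Q_E}M_1)\leqslant 0$. So you should either invoke the $E^\vee$-regularity hypothesis for the ``only if'' direction or give a separate argument there. This imprecision is arguably inherited from the paper, whose own proof of Lemma \ref{L:pi_E_ss} asserts the opposite inequality without comment, but as written the forward implication in your proof has a hole.
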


Now we turn to the inverse functor $\iota_E$. Let $p$ be the projection: $$\Rep_\alpha(\Perp E)\cong\GL_\alpha\times_{\GL_{\alpha_\epsilon}}\Rep_{\alpha_\epsilon}(Q_E)\to\Rep_{\alpha_\epsilon}(Q_E)$$ and $\Rep_{\alpha_\epsilon}\ss{\beta}{ss}(Q_E)$ be the open set $p(\Rep_\alpha\ss{\beta}{ss}(\Perp E))$ in $\Rep_{\alpha_\epsilon}(Q_E)$. By definition, it consists exactly of all representations $M$ such that $\iota_E(M)$ is $\sigma_\beta$-semi-stable. We also denote $p(\Rep_\alpha\ss{\beta}{st}(\Perp E))$ and $p(\Rep_\alpha\ss{\beta_E^\vee}{us}(\Perp E))$ by $\Rep_{\alpha_\epsilon}\ss{\beta}{st}(Q_E)$ and $\Rep_{\alpha_\epsilon}\ss{\beta}{su}(Q_E)$ respectively. Note that $\Rep_{\alpha_\epsilon}\ss{\beta}{su}(Q_E)$ consists exactly all representations $M\in\Rep_{\alpha_\epsilon}\ss{\beta_E^\vee}{ss}(Q_E)$ such that $\iota_E(M)$ is $\sigma_\beta$-unstable, so it is closed in $\Rep_{\alpha_\epsilon}\ss{\beta_E^\vee}{ss}(Q_E)$.

We give a useful criterion on when $\Rep_{\alpha_\epsilon}\ss{\beta}{ss}(Q_E)=\Rep_{\alpha_\epsilon}\ss{\beta_E^\vee}{ss}(Q_E)$, i.e., $\iota_E$ preserves (semi-)stable points. At certain points below, we may assume that $\sigma_\beta$ is a {\em $E^\vee$-regular weight}, that is, $\beta_E^\vee=\beta_\epsilon^\vee$. Equivalently, for a general $N\in\Rep_{\beta}(Q)$, the universal homomorphism $\tau^{-1}(hE)\to N$ is injective. Thus $(n\beta)_E^\vee=n\beta_\epsilon^\vee$ holds for all $n\in\mb{N}$.

\begin{remark} We want to point out this is really not a big assumption.
%First, generically $\beta$ is $\sigma_\alpha^\vee$-stable, where $\sigma_\alpha^\vee=\sigma_\alpha\tau^{-1}=\innerprod{\alpha,-}$, so $h=0$. In addition,
As an easy consequence of Generalized Fulton's Conjecture \cite[Theorem 7.16]{DW2}, $\alpha$ is strictly $\sigma_{\tau^{-1}\epsilon}$-semi-stable, so $\tau^{-1}\epsilon$ must lie on the boundary of the cone $\Sigma_\alpha(Q)$. In fact, in many cases it is an extremal ray of $\Sigma_\alpha(Q)$, then $\tau^{-1}(hE)$ must inject into $N$.
\end{remark}

\begin{proposition} \label{P:sspreserve} Assume that $\sigma_\beta$ is a $E^\vee$-regular weight, then
$\GL_{\beta_\epsilon^\vee}\cdot\R_{Q_E}^\vee(\Rep_\beta(Q))$ contains a dense subset of $\Rep_{\beta_\epsilon^\vee}(Q_E)$ if and only if \begin{enumerate}
\item[(i)] $\innerprod{\beta,\epsilon}_Q\leqslant 0$, or
\item[(ii)] $\innerprod{\beta,\epsilon}_Q>0$ and $\ext_Q(\beta,\tau^{-1}\epsilon)=0$.
\end{enumerate}
In these cases, $\iota_E$ preserves semi-stable points.
\end{proposition}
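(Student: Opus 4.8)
The plan is to analyze the image of $\Rep_\beta(Q)$ under $\tilde\pi_E^\vee$ (equivalently $\pi_E^\vee$) by tracking the universal homomorphism in the dual of the construction of Lemma \ref{L:OP}. Since $E^\vee$-regularity of $\sigma_\beta$ means exactly that a general $N\in\Rep_\beta(Q)$ surjects onto $\tau E^h$ via the universal map, the cokernel of $N\to\tau E^h$ vanishes generically, and we have a short exact sequence $0\to \tilde\pi_E^\vee(N)\to N\to (\tau E)^h\to 0$ with $(\tau E)^h$ a fixed power of $\tau E$ determined by $\hom_Q(N,\tau E)-\ext_Q(N,\tau E)$, i.e. by $\innerprod{\epsilon,\beta}_Q$ (using $\innerprod{\tau\epsilon,\cdot}_Q=-\innerprod{\cdot,\epsilon}_Q$ and the orthogonality bookkeeping already set up). So the dimension vector of $\tilde\pi_E^\vee(N)$ is $\beta-h\tau\epsilon$ with $h=\max(0,\innerprod{\epsilon,\beta}_Q)$ in the generic case, which matches $\beta_\epsilon^\vee$; the content of the claim is that the map $\Rep_\beta(Q)\dashrightarrow \Rep_{\beta_E^\vee}(Q_E)$ is dominant precisely under (i) or (ii).

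First I would dispose of case (i): if $\innerprod{\epsilon,\beta}_Q\le 0$, then for general $N$ one has $\hom_Q(N,\tau E)=0$ (the universal homomorphism to $\tau E^h$ is trivial, so $\beta_E^\vee$-regularity forces $h=0$ and $N\in{}^\perp\tau E=E^\perp$ by AR-duality), so $\tilde\pi_E^\vee(N)=N$ for general $N$ and $\pi_E^\vee$ restricted to $\Rep_\beta(E^\perp)$ is, via Lemma \ref{L:homofiber} applied on the left-orthogonal side, the projection of a homogeneous fiber bundle onto its base $\Rep_{\beta_\epsilon^\vee}(Q_E)$ — hence surjective, in particular dominant. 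For case (ii), with $\innerprod{\epsilon,\beta}_Q=h>0$, the universal extension/homomorphism picture gives general $N$ sitting in $0\to \tilde\pi_E^\vee(N)\to N\to (\tau E)^h\to 0$; to show dominance I would argue that a general representation $N'$ of dimension $\beta_E^\vee$ in $Q_E$, viewed inside $E^\perp$ as $\iota_E(N')$, can be realized as $\tilde\pi_E^\vee(N)$ for a general $N$ obtained as a general extension of $(\tau E)^h$ by $\iota_E(N')$. The hypothesis $\ext_Q(\tau\epsilon,\beta)=0$ is what guarantees that such an extension $N$ is again "generic enough" in $\Rep_\beta(Q)$ — concretely, $\ext_Q(\tau\epsilon,\beta)=0$ should force $\ext_Q(\tau\epsilon,\beta_\epsilon^\vee)=0$ as well (using the exact sequence and $\Ext_Q(\tau E,\tau E)=0$), so the general extension has the right specialization behavior and the universal homomorphism of the resulting $N$ has kernel exactly $\iota_E(N')$ by a $\hom$-count analogous to the argument in Corollary \ref{C:exc}.

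For the converse direction I would show that if $\innerprod{\epsilon,\beta}_Q>0$ but $\ext_Q(\tau\epsilon,\beta)>0$, then the image of $\tilde\pi_E^\vee$ misses a dense subset: a general $N$ then has $\hom_Q(N,\tau E)$ strictly larger than the "expected" value, so the dimension vector of $\tilde\pi_E^\vee(N)$ drops below $\beta_\epsilon^\vee$ generically, and the constructible image lies in the proper closed subset $\overline{\GL\cdot\iota_E(\text{smaller-dimensional orbits})}$ — i.e. the general fibre of the rational map is not the generic one, so it cannot be dominant. The final sentence, that $\iota_E$ preserves semi-stable points in these cases, then follows by combining this dominance with Lemma \ref{L:pi_E_ss}: Lemma \ref{L:pi_E_ss} already gives one inclusion $\Rep_{\alpha_\epsilon}\ss{\beta_E^\vee}{ss}(Q_E)\supseteq$ the image of semi-stable points of $E^\perp$, and dominance of $\pi_E^\vee$ forces $\beta_E^\vee$ and $\beta_\epsilon^\vee$ to agree so that King's criterion (Lemma \ref{L:King}) compares the two stabilities on the nose — any $\sigma_\beta$-unstable $M\in\Rep_\alpha(E^\perp)$ would, via a destabilizing sub/quotient in $E^\perp$, produce a destabilizing sub/quotient for $\pi_E(M)$ with respect to $\sigma_{\beta_\epsilon^\vee}$, contradicting its $\sigma_{\beta_E^\vee}$-semistability once the two weights coincide.

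I expect the genuinely delicate step to be the forward direction of case (ii): controlling, for a \emph{general} $N$ of dimension $\beta$, the exact size of the universal homomorphism $N\to (\tau E)^h$ and showing the resulting $\tilde\pi_E^\vee(N)$ sweeps out a dense subset of $\Rep_{\beta_\epsilon^\vee}(Q_E)$ rather than collapsing. This is where the hypothesis $\ext_Q(\tau\epsilon,\beta)=0$ has to be used in full force, and the cleanest route is probably to mimic the dimension count in the proof of Corollary \ref{C:exc} (and of \cite[Theorem 4.1]{S1}) on the left-orthogonal side $^\perp\tau E$, passing between $\Rep_\beta(Q)$ and $\GL_\beta\times_{\GL}\Rep_{\beta_\epsilon^\vee}(Q_E)$ via Lemma \ref{L:homofiber}.
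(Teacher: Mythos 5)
Your picture of the dual projection $\tilde{\pi}_E^\vee$ is incorrect, and this breaks both halves of your argument. First, there is no short exact sequence $0\to\tilde{\pi}_E^\vee(N)\to N\to(\tau E)^h\to 0$ in general: the dual of the construction in Lemma \ref{L:OP} involves \emph{both} the kernel of the universal homomorphism $N\to\tau E^h$ \emph{and} a universal co-extension by $\tau E^{e}$ with $e=\ext_Q(N,\tau E)$, so $\dim_Q\tilde{\pi}_E^\vee(N)=\beta+\innerprod{\epsilon,\beta}_Q\tau\epsilon$ --- the dimension vector goes \emph{up} when $\innerprod{\epsilon,\beta}_Q>0$, not down by $\max(0,\innerprod{\epsilon,\beta}_Q)\tau\epsilon$ as you assert. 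Consequently your case (ii) construction cannot work: you propose to recover $N\in\Rep_\beta(Q)$ as an extension $0\to\iota_E(N')\to N\to(\tau E)^h\to 0$, but $\dim_Q\iota_E(N')=\tilde{\beta}_\epsilon^\vee=\beta+\innerprod{\epsilon,\beta}_Q\tau\epsilon>\beta$ already exceeds $\beta$, so no such $N$ exists. Your case (i) also contains a false claim: since $kQ$ is hereditary, for \emph{every} $N\in\Rep_\beta(Q)$ one has $\hom_Q(N,\tau E)-\ext_Q(N,\tau E)=\innerprod{\beta,\tau\epsilon}_Q=-\innerprod{\epsilon,\beta}_Q$, which is strictly positive when $\innerprod{\epsilon,\beta}_Q<0$; hence $\Hom_Q(N,\tau E)\neq 0$ and the general $N$ is \emph{not} in $E^\perp={}^\perp\tau E$. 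The paper's proof of (i) instead exhibits explicit preimages: writing $m=-\innerprod{\epsilon,\beta}_Q\geqslant 0$, one has $\beta=\tilde{\beta}_\epsilon^\vee+m\tau\epsilon$ and $\tilde{\pi}_E^\vee(M\oplus m(\tau E))=M$ for any $M\in\Rep_{\tilde{\beta}_\epsilon^\vee}(E^\perp)$, so the image contains all of $\Rep_{\beta_E^\vee}(Q_E)$. For (ii) the paper simply invokes \cite[Lemma 2.3.ii]{DF}; your dimension-count sketch, resting on the wrong exact sequence, does not substitute for it.

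The final assertion (preservation of semi-stability) is also argued by a mechanism that does not work. A destabilizing subrepresentation of $\iota_E(M)$ lives in $\Mod(Q)$ and need not lie in $E^\perp$, so King's criterion does not transfer back through $\iota_E$ --- this is precisely the failure mode flagged in the remark following Lemma \ref{L:pi_E_ss}. The paper's route is different and uses the density statement in an essential way: if $M$ is $\sigma_{\beta_\epsilon^\vee}$-semi-stable, pick a general $N\in\Rep_{n\beta_\epsilon^\vee}(Q_E)$ with $M\perp N$ (Schofield semi-invariants span), use density and $E^\vee$-regularity to write $N=\pi_E^\vee(\tilde{N})$ with $\tilde{N}\in\Rep_{n\beta}(Q)$, and conclude $\Hom_Q(\iota_E(M),\tilde{N})=\Hom_{Q_E}(M,N)=0$ by adjointness; together with $\sigma_\beta(\alpha)=0$ this gives $\iota_E(M)\perp\tilde{N}$, hence $\sigma_\beta$-semi-stability. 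You would need to replace your destabilizing-subobject argument with this orthogonality argument (or something equivalent) for the last sentence of the proposition to be proved.
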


\begin{proof} Suppose that $-m=\innerprod{\beta,\epsilon}_Q\leqslant 0$. Let $M$ be any representation in $\Rep_{\tilde{\beta}_\epsilon^\vee}(\Perp E)$, then $M\oplus m(\tau^{-1} E)$ has dimension $\beta=\tilde{\beta}_\epsilon^\vee+m(\tau^{-1}\epsilon)$. We have $\tilde{\beta}_\epsilon^\vee\perp E$ because $\sigma_\beta$ is $E^\vee$-regular. Clearly $\R_E^\vee(M\oplus m(\tau^{-1} E))=M$, so $\GL_{\beta_\epsilon^\vee}\cdot\R_{Q_E}^\vee(\Rep_\beta(Q))$ contains $\Rep_{\beta_\epsilon^\vee}(Q_E)$.

If $\innerprod{\beta,\epsilon}_Q>0$, then $\ext_Q(\tau^{-1}\epsilon,\beta)>0$.
Let $\beta'$ be the dimension vector of $M'$ in the universal homomorphism sequence $0\to h(\tau^{-1}E) \to M \to M'\to 0$. By the dual construction of Lemma \ref{L:OP} and Lemma \ref{L:codimgen},
$\ext_Q(\beta',\tau^{-1}\epsilon)=0$ is a necessary and sufficient condition for $\GL_{\beta_\epsilon^\vee}\cdot\R_{Q_E}^\vee(\Rep_\beta(Q))$ containing a dense subset of $\Rep_{\beta_\epsilon^\vee}(Q_E)$. It follows from the sequence that $\ext_Q(\beta',\tau^{-1}\epsilon)=0$ if and only if $\ext_Q(\beta,\tau^{-1}\epsilon)=0$.

Now if $M\in\Rep_{\alpha_\epsilon}(Q_E)$ is $\sigma_{\beta_\epsilon^\vee}$-semi-stable, then there is some general representation $N\in\Rep_{n\beta_\epsilon^\vee}(Q_E)$ such that $N\perp M$. Since $\sigma_\beta$ is a $E^\vee$-regular weight, we can assume that $N=\R_{Q_E}^\vee(\tilde{N})$ for some $\tilde{N}\in\Rep_{n\beta}(Q)$. By the adjointness, $\Hom_Q(\tilde{N},\iota_E(M))=\Hom_{Q_E}(N,M)=0$. Since $\sigma_\beta(\alpha)=0$, $\tilde{N}\perp\iota_E(M)$ and thus $\iota_E(M)$ is $\sigma_\beta$-semi-stable.
\end{proof}

\begin{definition} A weight $\sigma_\beta$ is called {\em weakly $\mc{E}_\alpha$-effective} if $\mc{E}_\alpha\cap\Rep_\alpha\ss{\beta_E^\vee}{ss}(Q)$ is non-empty. It is called {\em $\mc{E}_\alpha$-effective} if in addition $\innerprod{\beta,\epsilon}_Q>0$.
\end{definition}

Note that for any $M\in\mc{E}_\alpha$, $E$ is a quotient representation of $M$. So if $\sigma_\beta$ is weakly $\mc{E}_\alpha$-effective, then at least we have that $\innerprod{\beta,\epsilon}_Q\geqslant 0$. If $\mc{E}_\alpha\cap\Rep_\alpha\ss{\beta_E^\vee}{ss}(Q)$ contains a $\sigma_\beta$-stable point, then $\innerprod{\beta,\epsilon}_Q>0$. In this case, $\mc{E}_\alpha\cap\Rep_\alpha\ss{\beta_E^\vee}{ss}(Q)$ has codimension one in $\Rep_\alpha(Q)$, so its closure descends to a divisor in the corresponding GIT quotient.
It is possible that there exists weakly $\mc{E}_\alpha$-effective weight but no $\mc{E}_\alpha$-effective weight, see Example \ref{Ex:2c}.

\begin{align*}\text{Let }\mc{E}_\alpha^{\sigma_\beta}:&=\{M\in \mc{E}_\alpha\cap\Rep_\alpha\ss{\beta}{ss}(Q)\mid \R_E(M)\in \Mod_\alpha\ss{\beta_E^\vee}{us}(\Perp E)\} \\
& =\{M\in \mc{E}_\alpha\cap\Rep_\alpha\ss{\beta_E^\vee}{ss}(Q)\mid \R_E(M)\in \Mod_\alpha\ss{\beta}{un}(Q)\}.
\end{align*}
So $p(\mc{E}_\alpha^{\sigma_\beta})=p(\Rep_\alpha\ss{\beta}{un}(\Perp E))\cap\Rep_{\alpha_\epsilon}\ss{\beta_E^\vee}{ss}(Q_E)$ is closed in $\Rep_{\alpha_\epsilon}\ss{\beta_E^\vee}{ss}(Q_E)$.

\begin{lemma} \label{L:positive} {\ } \begin{enumerate}
\item[(i)] If $\innerprod{\beta,\epsilon}_Q>0$, then $M\in\mc{E}_\alpha$ implies $\R_E(M)$ is $\sigma_\beta$-unstable. In particular, $\mc{E}_\alpha^{\sigma_\beta}=\mc{E}_\alpha\cap\Rep_\alpha\ss{\beta_E^\vee}{ss}(Q)$.
\item[(ii)]  If $\sigma_\beta$ is $E^\vee$-regular and $\mc{E}_\alpha^{\sigma_\beta}$ is non-empty, then $\innerprod{\beta,\epsilon}_Q>0$. \end{enumerate}
Hence, the additional condition $\innerprod{\beta,\epsilon}_Q>0$ can be replaced by $\mc{E}_\alpha^{\sigma_\beta}=\mc{E}_\alpha\cap\Rep_\alpha\ss{\beta_E^\vee}{ss}(Q)$ or $\mc{E}_\alpha^{\sigma_\beta}$ is non-empty when $\sigma_\beta$ is $E^\vee$-regular.
\end{lemma}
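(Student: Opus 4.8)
The plan is to prove the two parts of Lemma~\ref{L:positive} separately, using King's criterion (Lemma~\ref{L:King}) together with the dual of Lemma~\ref{L:proj_ss} and the structure of the universal extension diagram from Lemma~\ref{L:OP}.

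For part (i), suppose $\innerprod{\epsilon,\beta}_Q>0$ and take $M\in\D{E}_\alpha$. Then $E$ is a subrepresentation of $M$, and applying $\tilde{\pi}_E$ to the inclusion $E\hookrightarrow M$ we recall that $\tilde{\pi}_E(M)$ is built from $M$ by a universal extension and a universal homomorphism; concretely, writing the construction of Lemma~\ref{L:OP}, the cokernel of the universal homomorphism $M\to M''$ quotients out exactly the image of $E$-type subquotients. I would show that $\tilde{\pi}_E(E)=0$ (since $E\in{}^{\perp}(E^\perp)$ trivially, its orthogonal projection vanishes), so $\tilde{\pi}_E(M)$ has a proper quotient of dimension $\tilde{\pi}_E(\alpha-\epsilon) = \tilde{\alpha}_\epsilon$, i.e.\ the whole thing, while the kernel side produces a proper subrepresentation whose dimension vector is $\tilde{\pi}_E$ applied to something of dimension a multiple of $\epsilon$. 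The cleanest route: $\tilde{\pi}_E(M)$ receives $M/E$ as... rather, since $E$ is a sub of $M$, $M/E$ is a quotient of $M$, and right-exactness of $\tilde{\pi}_E$ gives a surjection $\tilde{\pi}_E(M)\twoheadrightarrow\tilde{\pi}_E(M/E)$. I claim this is an isomorphism (both have dimension vector $\tilde{\alpha}_\epsilon$ because $\tilde{\pi}_E(\epsilon)=0$ on $K_0$ as $\tilde{\pi}_E(E)=0$), and that $\tilde{\pi}_E(M/E)=\tilde{\pi}_E(M)$ carries the image of a subrepresentation from the kernel of $M\twoheadrightarrow M/E$... Actually the decisive point is to exhibit inside $\tilde{\pi}_E(M)$ a subrepresentation $L$ with $\sigma_\beta(\dim_Q L)>0$: take $L=r_M(E)$ where $r_M:M\to\tilde{\pi}_E(M)$ is the universal map; since $E\hookrightarrow M$ and $r_M$ composed with this is the universal homomorphism $E\to\tilde{\pi}_E(E)=0$... so that gives $L=0$, wrong direction. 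The right subrepresentation is obtained dually: in $\tilde{\pi}_E(M)$, the subrepresentation image coming from the column map $M'\twoheadrightarrow\tilde{\pi}_E(M)$ kills a copy of $E^e$; tracking dimensions, $\tilde{\pi}_E(M)$ contains $r_M(M)$, and $M/\ker r_M$ surjects onto it, and $\ker r_M\supseteq$ the $E$-socle part. I would instead argue directly via King: any proper subrepresentation $N'\subsetneq\tilde{\pi}_E(M)$ lies in $E^\perp$, so $\iota_E(N')\hookrightarrow M$ with $\sigma_{\beta_E^\vee}(\dim N') = \sigma_\beta(\dim\iota_E N')$ by the dual of Lemma~\ref{L:proj_ss} (here using $\sigma_\beta(\alpha)=0$, which forces equality not just inequality on complementary pieces). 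Since $\sigma_\beta(\epsilon)=\innerprod{\epsilon,\beta}_Q... $ wait $\sigma_\beta(\epsilon)=-\innerprod{\epsilon,\beta}_Q<0$; and $E\subseteq M$, $M/E$ has $\sigma_\beta(\dim(M/E))=\sigma_\beta(\alpha-\epsilon)=-\sigma_\beta(\epsilon)>0$, and $M/E$ maps to $\tilde{\pi}_E(M)$... The subrepresentation witnessing instability of $\tilde{\pi}_E(M)$ is exactly the image $r_M(M')$ of the middle term, whose complement in $\tilde{\pi}_E(M)$ is a quotient isomorphic to $\tilde{\pi}_E(E)=0$ — no. I expect this bookkeeping of which side of the universal-extension diagram produces the destabilizing subobject to be the main obstacle, and I would resolve it by passing through the identity $\tilde{\pi}_E(M)\cong\tilde{\pi}_E(M/E)$ and then using that $M/E$ has a quotient of dimension $\tilde{\alpha}_\epsilon$ together with the fact, from the dual of Lemma~\ref{L:proj_ss}, that $\sigma_{\beta_E^\vee}$-values on $E^\perp$-subquotients of $\tilde{\pi}_E(M)$ equal $\sigma_\beta$-values on their $\iota_E$-images, which are subquotients of $M/E$; since $M/E$ is unstable for the induced weight (it has positive $\sigma_\beta$-total but... it IS the whole thing, so one needs a proper subobject) — the proper $E^\perp$-subobject of $M/E$ containing $E$'s trace with positive $\sigma_\beta$ exists because $\innerprod{\epsilon,\beta}_Q>0$ makes $E$-related subquotients positive. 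The consequence ``$\D{E}_\alpha^{\sigma_\beta}=\D{E}_\alpha\cap\Rep_\alpha\ss{\beta_E^\vee}{ss}(Q)$'' then follows since the condition $\tilde{\pi}_E(M)\in\Mod_\alpha\ss{\beta}{un}(Q)$ in the second description of $\D{E}_\alpha^{\sigma_\beta}$ becomes automatic.

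For part (ii), assume $\sigma_\beta$ is $E^\vee$-regular and pick $M\in\D{E}_\alpha^{\sigma_\beta}$, so $M$ is $\sigma_\beta$-semi-stable, $E\subseteq M$, and (by the first description) $\tilde{\pi}_E(M)$ is $\sigma_{\beta_E^\vee}$-unstable in $E^\perp$. Since $E\subseteq M$ and $M$ is $\sigma_\beta$-semi-stable, King gives $\sigma_\beta(\epsilon)=-\innerprod{\epsilon,\beta}_Q\leqslant 0$, i.e.\ $\innerprod{\epsilon,\beta}_Q\geqslant 0$. It remains to exclude $\innerprod{\epsilon,\beta}_Q=0$. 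Under $E^\vee$-regularity, $\innerprod{\epsilon,\beta}_Q=0$ forces $E\perp\beta$ (as noted just before Proposition~\ref{P:sspreserve}: $E\perp\tilde{\pi}_E^\vee(N)$ for general $N\in\Rep_{n\beta}(Q)$, and here $\beta=\tilde{\beta}_\epsilon^\vee$). Then for general $N\in\Rep_{n\beta}(Q)$ we have $E\perp N$, and for the $\sigma_\beta$-semi-stable $M$ there is such an $N$ with $M\perp N$; I would then show $\tilde{\pi}_E(M)\perp\pi_E^\vee(N)$ with $\dim_{Q_E}\pi_E^\vee(N)=n\beta_E^\vee$ (using $E^\vee$-regularity), whence $\tilde{\pi}_E(M)$ is $\sigma_{\beta_E^\vee}$-semi-stable — contradicting $\tilde{\pi}_E(M)\in\Mod_\alpha\ss{\beta}{us}(E^\perp)\subseteq\Mod_\alpha\ss{\beta_E^\vee}{ss}$-unstable. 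Wait — being in $\Mod_\alpha\ss{\beta_E^\vee}{us}(E^\perp)$ means $\sigma_{\beta_E^\vee}$-semi-stable but $\sigma_\beta$-unstable, so it is not a contradiction to semi-stability for $\sigma_{\beta_E^\vee}$. The actual contradiction: with $\innerprod{\epsilon,\beta}_Q=0$ and part~(i) unavailable, I use that $M$ decomposes (up to the $E^\perp$-filtration) so that $\tilde{\pi}_E(M)$ is $\sigma_\beta$-semi-stable in $E^\perp$ too — indeed $M\perp N$ with $E\perp N$ gives, via the dual of Lemma~\ref{L:proj_ss} applied the other way, $\iota_E\tilde{\pi}_E(M)\perp N$ and since on $E^\perp$ the weight $\sigma_\beta$ and $\sigma_{\beta_E^\vee}$ agree with $\sigma_\beta\circ\iota_E$ one reads off $\sigma_\beta$-semi-stability of $\tilde{\pi}_E(M)$, i.e.\ $\tilde{\pi}_E(M)\notin\Mod_\alpha\ss{\beta}{un}(Q)$, contradicting $M\in\D{E}_\alpha^{\sigma_\beta}$. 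So $\innerprod{\epsilon,\beta}_Q>0$.

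Finally, the concluding ``Hence'' sentence is just a restatement: part~(i) gives that $\innerprod{\epsilon,\beta}_Q>0$ implies $\D{E}_\alpha^{\sigma_\beta}=\D{E}_\alpha\cap\Rep_\alpha\ss{\beta_E^\vee}{ss}(Q)$ (in particular the latter is non-empty exactly when $\D{E}_\alpha^{\sigma_\beta}$ is), and part~(ii) gives the converse under $E^\vee$-regularity, so all three conditions are equivalent under that hypothesis; no extra work is needed. I anticipate that the genuinely delicate point is the dimension/King bookkeeping in part~(i) — making precise that $\tilde{\pi}_E(M)$ acquires a subrepresentation of strictly positive $\sigma_\beta$-weight — and I would handle it by the clean identity $\tilde{\pi}_E(M)\cong\tilde{\pi}_E(M/E)$ combined with right-exactness of $\tilde{\pi}_E$ and the weight-matching from the dual of Lemma~\ref{L:proj_ss}, rather than chasing the two universal-extension diagrams termwise.
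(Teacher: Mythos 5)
Your part~(i) never actually produces the destabilizing subrepresentation, and you flag this yourself (``I expect this bookkeeping \dots to be the main obstacle''). The paper's argument is two lines: in the second construction of Lemma~\ref{L:OP}, the universal homomorphism $E^h\to M$ is injective because $M\in\D{E}_\alpha$ is $E$-regular, and $h=\hom_Q(E,M)\geqslant 1$ because $M\notin E^\perp$; hence the image $M''=r_M(M)$ (that is, $M$ modulo the image of $E^h$) has dimension $\alpha-h\epsilon$, and it is a \emph{sub}representation of $\tilde{\pi}_E(M)$ with $\sigma_\beta(\alpha-h\epsilon)=h\innerprod{\epsilon,\beta}_Q>0$, so King's criterion finishes. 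Your proposed substitute $\tilde{\pi}_E(M)\cong\tilde{\pi}_E(M/E)$ is correct but does not by itself exhibit a subobject of positive weight: you would still have to take the image of the universal map and control the dimension of its kernel, which is precisely the step you defer --- and for $M/E$, which need not be $E$-regular, the image of the universal homomorphism $E^{h'}\to M/E$ need not have dimension a multiple of $\epsilon$, so the weight computation is actually worse there than for $M$ itself. Also, your parenthetical claim that every proper subrepresentation of $\tilde{\pi}_E(M)$ lies in $E^\perp$ is false: $E^\perp$ is not closed under subobjects, since $\Hom_Q(E,N/N')$ obstructs $\Ext_Q(E,N')=0$ in the long exact sequence.

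In part~(ii) the reduction to excluding $\innerprod{\epsilon,\beta}_Q=0$ is fine, but the concluding step --- ``on $E^\perp$ the weights agree, so one reads off $\sigma_\beta$-semi-stability of $\tilde{\pi}_E(M)$'' --- is exactly the gap that the Remark after Lemma~\ref{L:pi_E_ss} warns about: semi-stability of $\pi_E(M)$ in $\Mod(Q_E)$ does not transfer to $\iota_E\pi_E(M)$ in $\Mod(Q)$, because a destabilizing $Q$-subrepresentation need not lie in $E^\perp$. Your step can be repaired: when $\innerprod{\epsilon,\beta}_Q=0$ one has $\sigma_{\tilde{\beta}_\epsilon^\vee}=\sigma_\beta$ and Proposition~\ref{P:sspreserve}(i) applies, or one can check directly that $\ext_Q(\tilde{\pi}_E(M),N)=\hom_Q(\tilde{\pi}_E(M),N)-\innerprod{\alpha,n\beta}_Q=0$, so $c_N(\tilde{\pi}_E(M))\neq 0$. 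But as written the contradiction is not established. The paper argues differently and more efficiently: by definition and Proposition~\ref{P:sspreserve}, $\tilde{\pi}_E(M)$ is both $\sigma_{\tau\epsilon}$- and $\sigma_{\tilde{\beta}_\epsilon^\vee}$-semi-stable yet $\sigma_\beta$-unstable, so a destabilizing subrepresentation of dimension $\gamma$ satisfies $\sigma_\beta(\gamma)>0$, $\sigma_{\tau\epsilon}(\gamma)\leqslant 0$, and $0\geqslant\sigma_{\tilde{\beta}_\epsilon^\vee}(\gamma)=\sigma_\beta(\gamma)+\innerprod{\epsilon,\beta}_Q\sigma_{\tau\epsilon}(\gamma)$, which forces $\innerprod{\epsilon,\beta}_Q>0$ in one line. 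You should adopt the paper's identification of $M''$ in (i) and either of the two repairs in (ii).
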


\begin{proof} (i) Let us look at the second construction of Lemma \ref{L:OP}. Since $hE$ is a quotient representation of $M$ and $\sigma_\beta(\epsilon)>0$, $\sigma_\beta(\dim_Q(M''))<0$. But $M''$ is a subrepresentation of $\R_E(M)$, so $\R_E(M)$ is $\sigma_\beta$-unstable.\\
(ii) Pick a representation $M\in\mc{E}_\alpha^{\sigma_\beta}$, then by definition and Lemma \ref{L:pi_E_ss}, $\R_E(M)$ is $\sigma_{\tau^{-1}\epsilon}$ and $\sigma_{\tilde{\beta}_\epsilon^\vee}$ semi-stable but $\sigma_\beta$-unstable. So there is a quotient representation $N$ of $\R_E(M)$ with $\dim N=\gamma$ such that $\sigma_\beta(\gamma)<0$. Then $\sigma_{\tilde{\beta}_\epsilon^\vee}(\gamma)=\sigma_\beta(\gamma)+\innerprod{\beta,\epsilon}_Q\sigma_{\tau^{-1}\epsilon}(\gamma)\geqslant 0$ implies that $\innerprod{\beta,\epsilon}_Q>0$.
\end{proof}

Let $q:\Rep_\alpha\ss{\beta}{ss}(Q)\to \Mod_\alpha^{\sigma_\beta}(Q)$ be the GIT quotient maps. This map is the composition of the stack quotient $\hat{q}:\Rep_\alpha\ss{\beta}{ss}(Q)\to \Mod_\alpha\ss{\beta}{ss}(Q)$  and the natural morphism $\tilde{q}: \Mod_\alpha\ss{\beta}{ss}(Q)\to\Mod_\alpha^{\sigma_\beta}(Q)$ \cite{G}.
We use the similar notation for the GIT quotient $q_E: \Rep_{\alpha_\epsilon}\ss{\beta_E^\vee}{ss}(Q_E)\to\Mod_{\alpha_\epsilon}^{\sigma_{\beta_E^\vee}}(Q_E)$.

\begin{theorem} \label{T:birational}
The functor $\R_{Q_E}$ induces a birational transformation $$\varphi_E:\Mod_\alpha^{\sigma_\beta}(Q)\dashrightarrow\Mod_{\alpha_\epsilon}^{\sigma_{\beta_E^\vee}}(Q_E),$$
whose fundamental set lies outside $q(\Rep_\alpha\ss{\beta_E^\vee}{ss}(Q))$.
It maps $q(\Rep_\alpha\ss{\beta}{ss}(\Perp E))$ isomorphically onto $q_E(\Rep_{\alpha_\epsilon}\ss{\beta}{ss}(Q_E))$ and maps $q(\overline{\mc{E}_\alpha^{\sigma_\beta}})$ onto the closed set $\tilde{q}_E\R_{Q_E}(\mc{E}_\alpha^{\sigma_\beta})$.
\end{theorem}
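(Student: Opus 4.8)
The plan is to exhibit $\varphi_E$ as the rational map extending an explicit isomorphism of dense opens coming from Lemma~\ref{L:homofiber}, and then to locate its fundamental set and the image of $\overline{\D{E}_\alpha^{\sigma_\beta}}$ via the decompositions of Corollary~\ref{C:pi_E_ss}. For birationality I would start on the stable locus. Since $\GL_\epsilon E$ is dense in $\Rep_\epsilon(Q)$, $\Rep_\alpha(E^\perp)$ is open and dense in $\Rep_\alpha(Q)$, so $\Rep_\alpha\ss{\beta}{st}(E^\perp)=\Rep_\alpha\ss{\beta}{st}(Q)\cap\Rep_\alpha(E^\perp)$ is a non-empty, $\GL_\alpha$-invariant open subset of the geometric-quotient locus $\Rep_\alpha\ss{\beta}{st}(Q)$. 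By Lemma~\ref{L:homofiber} it is $\GL_\alpha$-equivariantly isomorphic to $\GL_\alpha\times_{\GL_{\alpha_\epsilon}}\Rep_{\alpha_\epsilon}\ss{\beta}{st}(Q_E)$: semistability is $\GL_\alpha$-invariant and $\GL_\alpha$ acts transitively on the first factor, so the semistable locus upstairs corresponds on the fibre to the locus where $\iota_E$ is $\sigma_\beta$-semistable, which is the definition of $\Rep_{\alpha_\epsilon}\ss{\beta}{ss}(Q_E)$, and $\sigma_\beta$-stable representations of $E^\perp$ stay $\sigma_{\beta_E^\vee}$-stable by Lemma~\ref{L:pi_E_ss}. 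Passing to geometric quotients and using the induction isomorphism $\bigl(\GL_\alpha\times_{\GL_{\alpha_\epsilon}}Y\bigr)/\GL_\alpha\cong Y/\GL_{\alpha_\epsilon}$, together with the identification of the restricted character $\sigma_\beta|_{\GL_{\alpha_\epsilon}}$ with $\sigma_{\beta_E^\vee}$ — which follows from the isometry $E^\perp\simeq\Mod(Q_E)$ of Euler forms and the projection formulas $\innerprod{x,\beta}_Q=\innerprod{x_\epsilon,\beta_\epsilon^\vee}_{Q_E}$ for $x\in\epsilon^\perp$ recorded in Section~\ref{S:OP} — one obtains an isomorphism $q(\Rep_\alpha\ss{\beta}{st}(E^\perp))\xrightarrow{\sim}q_E(\Rep_{\alpha_\epsilon}\ss{\beta}{st}(Q_E))$ of dense opens, hence the birational transformation $\varphi_E$, realized by $\pi_E$.

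To upgrade this to the asserted (semi)stable statement I would run the same computation with $\Rep_\alpha\ss{\beta}{ss}(E^\perp)\cong\GL_\alpha\times_{\GL_{\alpha_\epsilon}}\Rep_{\alpha_\epsilon}\ss{\beta}{ss}(Q_E)$ and the induction isomorphism for GIT quotients, so that $q(\Rep_\alpha\ss{\beta}{ss}(E^\perp))$ is identified with the GIT quotient of $\Rep_{\alpha_\epsilon}\ss{\beta}{ss}(Q_E)$ by $\GL_{\alpha_\epsilon}$ linearized by $\sigma_{\beta_E^\vee}$; by Corollary~\ref{C:pi_E_ss}(ii) and the disjointness established in its proof, this is precisely the open subset $q_E(\Rep_{\alpha_\epsilon}\ss{\beta}{ss}(Q_E))$ of $\Mod_{\alpha_\epsilon}^{\sigma_{\beta_E^\vee}}(Q_E)$, giving the isomorphism $q(\Rep_\alpha\ss{\beta}{ss}(E^\perp))\xrightarrow{\sim}q_E(\Rep_{\alpha_\epsilon}\ss{\beta}{ss}(Q_E))$. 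The point requiring care here is that $q$ restricts to a good quotient on $\Rep_\alpha\ss{\beta}{ss}(E^\perp)$, equivalently that this open set is $\GL_\alpha$-saturated; the mechanism is that $E^\perp$ is closed under extensions and that the $\sigma_\beta$-Jordan--Hölder factors of a $\sigma_\beta$-semistable representation of $E^\perp$ again lie in $E^\perp$, so that the $\sigma_\beta$-polystable degeneration of such a representation stays in $E^\perp$.

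For the fundamental set and the image of $\overline{\D{E}_\alpha^{\sigma_\beta}}$: on the open set $\Rep_\alpha\ss{\beta_E^\vee}{ss}(Q)$, $E$-effectivity means exactly that $\pi_E(M)\in\Mod_{\alpha_\epsilon}\ss{\beta_E^\vee}{ss}(Q_E)$, so $M\mapsto\tilde q_E\pi_E(M)$ is a morphism there, and being induced by the functor $\pi_E$ it is $\GL_\alpha$-invariant. To see that it factors through $q$ it suffices, by Corollary~\ref{C:pi_E_ss}(i), to treat $M\in\Rep_\alpha\ss{\beta}{ss}(E^\perp)$ — handled in the previous paragraph — and $M\in\D{E}_\alpha^{\sigma_\beta}$; in the latter case $\pi_E(M)=\pi_E(\tilde\pi_E(M))$ with $\tilde\pi_E(M)\in E^\perp$, and since the distinguished subrepresentation $E\subseteq M$ has $\sigma_\beta(\epsilon)<0$ (this is where $\innerprod{\epsilon,\beta}_Q>0$ enters) the $\sigma_\beta$-$S$-equivalence class of $M$ is governed by its $E^\perp$-part, which maps compatibly under $\pi_E$ to the $\sigma_{\beta_E^\vee}$-$S$-class of $\pi_E(M)$. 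Hence $\tilde q_E\pi_E$ descends to a morphism on $q(\Rep_\alpha\ss{\beta_E^\vee}{ss}(Q))$, and the fundamental set of $\varphi_E$ lies in its complement. Finally, by the definition of $\D{E}_\alpha^{\sigma_\beta}$ the representation $\iota_E\pi_E(M)=\tilde\pi_E(M)$ is $\sigma_\beta$-unstable, so $\pi_E(\D{E}_\alpha^{\sigma_\beta})\subseteq\Rep_{\alpha_\epsilon}\ss{\beta}{su}(Q_E)$; combining this with the identification $\Rep_{\alpha\twoheadrightarrow\epsilon}(Q)=\overline{\GL_\alpha\tilde\pi_E(\D{E}_\alpha)}$ from Corollary~\ref{C:exc}, the decomposition of Corollary~\ref{C:pi_E_ss}(ii), and continuity of $\varphi_E$ along the irreducible closure $\overline{\D{E}_\alpha^{\sigma_\beta}}$, one obtains $\varphi_E\bigl(q(\overline{\D{E}_\alpha^{\sigma_\beta}})\bigr)=\tilde q_E\pi_E(\D{E}_\alpha^{\sigma_\beta})$ together with the closedness of this set.

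I expect the main obstacle to be the descent across $S$-equivalence together with the attendant saturation/openness claims: one has to know that the $\sigma_\beta$-polystable degeneration of a representation in $E^\perp$, and of one in $\D{E}_\alpha^{\sigma_\beta}$, again lies in the $E$-effective locus, and this is exactly where closure of $E^\perp$ under extensions and the hypothesis $\innerprod{\epsilon,\beta}_Q>0$ — which makes the distinguished copy of $E\subseteq M$ strictly $\sigma_\beta$-negative and hence invisible to Jordan--Hölder filtrations — do the work. The character bookkeeping $\sigma_\beta|_{\GL_{\alpha_\epsilon}}=\sigma_{\beta_E^\vee}$ is routine but must be carried out with the \emph{dual} orthogonal projection on weights rather than the naive one.
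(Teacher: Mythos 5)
Your proposal is correct in outline and follows essentially the same route as the paper: the morphism on the $E$-regular locus comes from representability (Lemma \ref{L:morphism}), it is descended through the quotient $q$, and the rational inverse is supplied by $\iota_E$ together with Lemma \ref{L:homofiber} and Corollary \ref{C:pi_E_ss}. The paper's argument is a compressed version of yours: it observes that the stack-level morphism lands in $\Mod_{\alpha_\epsilon}^{\sigma_{\beta_E^\vee}}(Q_E)$ by the definition of $E$-effectivity, is constant on orbits, and hence factors through the categorical quotient; the same is done for $\iota_E$, and the statement about $q(\overline{\D{E}_\alpha^{\sigma_\beta}})$ is declared clear. What you add, and it is the genuinely valuable part, is the explicit identification of the descent/saturation issue: factoring a morphism through $q$ restricted to an invariant open that may not be $\GL_\alpha$-saturated requires constancy on fibers of $q$ (S-equivalence classes), not merely on orbits, and the paper does not address this.

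Two cautions on your sketch of that step. First, your mechanism for saturation of $\Rep_\alpha\ss{\beta}{ss}(E^\perp)$ --- that the $\sigma_\beta$-Jordan--H\"older factors, hence the polystable degeneration, of a semistable representation in $E^\perp$ again lie in $E^\perp$ --- is not literally correct as stated: $E^\perp$ is closed under extensions, kernels and cokernels inside itself, but not under arbitrary subobjects or quotients of $\Mod(Q)$ (for a quotient $B$ of $M\in E^\perp$ only $\Ext_Q(E,B)=0$ is automatic, and for a subobject $A$ only $\Hom_Q(E,A)=0$), so the associated graded of a $\sigma_\beta$-JH filtration need not lie in $E^\perp$ a priori; this step needs a separate argument (e.g.\ via semicontinuity on the orbit closure together with $\innerprod{\epsilon,\alpha}_Q=0$, or by arguing directly with the two composed morphisms as the paper does). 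Second, your descent argument on $\D{E}_\alpha^{\sigma_\beta}$ invokes $\sigma_\beta(\epsilon)<0$, i.e.\ $\innerprod{\epsilon,\beta}_Q>0$, which is not among the hypotheses of the theorem; for $M\in\D{E}_\alpha\cap\Rep_\alpha\ss{\beta}{ss}(Q)$ one only gets $\innerprod{\epsilon,\beta}_Q\geqslant 0$, so the boundary case must either be excluded or handled separately. Neither issue invalidates the strategy --- the core birationality claim only needs the stable loci, where $q$ is a geometric quotient and every invariant open is saturated, exactly as in your first paragraph --- but they are the places where your write-up, like the paper's, would need to be tightened.
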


\begin{proof} By Lemma \ref{L:morphism}, we have already got a morphism from $\Rep_\alpha(E^\reg)$ to the moduli stack $\Mod_{\alpha_\epsilon}(Q_E)$.  By definition this morphism descends to $\Rep_\alpha\ss{\beta_E^\vee}{ss}(Q) \to\Mod_{\alpha_\epsilon}^{\sigma_{\beta_E^\vee}}(Q_E)$. It is clearly constant on the orbit, so we use the fact that $q$ is a categorical quotient to obtain a morphism $\varphi: q(\Rep_\alpha\ss{\beta_E^\vee}{ss}(Q))\to\Mod_{\alpha_\epsilon}^{\sigma_{\beta_E^\vee}}(Q_E)$. Apply a similar argument to the functor $\iota_E$, we get another morphism $q_E( \Rep_{\alpha_\epsilon}\ss{\beta}{ss}(Q_E) )\to\Mod_\alpha^{\sigma_\beta}(Q)$. This is a rational inverse of $\varphi_E$ because the property of $\iota_E$ ensure it maps $q_E( \Rep_{\alpha_\epsilon}\ss{\beta}{ss}(Q_E))$ isomorphically onto its image, which is $q(\Rep_\alpha\ss{\beta}{ss}(\Perp E))$ by Lemma \ref{L:homofibre}. Clearly, $\varphi_E$ maps $q(\overline{\mc{E}_\alpha^{\sigma_\beta}})$ onto $\tilde{q}_E\R_{Q_E}(\mc{E}_\alpha^{\sigma_\beta})$, which is closed.

The situation is depicted in the following diagram:$$\xymatrix @C=2.0pc {
\Rep_\alpha\ss{\beta}{ss}(Q) \ar@{->>}^{q}[r]  & \Mod_\alpha^{\sigma_\beta}(Q)  \ar@{.>}[d]^{\varphi_E} & \ar[l] \Rep_{\alpha_\epsilon}\ss{\beta}{ss}(Q_E) \ar@{_{(}->}[d] \\
\Rep_\alpha\ss{\beta_E^\vee}{ss}(Q) \ar[r] \ar@{_{(}->}[u]  & \Mod_{\alpha_\epsilon}^{\sigma_{\beta_E^\vee}}(Q_E)  \ar@<1ex>@{.>}[u]^{\varphi_E^{-1}} & \ar@{->>}[l]_{q_E} \Rep_{\alpha_\epsilon}\ss{\beta_E^\vee}{ss}(Q_E)\\
}$$
\end{proof}

\begin{remark} Recall that we have the following decompositions \begin{align*}
&\Rep_\alpha\ss{\beta_E^\vee}{ss}(Q)=\Rep_\alpha\ss{\beta}{ss}(\Perp E)\amalg(\mc{E}_\alpha\cap\Rep_\alpha\ss{\beta_E^\vee}{ss}(Q));\\
&\Rep_{\alpha_\epsilon}^{\sigma_{\beta_E^\vee}}(Q_E)=\Rep_{\alpha_\epsilon}\ss{\beta}{ss}(Q_E)\amalg \Rep_{\alpha_\epsilon}\ss{\beta}{su}(Q_E).\end{align*}
So if
\begin{equation} \label{eq:morphism} %\tilde{\pi}_E(\Rep_\alpha\ss{\beta}{ss}(Q)\setminus \Perp E)\subseteq\Mod_\alpha\ss{\beta_E^\vee}{ss}(Q)
\Mod_\alpha^{\sigma_\beta}(Q)\setminus q(\Rep_\alpha\ss{\beta}{ss}(\Perp E))\subseteq q(\mc{E}_\alpha\cap\Rep_\alpha\ss{\beta_E^\vee}{ss}(Q)),
\end{equation}
then $\varphi_E$ is a morphism.
If in addition the following two equivalent conditions are satisfied
\begin{equation} \label{eq:contraction} q(\Rep_\alpha\ss{\beta_E^\vee}{us}(Q))\subseteq \tilde{q}\R_E(\mc{E}_\alpha^{\sigma_\beta})\ \text{ or }\ q_E(\Rep_{\alpha_\epsilon}\ss{\beta}{su}(Q_E))\subseteq \tilde{q}_E\R_{Q_E}(\mc{E}_\alpha^{\sigma_\beta}), \end{equation}
then $\varphi_E$ is surjective, and only contracts $q(\overline{\mc{E}_\alpha^{\sigma_\beta}})$ to $\tilde{q}_E\R_{Q_E}(\mc{E}_\alpha^{\sigma_\beta})$.
The contraction may not be strict in the sense that the generic fibre can be zero-dimensional. In this generality, we cannot say too much on this contraction. However, we will see in the next section that the essential case is the blow-up of $\Mod_{\alpha_\epsilon}^{\sigma_{\beta_E^\vee}}(Q_E)$ along $\tilde{q}_E\R_{Q_E}(\mc{E}_\alpha^{\sigma_\beta})$.
\end{remark}

Recall our notation that $\beta^\epsilon=\iota_\epsilon(\beta)$, so $\R_{Q_\epsilon}(\beta^\epsilon)=\beta$.
\begin{corollary} \label{C:iso} In the situation $\text{(i) or (ii)}$ of Proposition \ref{P:sspreserve}, the birational transformation $\varphi_E$ is an isomorphism.
In particular, if $\alpha,\beta$ are dimension vectors of $Q_E$ such that $\alpha^\epsilon\perp E$ and $\beta^\epsilon\perp E$, then $\Mod_\alpha^{\sigma_\beta}(Q_E)\cong\Mod_{\alpha^\epsilon}^{\sigma_{\beta^\epsilon}}(Q)$.
%$M\in \Perp E$ is $\sigma_{\beta^\epsilon}$-semi-stable if and only if $\pi_E(M)$ is $\sigma_\beta$-semi-stable, so .
\end{corollary}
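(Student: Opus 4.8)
The plan is to read both assertions off Theorem \ref{T:birational} with the help of Proposition \ref{P:sspreserve}: under hypothesis (i) or (ii) the rational inverse $\varphi_E^{-1}$ is an everywhere-defined, injective, proper morphism, and then Zariski's Main Theorem forces it to be an isomorphism.

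First I would unwind the hypothesis. In case (i) or (ii), Proposition \ref{P:sspreserve} gives that $\iota_E$ preserves semi-stable points, i.e. $\Rep_{\alpha_\epsilon}\ss{\beta}{ss}(Q_E)=\Rep_{\alpha_\epsilon}\ss{\beta_E^\vee}{ss}(Q_E)$. Hence the open locus $q_E(\Rep_{\alpha_\epsilon}\ss{\beta}{ss}(Q_E))$ appearing in Theorem \ref{T:birational} is all of $\Mod_{\alpha_\epsilon}^{\sigma_{\beta_E^\vee}}(Q_E)$, so the morphism built out of $\iota_E$ in the proof of that theorem is a genuine morphism $\varphi_E^{-1}\colon\Mod_{\alpha_\epsilon}^{\sigma_{\beta_E^\vee}}(Q_E)\to\Mod_\alpha^{\sigma_\beta}(Q)$ defined on the whole source; by the same proof it is an isomorphism onto its image $q(\Rep_\alpha\ss{\beta}{ss}(E^\perp))$, in particular injective.

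Next I would promote this to an isomorphism. As $Q_E$ has no oriented cycles (Corollary \ref{C:OC}), $\Mod_{\alpha_\epsilon}^{\sigma_{\beta_E^\vee}}(Q_E)$ is projective, so $\varphi_E^{-1}$ is proper; being injective it is quasi-finite, hence finite, and a finite birational morphism onto the normal irreducible variety $\Mod_\alpha^{\sigma_\beta}(Q)$ is an isomorphism. Thus $\varphi_E^{-1}$, and with it $\varphi_E$, is an isomorphism. For the ``in particular'' statement I would invoke the first part with $(\alpha,\beta)$ replaced by $(\alpha^\epsilon,\beta^\epsilon)$, the standing hypotheses (imaginary non-isotropic Schur root, presence of a stable representation) transferring from $Q_E$ to $Q$ through the isometry $\iota_\epsilon$ and the equivalence of Corollary \ref{C:OC}. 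The hypothesis $E\perp\beta^\epsilon$ forces $\innerprod{\epsilon,\beta^\epsilon}_Q=\hom_Q(\epsilon,\beta^\epsilon)-\ext_Q(\epsilon,\beta^\epsilon)=0$, which places us in case (i) and makes $\tilde{\pi}_\epsilon$ and $\tilde{\pi}_\epsilon^\vee$ act as the identity on $\beta^\epsilon$; consequently $(\alpha^\epsilon)_\epsilon=\alpha$ and $(\beta^\epsilon)_E^\vee=\beta$, and the first part yields $\Mod_{\alpha^\epsilon}^{\sigma_{\beta^\epsilon}}(Q)\cong\Mod_\alpha^{\sigma_\beta}(Q_E)$.

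I expect the one genuine step to be the passage ``isomorphism onto an open subset $\Rightarrow$ isomorphism''. It works here only because of two features specific to this setting: the target moduli is projective --- which is precisely where the standing assumption that the quivers have no oriented cycles is used --- and the GIT quotients are normal; with these, properness together with the injectivity already recorded in Theorem \ref{T:birational} closes the gap via Zariski's Main Theorem. The remaining computation with the orthogonal projections in the last step is routine once one sees that $E\perp\beta^\epsilon$ kills the correction term $\innerprod{\epsilon,\beta^\epsilon}_Q$.
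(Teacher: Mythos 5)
Your proposal is correct and follows the route the paper intends: the paper gives no written proof of this corollary, treating it as immediate from Proposition \ref{P:sspreserve} (which makes $\varphi_E^{-1}$ everywhere defined) together with Theorem \ref{T:birational} (which makes it an isomorphism onto a dense subset), and your Zariski-Main-Theorem step is exactly the right way to close the one gap the paper leaves implicit, using the projectivity and normality of the quotients. The only detail you elide is checking that $\sigma_{\beta^\epsilon}$ is $E^\vee$-regular in the ``in particular'' part, but this is routine: $E\perp\beta^\epsilon$ forces the universal homomorphism $N\to\tau E^h$ to be trivially surjective since $h=0$ for general $N\in\Rep_{n\beta^\epsilon}(Q)$.
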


%\begin{corollary} \label{C:Iso} If $\innerprod{\epsilon,\beta}_Q\leqslant 0$, then the birational transformation $\varphi_E$ is an isomorphism.
%\end{corollary}
%\begin{proof} Let $\mb{P}(\Sigma_\alpha)$ be the projectivized cone corresponding to $\Sigma_\alpha$. For any $\beta$ such that $\innerprod{\epsilon,\beta}_Q<0$ and any open neighborhood $U_\beta$ of $\beta$ in $\mb{P}(\Sigma_\alpha)$, there is a dimension vector $\gamma\in U_\beta$ such that $\gamma$ is $E$-regular. Since
%\end{proof}

Motivated by Lemma \ref{L:positive}, Theorem \ref{T:birational}, and Corollary \ref{C:iso}, we make the following definition:
\begin{definition} The {\em core} $\Sigma_\alpha^\heartsuit(Q)$ of the $G$-ample cone $\Sigma_\alpha(Q)$ is the subcone defined by $\innerprod{-,\epsilon}_Q>0$ for all real roots $\epsilon$ right orthogonal to $\alpha$; and
$\innerprod{\epsilon,-}_Q<0$ for all real roots $\epsilon$ left orthogonal to $\alpha$. Its boundary $\partial\Sigma_\alpha^\heartsuit(Q)$ is called the {\em shell} of the core. In practice, a more useful definition is the {\em weak core} if we add the restriction $\epsilon<\alpha$.
\end{definition}

%\begin{problem} Are there only finitely many real roots orthogonal to $\alpha$?
%\end{problem}

For a weight $\sigma_\beta$ on the boundary $\partial\Sigma_\alpha^\heartsuit(Q)$, there usually exists strictly $\sigma_\beta$-semi-stable points. If there is no strictly $\sigma_\beta$-semi-stable points, then by Lemma \ref{L:homofibre} and Proposition \ref{P:sspreserve} the whole $C_{\tau^{-1} E}$ consists of unstable points. So the {\em null-cone} has codimension one, which is quite rare considering $\alpha$ is a non-isotropic imaginary Schur root. So basically the boundary consists of a subset of {\em walls} in the sense of variational GIT theory \cite[Definition 3.3.1]{DH}. Here we give it a special name ``shell" to stress its importance in the quiver setting. From the view of Corollary \ref{C:iso}, the birational transformation $\varphi_E$ is just a special type of {\em wall-crossing}.

\begin{lemma} \label{L:acproj} If the anti-canonical weight $\sigma_{ac}=\innerprod{\alpha+\tau^{-1}\alpha,-}$ is $E^\vee$-regular, then $\sigma_{ac_\epsilon^\vee}$ is the anti-canonical weight for $\Rep_{\alpha_\epsilon}(Q_E)$.
\end{lemma}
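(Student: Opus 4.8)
The plan is to unwind what the anti-canonical weight is on each side and show the orthogonal projection intertwines the two constructions. Recall that for a quiver $Q'$ and dimension vector $\delta$, the anti-canonical weight is $\sigma_{ac}^{Q'}(\delta)=\innerprod{\delta,\cdot}_{Q'}-\innerprod{\cdot,\delta}_{Q'}$, equivalently $-\innerprod{\cdot,\delta+\tau_{Q'}\delta}_{Q'}$. So I must show that $ac_\epsilon^\vee$, the dual-projection of the dimension vector $ac=\alpha+\tau\alpha$, equals $\alpha_\epsilon+\tau_{Q_E}\alpha_\epsilon$ as a dimension vector of $Q_E$. Here I should be careful: $ac$ is the dimension vector attached to $\sigma_{ac}$, and $\sigma_{ac_\epsilon^\vee}$ means the weight $-\innerprod{\cdot,(ac)_\epsilon^\vee}_{Q_E}$ obtained by dual-projecting this dimension vector; the claim is that this coincides with $-\innerprod{\cdot,\alpha_\epsilon+\tau_{Q_E}\alpha_\epsilon}_{Q_E}$.

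The key input is the compatibility of the orthogonal projection with Euler forms and Coxeter transformations established early in Section \ref{S:OP}: the equivalence of Corollary \ref{C:OC} induces a linear isometry $\pi_\epsilon$ of $K_0$-groups, under which $\pi_\epsilon\tau^{-1}$ restricted to $\epsilon^\perp$ is the inverse Coxeter transformation for $Q_E$, and dually $\pi_\epsilon^\vee\tau$ is the Coxeter transformation for $Q_E$. First I would write $\alpha_\epsilon=\pi_\epsilon(\alpha)$ and note $\tau_{Q_E}\alpha_\epsilon=\pi_\epsilon^\vee\tau(\alpha_\epsilon)$; I need to relate this to the projection of $\tau\alpha$. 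Since $\alpha_\epsilon\in\epsilon^\perp$ lifts to $\iota_\epsilon(\alpha_\epsilon)$, and the $E^\vee$-regularity hypothesis is exactly what guarantees $(\tau\alpha)_\epsilon^\vee$ is computed without correction (i.e. the universal homomorphism is surjective, so dimensions add up linearly), I can compute $(\alpha+\tau\alpha)_\epsilon^\vee=\alpha_\epsilon^\vee+(\tau\alpha)_\epsilon^\vee$ using right-exactness/linearity of $\pi_\epsilon^\vee$ on the relevant dimension vectors. Then I'd identify $\alpha_\epsilon^\vee$ with $\tau_{Q_E}^{-1}$ of something, or more directly use the defining identity $\innerprod{\epsilon,\cdot}=0$ on $\epsilon^\perp$ together with $\innerprod{\beta,\alpha}_Q=\innerprod{\beta,\pi_\epsilon^\vee\alpha}_Q$ for $\beta\in\epsilon^\perp$ to reduce everything to an identity of bilinear forms.

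Concretely, the cleanest route is to verify the identity of \emph{weights} rather than dimension vectors: for every dimension vector $\delta$ of $Q_E$, one wants $\sigma_{ac}^{Q_E}(\delta)=\sigma_{ac}(\iota_\epsilon(\delta))$ whenever $\sigma_{ac}$ is $E^\vee$-regular, where on the right $\sigma_{ac}$ is the anti-canonical weight of $Q$ evaluated on the lift. Using that $\iota_\epsilon$ is the inverse isometry, $\innerprod{\alpha,\iota_\epsilon(\delta)}_Q=\innerprod{\pi_\epsilon\alpha,\delta}_{Q_E}=\innerprod{\alpha_\epsilon,\delta}_{Q_E}$ by the isometry property, and similarly $\innerprod{\iota_\epsilon(\delta),\alpha}_Q=\innerprod{\delta,\pi_\epsilon^\vee\alpha}_{Q_E}=\innerprod{\delta,\alpha_\epsilon^\vee}_{Q_E}$; but on $\epsilon^\perp$ one also has $\innerprod{\delta,\alpha_\epsilon}_{Q_E}$ versus $\innerprod{\delta,\alpha_\epsilon^\vee}_{Q_E}$, and the $E^\vee$-regularity is precisely what forces $\alpha_\epsilon^\vee$ (as a dimension vector attached to the weight $\sigma_{ac}$ data) to be the honest $\pi_\epsilon^\vee(\alpha)=\alpha_\epsilon$ modulo the Coxeter shift, so that $\sigma_{ac}(\iota_\epsilon\delta)=\innerprod{\alpha_\epsilon,\delta}_{Q_E}-\innerprod{\delta,\alpha_\epsilon}_{Q_E}=\sigma_{ac}^{Q_E}(\delta)$.

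The main obstacle, and the place to be most careful, is the bookkeeping around the two \emph{different} projections $\pi_\epsilon$ and $\pi_\epsilon^\vee$ and the role of $\tau$: the anti-canonical weight mixes $\innerprod{\alpha,\cdot}$ and $\innerprod{\cdot,\alpha}$, so its dimension-vector avatar involves $\alpha+\tau\alpha$, and one must check that the dual projection of $\tau\alpha$ really is $\tau_{Q_E}$ applied to $\alpha_\epsilon$ — this is where the hypothesis that $\sigma_{ac}$ is $E^\vee$-regular enters and cannot be dropped. I expect the proof to be short once the right formulation (identity of weights on $Q_E$, pulled back via $\iota_\epsilon$) is fixed, with everything reduced to the isometry property of Corollary \ref{C:OC} and the Coxeter-compatibility $\pi_\epsilon^\vee\tau=\tau_{Q_E}$, plus the linearity afforded by $E^\vee$-regularity; the danger is purely in sign/direction conventions for $\tau$ versus $\tau^{-1}$ and left versus right orthogonality.
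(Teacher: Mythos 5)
Your plan is essentially the paper's proof: the paper simply verifies the lattice identity $\pi_\epsilon^\vee(\alpha+\tau\alpha)=\pi_\epsilon(\alpha)+\tau_{Q_E}(\pi_\epsilon(\alpha))$ using the formula $\tau_{Q_E}=\pi_\epsilon^\vee\tau$ from Section \ref{S:OP}, and your ``identity of weights pulled back via $\iota_\epsilon$'' reformulation is just the unimodular-dual of that same computation, so the core of your argument is correct. The one point you get wrong is where the hypotheses enter. The cancellation you need --- that $\alpha_\epsilon^\vee=\alpha_\epsilon$, equivalently that the correction terms $\innerprod{\epsilon,\alpha}_Q\,\epsilon$ and $\innerprod{\epsilon,\alpha}_Q\,\tau\epsilon$ in $\pi_\epsilon(\alpha)$ and $\pi_\epsilon^\vee(\alpha)$ vanish --- is forced by the standing assumption of Section \ref{S:FC} that $E$ is left orthogonal to $\alpha$, i.e.\ $\innerprod{\epsilon,\alpha}_Q=0$; it has nothing to do with $E^\vee$-regularity of $\sigma_{ac}$, and your repeated claim that the regularity hypothesis is ``exactly what'' makes the projections of $\tau\alpha$ match up ``and cannot be dropped'' is a misdiagnosis. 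The lattice-level identity $\pi_\epsilon^\vee(ac)=\alpha_\epsilon+\tau_{Q_E}\alpha_\epsilon$ is purely formal (linearity of $\pi_\epsilon^\vee$ is automatic, and $\pi_\epsilon^\vee\tau\pi_\epsilon=\pi_\epsilon^\vee\tau$ since $\pi_\epsilon^\vee(\tau\epsilon)=0$); the role of $E^\vee$-regularity in the statement is only to guarantee that the functorially defined dimension vector $(ac)_E^\vee$ of $\pi_E^\vee(N)$ for general $N\in\Rep_{ac}(Q)$ agrees with the lattice projection $(ac)_\epsilon^\vee$ appearing in the conclusion, so that the lemma says something about actual representations. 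If you rewrite the justification of the key cancellation to cite $\innerprod{\epsilon,\alpha}_Q=0$ rather than regularity, your proof is complete and matches the paper's.
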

\begin{proof} We need to verify that $\R_{Q_\epsilon}^\vee(\alpha+\tau^{-1}\alpha)=\alpha_\epsilon+\tau_{Q_E}^{-1}(\alpha_\epsilon)$. But this is clear from the formula $\tau_{Q_E}^{-1}=\R_{Q_\epsilon}^\vee \tau^{-1}\iota_\epsilon$.
\end{proof}

\begin{example} \label{Ex:1c} (Example \ref{Ex:1} continued)

Let us consider the anti-canonical weight for $\Rep_\alpha(B^1)$, i.e., $\sigma_\beta=(2,1,-3)$, then $\sigma_\beta$ is $\mc{E}_\alpha$-effective.
One can verify that $\tau^{-1}\epsilon=(1,0,1)$ is an extremal ray of the cone $\Sigma_\alpha(B^1)$, so $\sigma_\beta$ is a $E^\vee$-regular weight and $\sigma_{\beta_\epsilon^\vee}=(3,-3)$.
%which is the anti-canonical weight for $\Rep_\alpha(\Theta_3)$.
It is clear that $\Mod_{\alpha_\epsilon}^{\sigma_{\beta_\epsilon^\vee}}(\Theta_3)$ is the projective plane $\mb{P}^2$. Since there is no strictly semi-stable points in $\Rep_\alpha(B^1)$, $\Mod_\alpha^{\sigma_\beta}(B^1)$ is smooth. Elements in $\Rep_\alpha\ss{\beta_\epsilon^\vee}{us}(B^1)$ satisfy that $C_1=D_1=0,B_1\neq 0$, so $\R_{Q_E}(\Rep_\alpha\ss{\beta_\epsilon^\vee}{us}(B^1))$ is a single point $N_1$ represented by $\{A=B=0; C=1\}$, which is exactly $\R_{Q_E}(\mc{E}_\alpha^{\sigma_\beta})$. Now by Theorem \ref{T:birational} and its remark, the birational morphism $\varphi_E: \Mod_\alpha^{\sigma_\beta}(B^1)\to \Mod_{\alpha_\epsilon}^{\sigma_{\beta_\epsilon^\vee}}(\Theta_3)$ contracts the curve $q(\mc{E}_\alpha^{\sigma_\beta})$ to a point $N_1$. So by the Castelnuovo's criterion (\cite[Theorem 5.7]{Ha}) the curve must be a $-1$-curve and $\varphi_E$ is the blow-up at $N_1$. This can also be seen from Theorem \ref{T:blow-up} later. It is clear from Corollary \ref{C:iso} that the chambers of $\tau\Sigma_\alpha(B^3)$ can be described as follows: the core in red gives the blow-up at one point while the dark part gives $\mb{P}^2$.

\begin{center}
\includegraphics[width=4in]{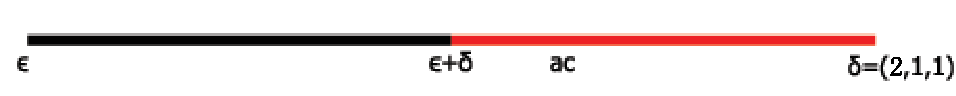}
\end{center}

Let us consider the anti-canonical weight for $\Rep_\alpha(B^2)$, i.e., $\sigma_\beta=(2,2,-1,-3)$, then $\sigma_\beta$ is $\mc{E}_\alpha$-effective for $E=E_2$. One can verify that $\tau^{-1}\epsilon=(0,1,0,1)$ is an extremal ray of the cone $\Sigma_\alpha(B^2)$, so $\sigma_\beta$ is a $E^\vee$-regular weight and $\sigma_{\beta_\epsilon^\vee}=(2,1,-3)$. For the same reason, $\Mod_\alpha^{\sigma_\beta}(B^2)$ is smooth. Elements in $\Rep_\alpha\ss{\beta_\epsilon^\vee}{us}(B^2)$ satisfy that $B_2=D_2=0$ and $A_2C_2E_2\neq 0$, so $\R_E(\Rep_\alpha\ss{\beta_\epsilon^\vee}{us}(B^2))$ is a single point $N_2$ represented by $\{B_1=D_1=0; A_1=C_1=1\}$, which is exactly $\R_E(\mc{E}_\alpha^{\sigma_\beta})$. By Theorem \ref{T:birational} and its remark, the birational morphism $\varphi_E: \Mod_\alpha^{\sigma_\beta}(B^2)\to \Mod_{\alpha_\epsilon}^{\sigma_{\beta_\epsilon^\vee}}(B^1)$ is the blow-up at $N_2$.  Note that $\varphi_E(N_2)\in \Perp E$ of $B^1$, so it does not lie on the $-1$-curve of $\Mod_\alpha^{\sigma_\beta}(B^1)$. Hence $\Mod_\alpha^{\sigma_\beta}(B^2)$ is the blow-up of $\mb{P}^2$ at two general points. We leave to interested readers to check that for $E=E_3$, $\varphi_E: \Mod_\alpha^{\sigma_\beta}(B^2)\to \Mod_{\alpha_\epsilon}^{\sigma_{\beta_\epsilon}}(\Theta_{2,2})$ is the blow-up of $\mb{P}^1\times\mb{P}^1$ at one point. Using Corollary \ref{C:iso}, one can easily verify that the chambers of $\tau\Sigma_\alpha(B^3)$ can be described as follows: the red core, and yellow, blue, green parts gives the blow-up at two general points, the blow-up at one point, $\mb{P}^1\times\mb{P}^1$, and $\mb{P}^2$ respectively.

\begin{center}
\includegraphics[width=4in]{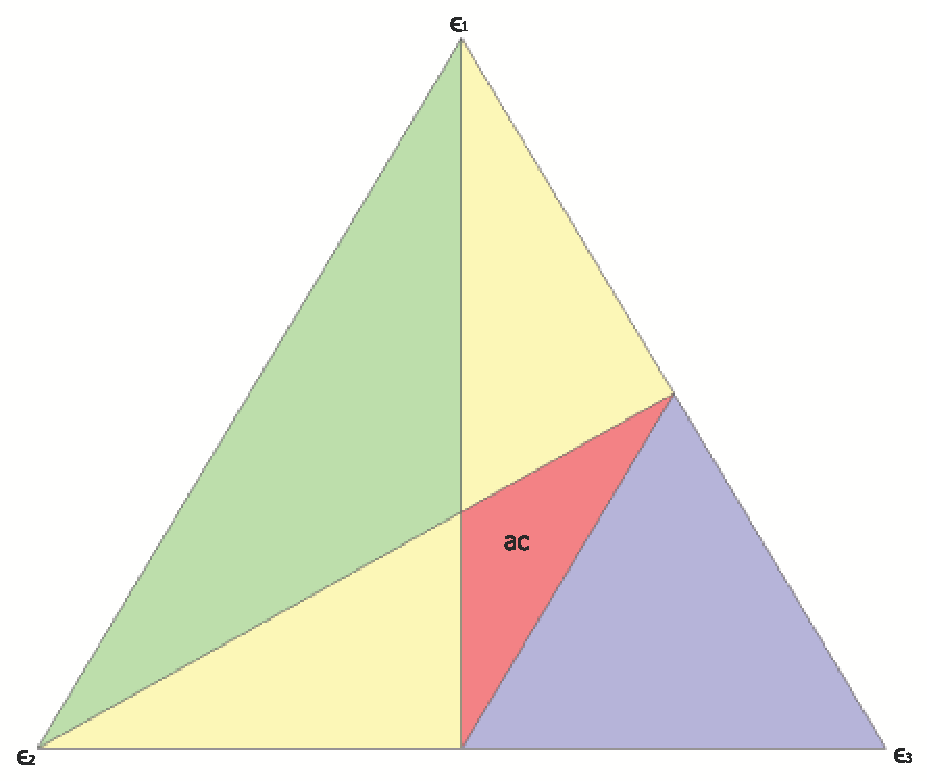}
\end{center}

Let us consider the anti-canonical weight for $\Rep_\alpha(B^3)$, i.e., $\sigma_\beta=(2,2,2,-3,-3)$, then $\mc{E}_\alpha$-effective for $E=E_{35}$. One can verify that $\tau^{-1}\epsilon=(0,0,1,1,0)$ is an extremal ray of the cone $\Sigma_\alpha(B^3)$, so $\sigma_\beta$ is a $E^\vee$-regular weight and $\sigma_{\beta_\epsilon^\vee}=(2,2,-1,-3)$. For the same reason, $\Mod_\alpha^{\sigma_\beta}(B^3)$ is smooth. Elements in $\Rep_\alpha\ss{\beta_\epsilon^\vee}{us}(B^3)$ satisfy that $B_3=D_3=0$ and $A_3C_3E_3F_3\neq 0$, so $\R_E(\Rep_\alpha\ss{\beta_\epsilon^\vee}{us}(B^3))$ is a single point $N_3$ represented by $\{B_2=D_2=0; A_2=C_2=E_2=1\}$, which is exactly $\R_E(\mc{E}_\alpha^{\sigma_\beta})$. By Theorem \ref{T:birational} and its remark, the birational morphism $\varphi_E: \Mod_\alpha^{\sigma_\beta}(B^3)\to \Mod_{\alpha_\epsilon}^{\sigma_{\beta_\epsilon^\vee}}(B^2)$ is the blow-up at $N_3$.  Note that $\varphi_E(N_3)\in\Perp E_3$ of $B^2$, so it does not lie on the $-1$-curve passing the two blow-up points. Hence $\Mod_\alpha^{\sigma_\beta}(B^3)$ is the blow-up of $\mb{P}^2$ at three general points. Using Corollary \ref{C:iso}, one can easily verify that the chambers of $\tau\Sigma_\alpha(B^3)$ can be described as follows:
the red-hexahedron core gives the blow-up at three general points, the six tetrahedrons all give the blow-up at two general points, and the rest gives either the blow-up at one point, $\mb{P}^1\times\mb{P}^1$, or $\mb{P}^2$.

\begin{center}
\includegraphics[width=4in]{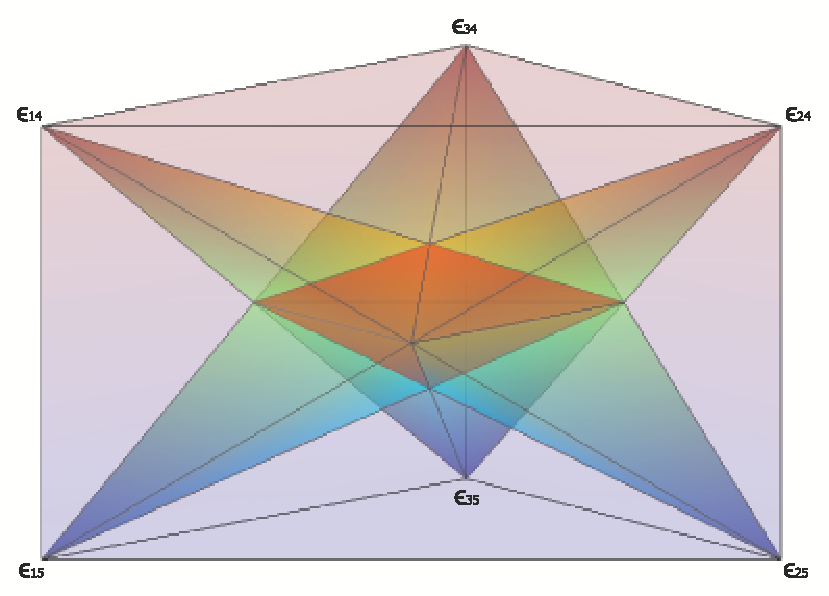}
\end{center}

The above examples are well-known. The earliest reference that we found is \cite{Hi}.

Let us consider the anti-canonical weight for $\Rep_\alpha(B^4)$, i.e., $\sigma_\beta=(2,2,2,2,2,-5)$, then $\sigma_\beta$ is $\mc{E}_\alpha$-effective. One can verify that $\tau^{-1}\epsilon=(0,0,0,1,1,1)$ is an extremal ray of the cone $\Sigma_\alpha(B^4)$, so $\sigma_\beta$ is a $E^\vee$-regular weight and $\sigma_{\beta_\epsilon^\vee}=(2,2,2,-3,-3)$. For the same reason, $\Mod_\alpha^{\sigma_\beta}(B^4)$ is smooth. Elements in $\Rep_\alpha\ss{\beta_\epsilon^\vee}{us}(Q)$ satisfy that $\rank(A_4,B_4,C_4)=1$, so $\R_E(\Rep_\alpha\ss{\beta_\epsilon^\vee}{us}(Q))$ is a single point $N_4$ represented by $\{A_3=B_3=C_3=D_3=E_3=F_3=1\}$, which is exactly $\R_E(\mc{E}_\alpha^{\sigma_\beta})$. By Theorem \ref{T:birational} and its remark, the birational transformation $\varphi_E: \Mod_\alpha^{\sigma_\beta}(B^4)\to \Mod_{\alpha_\epsilon}^{\sigma_{\beta_\epsilon}}(B^3)$ is the blow-up at $N_4$.  Note that $\varphi_E(N_4)\in \Perp E_{uv}$ of $B^3$ for all $u,v$, so it does not lie on any $-1$-curve passing two blow-up points. Hence $\Mod_\alpha^{\sigma_\beta}(B^4)$ is the blow-up of $\mb{P}^2$ at four general points. The chambers of $\tau\Sigma_\alpha(B^4)$ has a similar structure.

\end{example}

If $\Sigma_\alpha^\heartsuit(Q)$ is empty, then according to Corollary \ref{C:iso}, for almost every weight $\sigma_\beta$, we can find an exceptional $E$ such that $\varphi_E$ is an isomorphism. In this sense, we say that

\begin{definition} $(Q,\alpha)$ is {\em weakly reduced} if $\Sigma_\alpha^\heartsuit(Q)$ is non-empty. $(Q,\alpha)$ is {\em reduced} if there is a weight $\sigma_\beta$ such that $\varphi_E$ is not an isomorphism for any exceptional representation $E$. $(Q,\alpha)$ is {\em strongly reduced} if there is a weight $\sigma_\beta$ such that $\Mod_\alpha^{\sigma_\beta}(Q)$ is not a moduli for any quiver having vertices less than that of $Q$. We also call the above triple $(Q,\alpha,\sigma_\beta)$ reduced and strongly reduced. $(Q,\alpha)$ is {\em minimal} if its core is the same as its cone.
\end{definition}

It is possible that $(Q,\alpha)$ is weakly reduced but not reduced, see Example \ref{Ex:0c}. In this case, all possible GIT quotients inside the core already appear on the shell.
%In this notes, all the reduced triples are those in Example \ref{Ex:1c,Ex:3c}.
However, we do not know if strongly reduced is really stronger than reduced.

\begin{conjecture} ``Strongly reduced" and ``reduced" are equivalent.
\end{conjecture}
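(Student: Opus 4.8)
The plan is to treat the two implications separately; one is immediate and the other carries all the content. First, for \emph{strongly reduced $\Rightarrow$ reduced}: if $\sigma_\beta$ witnesses strong reducedness, then $\Mod_\alpha^{\sigma_\beta}(Q)$ is not isomorphic to any $\Mod_{\alpha'}^{\sigma'}(Q')$ with $|Q'_0|<|Q_0|$. For every exceptional representation $E$ left orthogonal to $\alpha$ (and dually for the right orthogonal ones), Theorem~\ref{T:birational} produces a birational transformation $\varphi_E\colon\Mod_\alpha^{\sigma_\beta}(Q)\dashrightarrow\Mod_{\alpha_\epsilon}^{\sigma_{\beta_E^\vee}}(Q_E)$, and by Corollary~\ref{C:OC} the quiver $Q_E$ has exactly $|Q_0|-1$ vertices and no oriented cycles. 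Hence $\varphi_E$ cannot be an isomorphism, so the same weight $\sigma_\beta$ already witnesses reducedness.

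The substantive direction is \emph{reduced $\Rightarrow$ strongly reduced}, which I would attack contrapositively: assuming $(Q,\alpha)$ is \emph{not} strongly reduced, show it is not reduced. Thus for every weight $\sigma_\beta$ we are handed a presentation $\Mod_\alpha^{\sigma_\beta}(Q)\cong\Mod_{\alpha'}^{\sigma'}(Q')$ with $Q'$ having fewer vertices, and the goal is to produce, for each such $\sigma_\beta$, an exceptional $E$ with $\varphi_E$ an isomorphism. The heart of the matter is therefore a \emph{rigidity} statement: every realization of a quiver GIT quotient as a quiver GIT quotient with strictly fewer vertices must be accounted for by orthogonal projections, i.e.\ it must factor (up to isomorphism of the defining data) through maps of the form $\varphi_E$ and $\varphi_E^{-1}$, and in particular one such $\varphi_E$ must already be an isomorphism.

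To make the plan operational I would try to reconstruct the orthogonal-projection data intrinsically from the moduli. Each $\Mod_\alpha^{\sigma_\beta}(Q)$ carries the canonical ample divisor $D_\beta$ with $H^0(\Mod_\alpha^{\sigma_\beta}(Q),D_\beta)\cong\SI_\alpha^{\sigma_\beta}(Q)$, and as $\sigma_\beta$ varies it sits inside the wall-and-chamber decomposition of the $G$-ample cone $\Sigma_\alpha(Q)$ furnished by variational GIT \cite{DH}. The idea is then: move $\sigma_\beta$ to a generic weight in its chamber; cross walls toward the shell $\partial\Sigma_\alpha^\heartsuit(Q)$; by the analysis leading to Corollary~\ref{C:iso}, the wall-crossings that reach the shell are precisely the orthogonal projections $\varphi_E$ for real roots $\epsilon$ orthogonal to $\alpha$, under cases (i) or (ii) of Proposition~\ref{P:sspreserve}. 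One would then argue that any presentation of $\Mod_\alpha^{\sigma_\beta}(Q)$ over a quiver with fewer vertices must, after this reduction, identify with a moduli over some $Q_E$, forcing the corresponding $\varphi_E$ to be an isomorphism. A numerical shadow to establish first is that $|Q_0|$, equivalently $\rank K_0(kQ)$, is recoverable from the moduli together with its birational and wall structure (through $\dim\Mod_\alpha^{\sigma_\beta}(Q)=1-\innerprod{\alpha,\alpha}_Q$, Picard data, and the family $\{D_\beta\}$), since this is what should prevent a ``for free'' reduction in the number of vertices without an orthogonal wall actually being crossed.

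The main obstacle, and the reason the statement is only conjectured, is exactly this rigidity: an abstract isomorphism $\Mod_\alpha^{\sigma_\beta}(Q)\cong\Mod_{\alpha'}^{\sigma'}(Q')$ of projective varieties need not respect any tilting-theoretic structure, so one cannot a priori identify $Q'$ with any $Q_E$, nor is it clear that distinct chambers or quivers cannot yield accidentally isomorphic models. A full proof would seem to require either a classification of all quiver presentations of the relevant (often Fano) varieties, or a functorial invariant of such a moduli space that pins down the underlying quiver up to the moves generated by orthogonal projection; absent either, I would only expect to verify the equivalence in restricted situations, e.g.\ when $\alpha$ is a ``small'' root, when the shell is explicitly understood, or when $\Mod_\alpha^{\sigma_\beta}(Q)$ is rigid in its birational class.
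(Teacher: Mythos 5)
This statement is left as an open conjecture in the paper---the author explicitly writes that he does not know whether ``strongly reduced'' is really stronger than ``reduced''---so there is no proof of record to compare your attempt against. Your first paragraph does correctly dispose of the easy implication: if $\sigma_\beta$ witnesses strong reducedness, then no $\varphi_E$ can be an isomorphism, because Theorem \ref{T:birational} lands $\varphi_E$ in $\Mod_{\alpha_\epsilon}^{\sigma_{\beta_E^\vee}}(Q_E)$ and by Corollary \ref{C:OC} the quiver $Q_E$ has $|Q_0|-1$ vertices. That much is sound, and it is implicit in the way the author poses the question.

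For the converse, what you have written is a research plan rather than a proof, as you yourself acknowledge, and the plan has a concrete defect beyond the rigidity issue you flag. Your proposed ``numerical shadow''---that $|Q_0|=\rank K_0(kQ)$ should be recoverable from the moduli space together with its divisor and birational data, so as to forbid a ``for free'' drop in the number of vertices---is contradicted by the paper's own examples: the blow-up of $\B{P}^2$ at four general points arises both as $\Mod_\alpha^{\sigma_\beta}(B^{4,1})$ with seven vertices (Example \ref{Ex:2c}) and as $\Mod_\alpha^{\sigma_\beta}(B^4)$ with six (Example \ref{Ex:1c}), so the vertex count is not an invariant of the underlying variety; and if instead you intend to use the wall structure of $\Sigma_\alpha(Q)$, that is data attached to the pair $(Q,\alpha)$ rather than to the moduli space, which makes the step circular. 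The genuine content of the conjecture is precisely the rigidity statement you isolate in your second paragraph---that every presentation of $\Mod_\alpha^{\sigma_\beta}(Q)$ over a quiver with fewer vertices is accounted for by some orthogonal projection $\varphi_E$ being an isomorphism---and neither your outline nor the paper supplies an argument for it. The statement therefore remains open.
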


\begin{conjecture} If $(Q,\alpha)$ is reduced, then the weak core coincides with the core.
\end{conjecture}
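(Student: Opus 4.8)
That $\Sigma_\alpha^\heartsuit(Q)$ is contained in the weak core is automatic, since the weak core is cut out by exactly the sub-collection of the half-space conditions defining the core that are indexed by the real roots $\epsilon<\alpha$; the content is therefore the reverse inclusion, which I would establish by contraposition. Suppose the weak core strictly contains the core. Then some real Schur root $\epsilon\not<\alpha$ orthogonal to $\alpha$ contributes a half-space not already implied by the half-spaces of the roots $<\alpha$; by the symmetry between left and right orthogonality (for instance via $Q^{\op}$) we may take $\epsilon$ to be \emph{left} orthogonal to $\alpha$, and we let $E$ be the corresponding exceptional representation, so $\innerprod{\epsilon,\alpha}_Q=0$ and $E\perp M$ for general $M\in\Rep_\alpha(Q)$. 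The aim is to show that such an $\epsilon$ can exist only when $(Q,\alpha)$ is not reduced; in fact I would attempt the stronger statement that a reduced pair admits \emph{no} real root orthogonal to $\alpha$ outside $\{\gamma:\gamma<\alpha\}$, which makes the two cones literally equal.

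Two reductions isolate the crux. First, when $\epsilon\not<\alpha$ the exceptional set $\D{E}_\alpha$ is empty: any representation in $\D{E}_\alpha$ contains $E$ as a subrepresentation (as noted before Lemma \ref{L:positive}), which would force $\epsilon\leqslant\alpha$. Hence $E$ is merely left orthogonal, not left exceptional, to $\alpha$, so by Theorem \ref{T:birational} the transformation $\varphi_E$ contracts nothing at any weight, and the only possible obstruction to its being an isomorphism at $\sigma_\beta$ is the presence of $\sigma_\beta$-semistable representations outside $\Rep_\alpha(E^\perp)$. Second, the model case is $E=P_v$ with $\alpha(v)=0$: here $\tau E=0$, hence $\ext_Q(\tau\epsilon,\beta)=0$ for every $\beta$, so one of conditions (i), (ii) of Proposition \ref{P:sspreserve} holds at every weight and $\varphi_{P_v}$ is an isomorphism everywhere by Corollary \ref{C:iso} (concretely, $\pi_{P_v}$ just deletes the vertex $v$). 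Thus a reduced pair must have full support, which kills the ``obvious'' large orthogonal roots.

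The remaining, and principal, difficulty is the non-projective large $\epsilon$. One wants to show that under the reducedness hypothesis the half-space $\{\sigma_\beta:\innerprod{\epsilon,\beta}_Q>0\}$ contributes nothing to the weak core, and I see two plausible routes. The first is to extract from $\epsilon$ a genuine real Schur root $\epsilon'<\alpha$, also left orthogonal to $\alpha$, whose Euler pairing dominates that of $\epsilon$ throughout the $G$-ample cone, so that $\innerprod{\epsilon',\cdot}_Q>0$ already forces $\innerprod{\epsilon,\cdot}_Q>0$ there; the natural candidates for $\epsilon'$ are the Schur roots that arise when $\epsilon$ is resolved into a complete exceptional sequence of the relevant perpendicular category, truncated so as to stay below $\alpha$. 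The second is to prove directly that $\ext_Q(\tau\epsilon,\beta)=0$ for every weight $\sigma_\beta$ in $\Sigma_\alpha^\heartsuit(Q)$: a reducedness witness must make $\varphi_E$ not an isomorphism and, lying in the core, must therefore fail condition (ii) of Proposition \ref{P:sspreserve} with $\innerprod{\epsilon,\beta}_Q>0$, so such a witness could not exist, excluding $\epsilon$ outright. Either way the obstacle is the same — controlling the interaction between the arithmetic of the (possibly infinite) set of real roots orthogonal to $\alpha$ and the general subrepresentation structure of $\alpha$ — and I would expect a clean proof either to require, or to be much easier given, a positive answer to the Problem on finiteness of the real roots orthogonal to $\alpha$, which would make $\Sigma_\alpha^\heartsuit(Q)$ rational polyhedral and reduce the redundancy statement to a finite linear-programming question about the orthogonal root subsystem.
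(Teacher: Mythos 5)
This statement is one of the paper's open \emph{conjectures}: the paper offers no proof of it, so there is nothing to compare your argument against, and your proposal should be judged on its own. As it stands it is a strategy, not a proof, and the gap is the one you yourself name. Your preliminary reductions are sound: the inclusion of the core in the weak core is indeed automatic; a representation in $\D{E}_\alpha$ contains $E$ as a subrepresentation, so $\epsilon\nless\alpha$ does force $\D{E}_\alpha=\emptyset$; and the observation that $\alpha(v)=0$ makes $\varphi_{P_v}$ an isomorphism for every weight (hence kills reducedness) correctly disposes of the projective case and shows a reduced pair has full support. But the entire content of the conjecture lives in the case you defer --- a non-projective real root $\epsilon$ orthogonal to $\alpha$ with $\epsilon\nless\alpha$ --- and there you offer two candidate routes without executing either. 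That is precisely the point at which the paper, too, stops (note that the author places the finiteness Problem and these conjectures side by side as open questions).

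Two further issues would need repair even to turn your sketch into a plan. First, your second route is circular as phrased: you propose to ``prove directly that $\ext_Q(\tau\epsilon,\beta)=0$ for every weight in the core,'' but that vanishing is the unproven crux, not something extracted from reducedness; moreover the reducedness witness $\sigma_\beta$ is not required by the definition to lie in the core, so ``lying in the core, must therefore fail condition (ii) of Proposition \ref{P:sspreserve}'' has no basis. (There is also the hypothesis of $E^\vee$-regularity in Proposition \ref{P:sspreserve} and Corollary \ref{C:iso}, which you would need to verify or circumvent before concluding that failure of (i) and (ii) follows from $\varphi_E$ not being an isomorphism.) Second, you conflate the conjecture with the stronger claim that a reduced pair admits \emph{no} orthogonal real root outside $\{\gamma:\gamma<\alpha\}$; the conjecture only asserts that the half-spaces of such roots are redundant on the weak core, and proving the stronger statement may well be harder or simply false. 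A correct write-up should either justify the strengthening or argue redundancy directly, e.g.\ by exhibiting, for each large $\epsilon$, small orthogonal roots whose half-space conditions imply $\innerprod{\epsilon,\cdot}_Q>0$ on the weak core --- which is your first route, again not carried out.
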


Starting with a triple $(Q,\alpha,\sigma_\beta)$ as in the beginning of this section, the first step to study the moduli space $\Mod_\alpha^{\sigma_\beta}(Q)$ is to make the triple reduced. This can be done as follows. Find a real root $\epsilon$ orthogonal to $\alpha$, if $\beta$ falls into the two situation of Proposition \ref{P:sspreserve}, then we project it. Although the second criterion is sharp in certain cases (Example \ref{Ex:1c}), it is just a sufficient condition in general. In practice, it may be not as useful as elementary arithmetic consideration. However, the main difficulty is that we do not know a good algorithm to detect real roots orthogonal to $\alpha$.

\begin{example} \label{Ex:2c} (Example \ref{Ex:2} continued)

What happened if we take the anti-canonical weight again for $\Rep_\alpha(B^{4,1})$? In this case, $\sigma_\beta=(2,2,2,2,2,0,-5)$, then it has strictly semi-stable points this time. One can verify that $\ext_Q(\beta,\tau^{-1}\epsilon_1)=0$, so by Corollary \ref{C:iso}, $\Mod_\alpha^{\sigma_\beta}(B^{4,1})$ is still the blow-up of $\mb{P}^2$ at four general points. We leave it for interested readers to verify that the core is empty, i.e., $(Q,\alpha)$ is not reduced, so the only GIT quotients for $\Rep_\alpha(B^{4,1})$ are the blow-up of $n$ points of $\mb{P}^2$, where $n\leqslant 4$, and $\mb{P}^1\times\mb{P}^1$.
\begin{conjecture}
Those are all the possible 2-dimensional GIT quotients in the quiver setting.
\end{conjecture}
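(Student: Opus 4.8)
The plan is to combine the reduction steps recalled in the Introduction with the orthogonal-projection machinery of Sections~\ref{S:FC}--\ref{S:SC} to reduce an arbitrary two-dimensional quotient to a short list of ``base'' cases, and then to classify those. First I would dispose of the strictly semi-stable locus: if $\alpha$ is strictly $\sigma_\beta$-semi-stable with $\sigma_\beta$-stable decomposition $\alpha=c_1\alpha_1\dot+\cdots\dot+c_r\alpha_r$, then by \cite[Theorem~3.20]{DW2} the quotient is the symmetric product $\prod_i\mathrm{Sym}^{c_i}\Mod_{\alpha_i}^{\sigma_\beta}(Q)$; for this to be two-dimensional either one factor is a surface (handled by the induction below) or all factors are points or rational curves, leaving only $\mathrm{Sym}^2\B{P}^1=\B{P}^2$ and $\B{P}^1\times\B{P}^1$. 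The one-dimensional base case gives only $\B{P}^1$. So we may assume $\alpha$ is a $\sigma_\beta$-stable non-isotropic imaginary Schur root with $\innerprod{\alpha,\alpha}_Q=-1$, and, after perturbing $\sigma_\beta$ inside its chamber (which changes the smooth surface only by the wall-crossings of Corollary~\ref{C:iso}, i.e.\ by isomorphisms and point blow-ups), that $S:=\Mod_\alpha^{\sigma_\beta}(Q)$ is a smooth projective \emph{rational} surface.

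The second step iterates the orthogonal projections. Whenever there is a real root $\epsilon$ orthogonal to $\alpha$ for which $\varphi_E$ is an isomorphism (situation (i) or (ii) of Proposition~\ref{P:sspreserve}), replace $(Q,\alpha,\sigma_\beta)$ by $(Q_E,\alpha_\epsilon,\sigma_{\beta_E^\vee})$, which has one fewer vertex; otherwise, when $\varphi_E$ is a non-trivial projection, Theorem~\ref{T:birational} together with Theorem~\ref{T:blow-up} realizes $S$ as the blow-up of $\Mod_{\alpha_\epsilon}^{\sigma_{\beta_E^\vee}}(Q_E)$ along $\tilde q_E\pi_E(\D{E}_\alpha^{\sigma_\beta})$, which on a surface is a single reduced point, so $\varphi_E$ contracts a $(-1)$-curve. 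Since $|Q_0|$ drops at each step, this terminates, and by induction on $|Q_0|$ it remains to (i) classify the ``minimal'' $(Q,\alpha)$ admitting no further nontrivial projection and with two-dimensional generic quotient, and (ii) show that the admissible sequences of point blow-ups built over them stay within the asserted list.

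For (i), the absence of a usable orthogonal real root together with $\alpha$ being a Schur root with $\innerprod{\alpha,\alpha}_Q=-1$ should force $(Q,\alpha)$ to be $(\Theta_3,(1,1))$, with quotient $\B{P}^2$, or $(\Theta_{2,2},(1,1,1))$, with quotient $\B{P}^1\times\B{P}^1$; both are minimal by the elementary Euler-form computations in Example~\ref{Ex:1}. Since $S$ is rational, its Mori-minimal model is $\B{P}^2$ or a Hirzebruch surface $\B{F}_n$, so the real content is that a quiver-minimal surface is already Mori-minimal (every $(-1)$-curve is $\tilde q_E\pi_E(\D{E}_\alpha^{\sigma})$ for some $E$ and some wall) and that among the $\B{F}_n$ only $\B{F}_0=\B{P}^1\times\B{P}^1$ occurs, $\B{F}_2$ being excluded by the incompatibility of its effective cone with Schofield's semi-invariant divisor $D_\beta$. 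Turning ``no usable orthogonal root $\Rightarrow$ one of these two'' into a proof is essentially the surface-level instance of the open Problem of this section on the structure of the real roots orthogonal to $\alpha$, and I expect it to be the first main obstacle.

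For (ii), an inverse orthogonal projection adds a vertex and, by Theorem~\ref{T:blow-up}, realizes the new surface as a point blow-up of the old one; since $\B{P}^1\times\B{P}^1$ blown up at a point is $Bl_2\B{P}^2$ and $Bl_n\B{P}^2$ blown up at a general point is $Bl_{n+1}\B{P}^2$, the list is closed under admissible blow-ups exactly as long as the centers stay in general position and the process terminates. The second and, I expect, harder obstacle is to bound the number of iterations by $4$, i.e.\ to show no quiver surface is $Bl_{\ge 5}\B{P}^2$. I would attack this by combining the comparison of $D_\beta$ with the anti-canonical class (Theorem~\ref{T:IAD}, Lemma~\ref{L:acproj}, and the discussion there) to show $S$ is a (weak) del Pezzo surface whose degree $K_S^2$ --- read off from $\innerprod{\alpha,\alpha}_Q=-1$ and the numerics of the exceptional sets $\D{E}_\alpha$ --- is at least $5$, with the combinatorics of the orthogonal exceptional real roots: their Euler-form incidences would have to realize the $E_n$-type configuration of $(-1)$-curves on $Bl_n\B{P}^2$ inside a quiver Euler lattice, which fails for $n\ge 5$. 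The cases $n=1,2,3,4$ are exhibited by $B^1,B^2,B^3,B^4$ in Example~\ref{Ex:1}, so with (i) this closes the classification.
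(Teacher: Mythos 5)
This statement appears in the paper as a \emph{conjecture}: the author offers no proof, only the supporting evidence of Examples \ref{Ex:1c} and \ref{Ex:2c}. So there is no argument in the paper to compare yours against, and the question is whether your outline actually closes the conjecture. It does not: the two steps you yourself flag as ``obstacles'' are precisely the open content of the conjecture, and neither is resolved by the machinery available in the paper. First, the base of your induction requires classifying the pairs $(Q,\alpha)$ with $\innerprod{\alpha,\alpha}_Q=-1$ admitting no real root $\epsilon$ orthogonal to $\alpha$ usable for a projection. The paper gives no structural control over such roots --- it poses as an open Problem even the finiteness of the set of real roots orthogonal to $\alpha$ --- and Example \ref{Ex:0} together with the $\Theta_{2,2}$ computation in Example \ref{Ex:1} exhibits both failure modes (orthogonal roots that are not exceptional to $\alpha$, and dimension vectors with no orthogonal exceptional representation at all). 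Nothing in your sketch rules out a minimal pair with many vertices whose quotient is some surface outside the asserted list.

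Second, the bound of four blow-ups is not established. Your proposed route --- show that $\Mod_\alpha^{\sigma_{ac}}(Q)$ is a del Pezzo surface of degree $K_S^2\geq 5$ --- is close to circular: for a del Pezzo surface the degree is $9-n$ where $n$ is the number of blow-down steps, so bounding the degree below by $5$ \emph{is} the assertion that $n\leq 4$, and the phrase ``read off from $\innerprod{\alpha,\alpha}_Q=-1$ and the numerics of the exceptional sets'' does not supply a computation of $K_S^2$ (equivalently of $h^0(D_{ac})=\dim\SI_\alpha^{\sigma_{ac}}(Q)$, for which one would need an independent lower bound of $6$). There are further unverified hypotheses propagating through your induction: Theorem \ref{T:blow-up} needs assumption $\smiley$, $E^\vee$-regularity of $\beta$, and the single-wall condition; the Fano statement needs $\circledast$; and the claim that successive blow-up centers stay in general position is checked in the paper only case by case (e.g.\ $\varphi_E(N_3)\in E_3(B^2)^\perp$ in Example \ref{Ex:1c}). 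Your reduction framework is the natural one and matches how the paper's examples are organized, but as written it reformulates the conjecture rather than proves it.
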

\end{example}

\begin{example} \label{Ex:0c} (Example \ref{Ex:0} continued) The cone $\Sigma_\alpha(Q)$ is generated by two extremal rays $(0,1,1)$ and $(2,2,3)$. With a little effort, one can show that the core $\Sigma_\alpha^\heartsuit(Q)$ is cut out by a single real root $(0,2,1)$. So the core is generated by two extremal rays $(0,1,1)$ and $(2,3,4)$. But an elementary arithmetic argument can show that the quotient is constantly $\mb{P}^2$.
\end{example}

\begin{example} \label{Ex:SB}
It is possible that $\varphi_E$ is not a morphism, i.e., strictly birational. Take any weight in the green area of $\Sigma_\alpha(B^2)$, then we see from the picture that $\varphi_E$ induces a birational transformation $\mb{P}^2 \to \mb{P}^1\times\mb{P}^1$, which is strict.
\end{example}

One can blame this example for the birational transformation crosses too many walls. In fact, we will see in the next section that crossing a part of shell which does not intersect any other wall behaves quite agreeable and can be described explicitly.

\section{The Shell-crossing} \label{S:SC}

A fundamental problem in the variational GIT is to describe how the quotients change when the weight crosses a single wall. As shown in the example \ref{Ex:SB}, the birational transformation $\varphi_E$ constructed in Theorem \ref{T:birational} can cross several walls. However, we can always appropriately choose weights such that a {\em piece} of the shell is the only wall being crossed.

Remarkably, Michael Thaddeus gave an beautiful solution to the above problem in \cite{T}. Let us recall some of his general results. Let $G$ be a reductive group over $k$ acting on a quasi-projective variety over $k$. Suppose that $L_+$ and $L_-$ are two ample $G$-linearization such that if $L(t)=L_+^t L_-^{1-t}$ for $t\in[-1,1]$, there exists $t_0\in(-1,1)$ such that $X^{\rm ss}(t)=X^{\rm ss}(+)$ for $t>t_0$ and $X^{\rm ss}(t)=X^{\rm ss}(-)$ for $t<t_0$. This certainly happens when we cross a wall only at $L(t_0)$ through the line segment $L_+L_-$ in the $G$-ample cone.
We denote
\begin{align*}
  X^{\pm} &:= X^{\rm ss}(\pm)\setminus X^{\rm ss}(\mp);\\
  X^0 &:= X^{\rm ss}(0)\setminus (X^{\rm ss}(+)\cup X^{\rm ss}(-)).
\end{align*}
It is easy to see that $X^{\pm}\subset X^0$ \cite[Lemma 4.1]{T}.

Let $x\in X^0$ be a smooth point such that $G\cdot x$ is closed in $X^{\rm ss}(0)$ and $G_x\cong k^*$. Since $x\in X^{\rm ss}(0)$, $G_x$ acts trivially on the fibre $(L_0)_x$. Assume that it acts non-trivially on $(L_+)_x$ with some negative weight $v_+$, then it acts on $(L_-)_x$ with some positive weight $v_-$. We require that two weights are coprime: $(v_+,v_-)=1$. It is shown that over a neighborhood of $x$ in $X^0\dslash{0}G$, $X^{\pm}\dslash{\pm}G$ are locally trivial fibrations with fibre the weighted projective space $\mb{P}(|\omega_i^{\pm}|)$. If furthermore all $\omega_i^{\pm}=\pm\omega$ for some $\omega$, then

\begin{lemma} \cite[Theorem 4.8]{T} \label{L:VGIT} Over a neighborhood of $x$ in $X^0\dslash{0}G$, $X^{\pm}\dslash{\pm}G$ are naturally isomorphic to the projective bundles $\mb{P}W^{\pm}$, their normal bundles are naturally isomorphic to $\pi_{\pm}^* W^{\mp}(-1)$, and the blow-ups of $X\dslash{\pm}G$ at $X^{\pm}\dslash{\pm}G$, and of $X\dslash{0}G$ at $X^0\dslash{0}G$, are all naturally isomorphic to the fibred product $X\dslash{-}G\times_{X\dslash{0}G}X\dslash{+}G$.
\end{lemma}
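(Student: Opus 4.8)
Since this is Thaddeus' \cite[Theorem 4.8]{T}, I only outline the strategy. The plan is to reduce the statement to an explicit linear model by Luna's \'etale slice theorem, verify everything there by hand, and descend.

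First I would apply the \'etale slice theorem at $x$. As $x$ is a smooth point of $X^0$ with $G\cdot x$ closed in $X^{\rm ss}(0)$ and $G_x\cong k^*$, there is a $G_x$-stable locally closed affine slice $S\ni x$ for which $G\times_{G_x}S\to X$ is \'etale onto a saturated open neighbourhood of $G\cdot x$; after shrinking $S$ one arranges that this works simultaneously for the three linearizations $L_+$, $L_-$, $L_0$. Passing to $G\times_{G_x}S$, and then by a further standard reduction to the tangent space $T=T_xS$ with its linear $k^*$-action, the three linearizations become characters of $k^*$ whose weights on the relevant fibres are $v_+<0$, $v_->0$ and $0$. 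Under the hypotheses $(v_+,v_-)=1$ and $\omega_i^\pm=\pm\omega$, after rescaling one has $T=T^+\oplus T^0\oplus T^-$ with $k^*$ acting by weight $+1$ on $T^+$, $-1$ on $T^-$, and trivially on $T^0$.

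Second, I would compute the three GIT quotients of this linear model directly from the Hilbert--Mumford criterion: $T^{\rm ss}(+)=\{(\xi^+,\xi^0,\xi^-)\mid\xi^+\neq 0\}$, symmetrically $T^{\rm ss}(-)=\{\xi^-\neq 0\}$, and $T^{\rm ss}(0)=T$, so that locally $X^0\dslash{0}G$ is the affine quotient $\Spec k[T]^{k^*}$ and the flip locus $X^0$ corresponds to $\{\xi^+=\xi^-=0\}$. The $k^*$-quotient of $\{\xi^+\neq 0\}$ is a vector bundle with fibre $T^-$ over $\B{P}(T^+)$ --- an \emph{ordinary} projective space, and this is exactly where the equal-weights hypothesis is used, since otherwise one would only get a weighted projective space. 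Pulled back over the base, this identifies $X^+\dslash{+}G$ with the projective bundle $\B{P}W^+$, and reading the $\D{O}(-1)$-twist along $\B{P}(T^+)$ off the Euler sequence gives its normal bundle $\pi_+^*W^-(-1)$ inside $X\dslash{+}G$; the case $(-)$ is symmetric.

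Finally, I would identify $X\dslash{-}G\times_{X\dslash{0}G}X\dslash{+}G$ with the common blow-up. In the linear model this is the classical fact that $\{([\xi^+],[\eta^-],\,\xi^+\otimes\eta^-,\,\xi^0)\}$ maps isomorphically onto the blow-up of $\Spec k[T]^{k^*}$ along each of the two loci $\B{P}W^\pm$, with exceptional divisor $\B{P}W^+\times\B{P}W^-$. Descending this through the \'etale slice requires that GIT quotients, projective bundles, normal bundles of smooth centres, and blow-ups along smooth centres all commute with \'etale base change and glue --- which they do --- and this descent, together with the bookkeeping that one slice can be chosen compatibly with all three linearizations, is the only genuinely technical step; I expect that compatibility point, rather than any of the model computations, to be the main obstacle.
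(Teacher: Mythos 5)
The paper offers no proof of this lemma at all---it is imported verbatim as Thaddeus's Theorem 4.8 from \cite{T}---and your sketch correctly reconstructs the strategy of that source: Luna's \'etale slice at the closed orbit with $G_x\cong k^*$, reduction to the linear weight-space model $T^+\oplus T^0\oplus T^-$ (where the hypothesis $\omega_i^\pm=\pm\omega$ is exactly what turns the weighted projective fibres into honest projective bundles), explicit identification of the three quotients and of the fibred product with the common blow-up, and descent through the slice. Since this matches the cited argument in both structure and in where the real technical work lies (the compatibility of one slice with all three linearizations), there is nothing to add.
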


What is $X^{\pm}$ and $X^0$ in our setting? Let us first make one simple observation on the walls of $\Sigma_\alpha(Q)$. We write $H_\omega$ for the hyperplane defined by $\innerprod{-,\omega}_Q=0$, then the set of all walls is contained in the union of $H_\omega$ for all indivisible roots $\omega<\alpha$. Let $C^+,C^-$ be two adjacent chambers with $W$ being a common wall whose supporting hyperplane is given by $\innerprod{-,\omega}_Q=0$ for some indivisible root $\omega$. Here, we assume adjacency in a quite strong sense that $W$ has codimension one. In this case, the strictly semi-stable representations on $W$ must have a subrepresentation of dimension an integral point in the cone spanned by $\omega$ and $\alpha-\omega$. For simplicity, from now on let us assume that \begin{align*}
\alpha,\alpha-\omega \text{ are indivisible, and } 2\omega\nless\alpha,2(\alpha-\omega)\nless\alpha. \tag{$\smiley$}
\end{align*}
Note that with the assumption that $\alpha$ is indivisible, there is no strictly semi-stable points in $C^+$ and $C^-$, so in particular the corresponding quotients are smooth.

Let $\beta_{\pm}$ be an interior point of $C^{\pm}$, and $\beta_0$ be the intersection of $\beta_+\beta_-$ with $W$.
By definition, there is a strictly $\sigma_{\beta_0}$-semi-stable representation $M$ with a quotient representation $N$ of dimension $\gamma$ such that $\innerprod{\beta_0,\gamma}_Q=0$ but $\innerprod{\beta_+,\gamma}_Q\innerprod{\beta_-,\gamma}_Q<0$.
By assumption $\smiley$, we can assume $\gamma=\omega$ without loss of generality.
By our convention that $\innerprod{\beta_+,\omega}_Q>0$, $M$ is $\sigma_{\beta_+}$-stable but $\sigma_{\beta_-}$-unstable. We conclude that
\begin{align*}
  X^{+} &= \Rep_\alpha\ss{\beta_+}{ss}(Q)\cap\Rep_{\alpha\twoheadrightarrow\omega}(Q):=\Rep_{\alpha\twoheadrightarrow\omega}\ss{\beta_+}{ss}(Q),\\
  X^{-} &= \Rep_\alpha\ss{\beta_-}{ss}(Q)\cap\Rep_{\omega\hookrightarrow\alpha}(Q):=\Rep_{\omega\hookrightarrow\alpha}\ss{\beta_-}{ss}(Q).
\end{align*}
With a little effort, one can show that $X^0 = \Rep_\alpha\ss{\beta_0}{ss}(Q)\cap\Rep_{\omega\oplus\alpha-\omega}(Q)$, but we do not need this in the future.
Readers shouldn't find any difficulty to formulate these sets without assumption $\smiley$. Our main interest is the case when $\omega=\epsilon$ is a real Schur root (right) orthogonal to $\alpha$. Then $W=S$ is called a piece of the shell, and by our convention $C^+$ is inside the core. In view of the next lemma, it is usually an uninteresting case when $\epsilon$ is not exceptional to $\alpha$ (Definition \ref{D:exc}). So we assume that $\epsilon$ is exceptional to $\alpha$.

\begin{lemma} \label{L:pm} Assume that $\alpha-\epsilon$ and $\alpha$ are indivisible.
\begin{enumerate}
\item[(i)] If $\epsilon$ is orthogonal but not exceptional to $\alpha$, then both $\Rep_{\alpha\twoheadrightarrow\epsilon}\ss{\beta_+}{ss}(Q)$ and $\Rep_{\epsilon\hookrightarrow\alpha}\ss{\beta_-}{ss}(Q)$ are empty.
\item[(ii)] If $\epsilon$ is exceptional to $\alpha$, then
\begin{align*}\Rep_{\alpha\twoheadrightarrow\epsilon}\ss{\beta_+}{ss}(Q) & =\Rep_\alpha\ss{\beta_+}{ss}(Q)\cap\mc{E}_\alpha=\mc{E}_\alpha^{\sigma_\beta},\\
\Rep_{\epsilon\hookrightarrow\alpha}\ss{\beta_-}{ss}(Q) & =\Rep_\alpha\ss{\beta_-}{ss}(Q)\cap\GL_\alpha\cdot\R_E(\mc{E}_\alpha)= \GL_\alpha\cdot\R_E(\mc{E}_\alpha^{\sigma_\beta}).
\end{align*}
\end{enumerate}
\end{lemma}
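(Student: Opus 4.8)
I abbreviate $\sigma_\beta:=\sigma_{\beta_+}$, so that $\innerprod{\epsilon,\beta}_Q>0$, $\sigma_\beta(\epsilon)<0$ and $\sigma_{\beta_-}(\epsilon)>0$; everything will be extracted from Corollary \ref{C:exc}, King's criterion (Lemma \ref{L:King}) and Lemma \ref{L:positive}, and the assertions about quotient representations follow by running the same arguments for subrepresentations with the dual projection $\tilde\pi_E^\vee$ in place of $\tilde\pi_E$, so I discuss only the ``sub'' side. Two preliminary remarks are used repeatedly. First, by Lemma \ref{L:codimgen} (and its dual) the varieties $\Rep_{\epsilon\hookrightarrow\alpha}(Q)$ and $\Rep_{\alpha\twoheadrightarrow\epsilon}(Q)$ are irreducible, while the loci cutting out $\Rep_{\epsilon\hookrightarrow\alpha}\ss{\beta_+}{ss}(Q)$ and $\Rep_{\alpha\twoheadrightarrow\epsilon}\ss{\beta_-}{ss}(Q)$ inside them are $\GL_\alpha$-stable and open, so each of these two sets is either empty or dense in its ambient variety. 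Second, for \emph{any} representation $L$ with $\dim_Q L=\epsilon$ one has $\hom_Q(E,L)\geq 1$ and $\hom_Q(L,E)\geq 1$, since $1=\innerprod{\epsilon,\epsilon}_Q=\hom_Q(E,L)-\ext_Q(E,L)$ and $\ext_Q(E,L)\geq 0$ force $\hom_Q(E,L)\geq 1$ (symmetrically for the other); hence every $M$ possessing an $\epsilon$-dimensional subrepresentation has $\Hom_Q(E,M)\neq 0$, so $M\notin E^\perp$. When $E$ is exceptional to $\alpha$, this together with Corollary \ref{C:exc} gives $\Rep_{\epsilon\hookrightarrow\alpha}(Q)=\overline{\D{E}_\alpha}$ and $\overline{\D{E}_\alpha}\cap E^\perp=\emptyset$, so $\overline{\D{E}_\alpha}\setminus\D{E}_\alpha=\Rep_{\epsilon\hookrightarrow\alpha}(Q)\setminus\Rep_\alpha(E^{\reg})$ is exactly the set of non-$E$-regular representations carrying an $\epsilon$-dimensional subrepresentation.

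For part (ii) the crucial point is that this boundary set contains no $\sigma_\beta$-semistable representation. If $M$ lies there, then by Remark \ref{R:Ereg} the universal homomorphism $E^h\to M$ is not injective, whereas generically on $\D{E}_\alpha$ one has $\hom_Q(E,-)=1$ with $E$ a subrepresentation (Corollary \ref{C:exc}); analysing how the evaluation map $E^{\oplus\hom_Q(E,M)}\to M$ must degenerate should then produce a subrepresentation of $M$ of some dimension vector $\gamma$ with $\sigma_\beta(\gamma)>0$, contradicting King's criterion. Granting this, $\Rep_\alpha\ss{\beta_+}{ss}(Q)\cap\overline{\D{E}_\alpha}=\Rep_\alpha\ss{\beta_+}{ss}(Q)\cap\D{E}_\alpha$, which with the tautology $\Rep_{\epsilon\hookrightarrow\alpha}\ss{\beta_+}{ss}(Q)=\Rep_\alpha\ss{\beta_+}{ss}(Q)\cap\Rep_{\epsilon\hookrightarrow\alpha}(Q)$ gives the first equality. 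For the second, the hypothesis $\innerprod{\epsilon,\beta}_Q>0$ puts us in case (i) of Lemma \ref{L:positive}, so $\D{E}_\alpha^{\sigma_\beta}=\D{E}_\alpha\cap\Rep_\alpha\ss{\beta_E^\vee}{ss}(Q)$, and it remains to see that a $\sigma_\beta$-semistable $M\in\D{E}_\alpha$ is automatically $E$-effective: since $M$ is $E$-regular and $E\hookrightarrow M$, right-exactness of $\tilde\pi_E$ with $\tilde\pi_E(E)=0$ gives $\tilde\pi_E(M)\cong\tilde\pi_E(M/E)$, and a quotient of $\pi_E(M)$ destabilizing it would, applying $\iota_E$ and composing with $r_M\colon M\to\tilde\pi_E(M)$ (using that $E^\perp$ is closed under quotients and that $\Hom_Q(E,-)$ vanishes on it), produce a destabilizing quotient of $M$, against the dual of Lemma \ref{L:proj_ss}. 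The quotient display is obtained the same way once one observes, from the construction of Lemma \ref{L:OP} and $\ext_Q(\epsilon,\alpha-\epsilon)=1$, that each $\tilde\pi_E(M_0)$ with $M_0\in\D{E}_\alpha$ surjects onto $E$, so $\GL_\alpha\cdot\tilde\pi_E(\D{E}_\alpha)$ avoids $\,{}^\perp E$ and the estimate $\hom_Q(L,E)\geq 1$ controls its boundary.

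For part (i), Corollary \ref{C:exc} gives $\hom_Q(\epsilon,\alpha-\epsilon)>0$, equivalently $\ext_Q(\epsilon,\alpha-\epsilon)\geq 2$. By the empty-or-dense dichotomy it is enough to show a general $M\in\Rep_{\epsilon\hookrightarrow\alpha}(Q)$ is $\sigma_{\beta_+}$-unstable. I would present $M$ as a general extension $0\to E\to M\to N\to 0$ with $N$ general of dimension $\alpha-\epsilon$; since $\Ext_Q(E,E)=0$, every homomorphism $E\to N$ lifts to $M$, and adjoining the images of all such lifts enlarges the given copy of $E$ to a proper subrepresentation $M^\sharp\subsetneq M$ whose quotient $M/M^\sharp$ is the $E$-trace-free quotient of $N$. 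A bookkeeping of dimension vectors, using $\hom_Q(\epsilon,\alpha-\epsilon)>0$ and the genericity of $N$, should give $\sigma_{\beta_+}(\dim_Q M^\sharp)>0$, so $M$ is $\sigma_{\beta_+}$-unstable by King's criterion; the dual argument, with $\sigma_{\beta_-}$ and the $E$-trace-free \emph{sub}representation of a general $(\alpha-\epsilon)$-dimensional representation, shows a general element of $\Rep_{\alpha\twoheadrightarrow\epsilon}(Q)$ is $\sigma_{\beta_-}$-unstable.

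The main obstacle is the analysis of the non-$E$-regular representations in $\overline{\D{E}_\alpha}$ used in part (ii) and the companion $E$-trace bookkeeping in part (i): in both cases one must pin down the dimension vectors that genuinely occur, so that the predicted inequality under $\sigma_{\beta_\pm}$ holds and no unwanted strictly semistable representation intervenes — and this is precisely where the indivisibility of $\alpha$ and of $\alpha-\epsilon$ enters. The remainder of the argument is formal given Corollary \ref{C:exc} and Lemma \ref{L:positive}.
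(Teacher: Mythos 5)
Your reduction to general elements (irreducibility of $\Rep_{\epsilon\hookrightarrow\alpha}(Q)$ plus openness of the semi-stable locus), and your identification of $\Rep_{\epsilon\hookrightarrow\alpha}(Q)$ with $\overline{\D{E}_\alpha}$ together with $\overline{\D{E}_\alpha}\cap E^\perp=\emptyset$, are correct and consistent with the paper. But the step you yourself flag as the ``main obstacle'' is a genuine gap, and the mechanism you propose for closing it does not work as stated. In part (i) you want to destabilize a general $M\in\Rep_{\epsilon\hookrightarrow\alpha}(Q)$ by the trace subrepresentation $M^\sharp$ and assert $\sigma_{\beta_+}(\dim_Q M^\sharp)>0$. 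Nothing in your argument controls $\dim_Q M^\sharp$: if, say, the images of all maps $E\to N$ are independent embedded copies of $E$, then $\dim_Q M^\sharp=(j+1)\epsilon$ for some $j\geqslant 1$ and $\sigma_{\beta_+}(\dim_Q M^\sharp)=(j+1)\sigma_{\beta_+}(\epsilon)<0$, so $M^\sharp$ does not violate King's criterion at all. The inequality you need cannot be extracted from $\hom_Q(\epsilon,\alpha-\epsilon)>0$ and genericity of $N$ alone, and the analogous ``degeneration of the evaluation map'' step in your part (ii) has the same defect.

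The ingredient you are missing is exactly what the paper's proof runs on: argue by contradiction, assume $M$ is $\sigma_{\beta_+}$-semi-stable, and note that in the chamber setup of Section 6 this forces $M$, $E$ and $N=M/E$ all to be semi-stable for the wall weight $\sigma_{\beta_0}$ (since $\sigma_{\beta_0}(\epsilon)=0$). The image of any nonzero map $E\to N$ is then a quotient of a $\sigma_{\beta_0}$-semi-stable object of weight $0$ and a subrepresentation of another, so its dimension vector lies in the cone spanned by $\epsilon$ and $\alpha-\epsilon$, and assumption $\smiley$ (indivisibility of $\alpha$ and $\alpha-\epsilon$, $2\omega\nless\alpha$, etc.) forces the image to be all of $E$. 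This is where the wall structure and $\smiley$ genuinely enter, not merely in ``bookkeeping'', and it is what allows the paper to iterate in case (i) ($E\hookrightarrow N$, then $\hom_Q(E,N/E)>0$ because $\D{E}_\alpha$ is empty, then repeat on $N/E$) until a contradiction is reached; part (ii) is treated by the same analysis. Your plan never invokes the $\sigma_{\beta_0}$-semi-stability of $N$, so it cannot recover this dichotomy. Two smaller inaccuracies: $E^\perp$ is not closed under quotients taken in $\Mod(Q)$, only under cokernels of morphisms between objects of $E^\perp$; and in your $E$-effectiveness argument $\tilde{\pi}_E(M)$ is not a quotient of $M$ (only $\operatorname{im}(r_M)$ is, with $\tilde{\pi}_E(M)/\operatorname{im}(r_M)$ a quotient of $E^e$), so the transfer of a destabilizing quotient needs more care than composing with $r_M$.
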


\begin{proof} (i) Suppose that $\epsilon$ is not exceptional to $\alpha$ and there is a $\sigma_{\beta_+}$-semi-stable representation in the irreducible set $\Rep_{\alpha\twoheadrightarrow\epsilon}(Q)$, then general representation there is $\sigma_{\beta_+}$-semi-stable. We can assume that such a general representation $M$ has a subrepresentation $L$ general in $\Rep_{\alpha-\epsilon}(Q)$ with $M/L=E$. Then by Corollary \ref{C:exc}, $\hom_Q(L,E)>0$. But for any morphism in $\hom_Q(L,E)$, the only possible dimension for the image is either $\epsilon$, or a fraction of $\alpha-\epsilon$ or $\alpha$, because both $E$ and $L$ are semi-stable on the wall. The latter is ruled out by assumption. For the first case, we note that $\hom_Q(L/E,E)>0$ because $\mc{E}_\alpha$ is empty. So we can apply the similar argument inductively to $L/E$. The other half can be proved similarly.

(ii) The proof is similar to (i).
\end{proof}

Let us examine the assumption of the lemma in this setting.
Fortunately, smoothness is not an issue for us. $G\cdot x$ is closed simply means that the representation is {\em polystable}, i.e., a direct sum of stable representations. The condition $G_x\cong k^*$ is necessary as shown in \cite[Counterexample 5.8]{T}. In our setting, this condition is equivalent to that the polystable representation $M$ is a direct sum of two non-isomorphic stable representations. Then its stabilizer is a $2$-dimensional torus modulo the multi-diagonally embedded $k^*$. With assumption $\smiley$, the only possible dimensions of stable summands are $\epsilon$ and $\alpha-\epsilon$, but $2\epsilon\neq\alpha$, so we don't need to worry about this. Next, the coprime condition $(v_+,v_-)=1$ is not a problem as long as the crossing is general. Finally, the condition for weights $\epsilon_i$ is always satisfied by \cite[Proposition 4.9]{T}.

%\begin{definition} Let $C_1,C_2$ be two adjacent {\em chambers} with part of shell as their common walls. A {\em general shell-crossings} is a line of general position in $C_1\cup C_2$.\end{definition}

\begin{theorem} \label{T:blow-up} Suppose that assumption $\smiley$ holds, and $S$ is a piece of shell with supporting hyperplane $\innerprod{-,\epsilon}_Q$. If $\beta$ is $E^\vee$-regular and $S$ is the only wall intersecting $\beta\tilde{\beta}_\epsilon^\vee$, then
$\varphi_E:\Mod_\alpha^{\sigma_\beta}(Q)\to\Mod_{\alpha_\epsilon}^{\sigma_{\beta_\epsilon^\vee}}(Q_E)$
is the blow-up of $\Mod_{\alpha_\epsilon}^{\sigma_{\beta_\epsilon^\vee}}(Q_E)$ along the irreducible subvariety $\tilde{q}_E\R_{Q_E}(\Rep_{\epsilon\hookrightarrow\alpha}\ss{\tilde{\beta}_\epsilon^\vee}{ss}(Q))$. If the blow-up locus is non-empty, then it has dimension $-\innerprod{\epsilon,\alpha-\epsilon}_Q$ and its exceptional locus is $q(\Rep_{\alpha\twoheadrightarrow\epsilon}\ss{\beta}{ss}(Q))$.
\end{theorem}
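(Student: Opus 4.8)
The plan is to realize $\varphi_E$ as the birational transformation of a single wall‑crossing and then apply Thaddeus's variation‑of‑GIT machinery (Lemma \ref{L:VGIT}). First I would fix an interior weight $\sigma_{\beta_-}$ of the chamber $C^-$ on the far side of the piece $S$, so that the segment $\beta\beta_-$ meets only $S$; here $X=\Rep_\alpha(Q)$, $G=\GL_\alpha$, and $L_\pm$ are the linearizations of $\sigma_{\beta_\pm}$. Since $\innerprod{\epsilon,\beta_-}_Q<0$, Proposition \ref{P:sspreserve}(i) and Corollary \ref{C:iso} identify $\Mod_\alpha^{\sigma_{\beta_-}}(Q)$ with $\Mod_{\alpha_\epsilon}^{\sigma_{(\beta_-)_\epsilon^\vee}}(Q_E)$, and because $\tau\epsilon\in\ker\pi_\epsilon^\vee$ one has $(\beta_-)_\epsilon^\vee=\beta_\epsilon^\vee$; so the target of $\varphi_E$ is exactly the ``$-$'' quotient $X\dslash{-}G$, while $X\dslash{+}G=\Mod_\alpha^{\sigma_\beta}(Q)$ is its source. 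Assumption $\smiley$ together with the indivisibility of $\alpha$ and $\alpha-\epsilon$ guarantees there are no strictly semistable points in $C^\pm$ (so both quotients are smooth) and that the only polystable wall‑orbits with stabilizer $k^*$ are those of the form $E'\oplus N'$ with $E'$, $N'$ stable of dimensions $\epsilon$ and $\alpha-\epsilon$ — this supplies all the hypotheses of Lemma \ref{L:VGIT} (coprimality of $v_\pm$ from a generic crossing, and the condition on the $\omega_i^\pm$ from the cited result of Thaddeus).

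\textbf{Key step.} The crux is the computation of the normal weight spaces at such an $x=E'\oplus N'$. By King's criterion a $\sigma_{\beta_-}$‑semistable deformation of $x$ must contain $N'$ as a subrepresentation (with $E'$ a quotient), while a $\sigma_{\beta_+}$‑stable deformation must contain $E'$ as a subrepresentation; hence $W^-\cong\Ext_Q(E',N')$ and $W^+\cong\Ext_Q(N',E')$. Now the decisive input: since $E$ is exceptional to $\alpha$, Corollary \ref{C:exc}(3) gives $\ext_Q(\epsilon,\alpha-\epsilon)=1$, so $W^-$ is one‑dimensional and $\B{P}W^-$ is a single point. Therefore $X^-\dslash{-}G\to X^0\dslash{0}G$, and hence $X\dslash{-}G\to X\dslash{0}G$, is an isomorphism; plugging this into Lemma \ref{L:VGIT} collapses the fibred product and shows that $X\dslash{+}G=\Mod_\alpha^{\sigma_\beta}(Q)$ is the blow‑up of $X\dslash{0}G\cong X\dslash{-}G\cong\Mod_{\alpha_\epsilon}^{\sigma_{\beta_\epsilon^\vee}}(Q_E)$ along $X^-\dslash{-}G$. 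That this blow‑down coincides with $\varphi_E$ is then formal: both are morphisms induced by $\pi_E$ (Theorem \ref{T:birational}) which agree on the dense open $q(\Rep_\alpha\ss{\beta}{ss}(E^\perp))$, hence everywhere.

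\textbf{Centre, exceptional locus, dimension.} By Lemma \ref{L:pm}(ii), $X^-=\GL_\alpha\cdot\tilde{\pi}_E(\D{E}_\alpha^{\sigma_\beta})$, whose image in the GIT quotient is $\tilde{q}_E\pi_E(\Rep_{\alpha\twoheadrightarrow\epsilon}\ss{\tilde{\beta}_\epsilon^\vee}{ss}(Q))$ (the semistable loci for $\beta_-$ and for $\tilde{\beta}_\epsilon^\vee$ have the same image, as passing to the wall only adds boundary orbits that get contracted), and the exceptional divisor $X^+\dslash{+}G$ equals $q(\D{E}_\alpha^{\sigma_\beta})=q(\Rep_{\epsilon\hookrightarrow\alpha}\ss{\beta}{ss}(Q))$, again by Lemma \ref{L:pm}(ii). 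Irreducibility of the centre follows from that of $\Rep_{\alpha\twoheadrightarrow\epsilon}(Q)=\Rep_{(\alpha-\epsilon)\hookrightarrow\alpha}(Q)$ (Lemma \ref{L:codimgen}, Corollary \ref{C:exc}). For the dimension, observe that since $S$ is the only wall on $\beta\tilde{\beta}_\epsilon^\vee$, $\Rep_{\alpha\twoheadrightarrow\epsilon}(Q)$ cannot lie in any $\overline{\Rep_{\gamma\oplus\alpha-\gamma}(Q)}$: such a $\gamma$ would have to satisfy $\sigma_{\tilde{\beta}_\epsilon^\vee}(\gamma)=0$, forcing $\gamma=\epsilon$ by $\smiley$, yet $\Rep_{\epsilon\oplus\alpha-\epsilon}(Q)$ has strictly larger codimension than $\Rep_{\alpha\twoheadrightarrow\epsilon}(Q)$ because $\ext_Q(\epsilon,\alpha-\epsilon)=1>0$. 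Thus Proposition \ref{P:dimblowup} applies, giving $\ext_Q(\alpha-\epsilon,\epsilon)=-\innerprod{\alpha-\epsilon,\epsilon}_Q$, and combining this with the codimension formula of Lemma \ref{L:codimgen} and a dimension count through $\tilde{q}_E\pi_E$ yields the stated value $-\innerprod{\alpha-\epsilon,\epsilon}_Q$.

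\textbf{Main obstacle.} I expect the genuinely technical point to be the faithful transcription of Thaddeus's local model into the quiver setting: pinning down the weights $v_\pm$ of the $k^*$‑action on the fibres of $L_\pm$ over a wall‑orbit $E'\oplus N'$, and verifying that the ``$-$'' fibre really collapses to a reduced point (and not to a positive‑dimensional weighted projective space) — i.e. that the identifications $W^-\cong\Ext_Q(E',N')$ and $\ext_Q(\epsilon,\alpha-\epsilon)=1$ are exactly what Lemma \ref{L:VGIT} needs. Once the local model is correctly set up, the blow‑up statement, the identification with $\varphi_E$, and the description of the centre and exceptional locus all follow from the lemmas already in hand.
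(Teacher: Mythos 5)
Your overall architecture matches the paper's: both proofs run the wall-crossing $\beta \to \tilde{\beta}_\epsilon^\vee$ through Thaddeus's Lemma \ref{L:VGIT}, identify the far-side quotient with $\Mod_{\alpha_\epsilon}^{\sigma_{\beta_\epsilon^\vee}}(Q_E)$ via Corollary \ref{C:iso}, and read off the centre and exceptional locus from Lemma \ref{L:pm} and the dimension from Proposition \ref{P:dimblowup}. Where you genuinely diverge is at the crux — how to collapse the fibred product $X\dslash{-}G\times_{X\dslash{0}G}X\dslash{+}G$ down to $X\dslash{+}G$. The paper works on the ``$+$'' side: the positive quotient is smooth, hence locally factorial, so the codimension-one set $q(\Rep_{\epsilon\hookrightarrow\alpha}\ss{\beta}{st}(Q))$ is already a Cartier divisor and blowing it up does nothing; this needs no computation of the local model at all. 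You work on the ``$-$'' side: you identify the weight spaces $W^{\pm}$ at a polystable wall point $E'\oplus N'$ with $\Ext_Q(E',N')$ and $\Ext_Q(N',E')$, show $W^-$ is a line, and conclude that $X^-\dslash{-}G\to X^0\dslash{0}G$ has point fibres so that $X\dslash{-}G\to X\dslash{0}G$ is an isomorphism. Your route costs a Luna-slice computation the paper avoids, but it buys the explicit identification of the projectivized normal bundle of the centre with $\B{P}\Ext_Q(N',E')$, which the paper's argument leaves implicit. One point in your key step needs repair: $\ext_Q(\epsilon,\alpha-\epsilon)=1$ from Corollary \ref{C:exc}(3) is a statement about \emph{general} pairs, whereas you need $\dim\Ext_Q(E',N')=1$ at \emph{every} polystable wall point; by upper semi-continuity the generic value is only a lower bound there. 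The correct justification is that both summands of a polystable point are $\sigma_{\beta_0}$-stable of weight zero and non-isomorphic, so $\Hom_Q(E',N')=0$, whence $\dim\Ext_Q(E',N')=-\innerprod{\epsilon,\alpha-\epsilon}_Q=1$ exactly. With that one-line fix your argument goes through.
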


\begin{proof} With the assumption $\smiley$, the condition of Lemma \ref{L:VGIT} holds for every $x\in X^0$, so its conclusion holds globally. Since the positive quotient is smooth and hence locally factorial, the codimension-one subvariety $\Rep_{\alpha\twoheadrightarrow\epsilon}\ss{\beta}{st}(Q)$ descends to a Cartier divisor in the positive quotient. So nothing happens after blowing it up. In the meanwhile, by Corollary \ref{C:iso} the negative quotient is isomorphic to $\Mod_{\alpha_\epsilon}^{\sigma_{\beta_\epsilon^\vee}}(Q_E)$ and under this identification $q(\Rep_{\epsilon\hookrightarrow\alpha}\ss{\tilde{\beta}_\epsilon^\vee}{ss}(Q))$ becomes $\tilde{q}_E\R_E(\Rep_{\epsilon\hookrightarrow\alpha}\ss{\tilde{\beta}_\epsilon^\vee}{ss}(Q))$. So our claim follows from Lemma \ref{L:VGIT}. For the statement on the dimension of the blow-up locus, it suffice to verify the assumption in Proposition \ref{P:dimblowup} holds. But if the assumption is not satisfied, it is clear from assumption $\smiley$ that the blow-up locus has to be empty.
\end{proof}

Without assumption $\smiley$, we can still say a lot about the birational morphism. For example, $\varphi_E$ is a Luna type stratification, whose local structure can be determined by the local quiver setting \cite{AL}, but we do not want to pursue this. At least, for all known {\em interesting} examples \cite{F2}, assumption $\smiley$ is always satisfied. To get interesting examples, we certainly hope that the blow-up is non-trivial. Roughly speaking, the number $-\innerprod{\epsilon,\alpha-\epsilon}_Q$ should lie in a proper range. When it is too small, for example zero, $S$ must be on the boundary of $\Sigma_\alpha(Q)$ if $S$ exists. When it is one, we again get a blow-up along a Cartier divisor. But when the number is too large, it is very likely that the set $\Rep_{\alpha\twoheadrightarrow\epsilon}(Q)$ is trapped into the null-cone. So in general for a $n$-dimensional quotient, we hope that $-\innerprod{\epsilon,\alpha-\epsilon}_Q$ is within the range $[2,n]$. However, this is by no means a sufficient condition. For example, one can verify that $-\innerprod{\epsilon_2,\alpha-\epsilon_2}_Q=2$ in Example \ref{Ex:2}, but the blow-up locus is empty. If the blow-up is non-trivial, then we say that piece of shell is {\em hard} in sense that one needs blow-up to cross it.

As pointed out in \cite{T}, there are basically two typical styles for single wall-crossings. One is that $X\dslash{-}G\to X\dslash{0}G$ is isomorphism while $X\dslash{+}G\to X\dslash{0}G$ is divisorial; the other is that both $X\dslash{-}G\to X\dslash{0}G$ and $X\dslash{+}G\to X\dslash{0}G$ are {\em small}, and usually this results a {\em flip} $X\dslash{-}G\dashrightarrow X\dslash{+}G$. According to Theorem \ref{T:blow-up}, a single shell-crossing almost always falls into the first category. Of course, there are other types of wall-crossings in the quiver setting. They seem to exclusively fall into the second category. For this type of examples, we refer the readers to \cite{F2}. We will pursue this in the follow-ups.

\begin{example} \label{Ex:3c} (Example \ref{Ex:3} continued) We take the weight $\sigma_\beta$ inside the core, which is generated by $(2,3,1)$ and $(3,4,3)$. It is easy to verify all the conditions of Theorem \ref{T:blow-up} are satisfied. The moduli downstairs $\Mod_{\alpha_\epsilon}^{\sigma_{\beta_\epsilon^\vee}}(Q_E)$ is clearly the projective space $\mb{P}^3$. The blow-up locus \eqref{eq:cubic} is thus a twisted cubic. Therefore, the moduli is the blow-up of $\mb{P}^3$ along a twisted cubic.
\end{example}

\begin{problem} If $\Mod_\alpha^{\sigma_\beta}(Q)$ is the blow-up of some smooth variety $X$ along an irreducible subvariety $Y$, can we always find a real root $\epsilon$ such that $X=\Mod_{\alpha_\epsilon}^{\sigma_{\beta_\epsilon^\vee}}(Q_E)$ and $Y=\tilde{q}_E\R_E(\Rep_{\epsilon\hookrightarrow\alpha}\ss{\tilde{\beta}_\epsilon^\vee}{ss}(Q))$ as in Theorem \ref{T:blow-up}?
\end{problem}

\section{Induced Ample Divisors} \label{S:IAD}

Let $q:\Rep_\alpha\ss{\beta}{ss}(Q)\to\Mod_\alpha^{\sigma_\beta}(Q)$ still be the quotient map. This is an equivariant proper map, so it maps $\GL_\alpha$-invariant closed sets to closed sets. For any $\GL_\alpha$-invariant divisor $C$ in $\Rep_\alpha\ss{\beta}{ss}(Q)$, if its support contains a $\sigma_\beta$-stable point, then it descends to a (Weil) divisor $q_*(C)$ in $\Mod_\alpha^{\sigma_\beta}(Q)$. Since we assumed that the characteristic of $k$ is $0$, this push-forward is nothing but take the invariants of the subscheme $C$ then throw away the components of codimension greater than one.
%There is an elementary way to describe this push-forward in this setting: let $C=\Sigma m_iC_i$ be the decomposition of $C$ into prime divisors, then $q_*(C)=\Sigma m_iD_i$, where $D_i=q(C_i)$ if it is a divisor, otherwise $D_i=0$.
So for any $N\in\Rep_\gamma(Q)$, if the divisor $C_N$ contains a $\sigma_\beta$-stable point, then it descends to an effective divisor $D_N^{\sigma_\beta}:=q_*(C_N\cap\Rep_\alpha\ss{\beta}{ss}(Q))$ in $\Mod_\alpha^{\sigma_\beta}(Q)$. For what follows, whenever we write $D_N^{\sigma_\beta}$, we always assume it is a divisor. The next lemma can be deduced from the sequence \eqref{eq:canproj}.

\begin{lemma} \cite[Lemma 1]{DW1} \label{L:Dplus}
If $0\to N_1\to N\to N_2\to 0$ is an exact sequence with $\innerprod{\dim_Q(N_1),\alpha}=0$, then $c_N=c_{N_1}c_{N_2}$ and thus $D_N^{\sigma_\beta}=D_{N_1}^{\sigma_\beta}+D_{N_2}^{\sigma_\beta}$.
\end{lemma}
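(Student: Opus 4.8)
The plan is to first establish the multiplicative identity $c_N=c_{N_1}c_{N_2}$ as polynomial functions on $\Rep_\alpha(Q)$, and then push forward divisors along $q$. As a preliminary observation, $c_{N_2}$ is meaningful: since $c_N$ is defined only when $\innerprod{\alpha,\dim_Q(N)}_Q=0$ and by hypothesis $\innerprod{\alpha,\dim_Q(N_1)}_Q=0$, additivity of the Euler form along $0\to N_1\to N\to N_2\to 0$ forces $\innerprod{\alpha,\dim_Q(N_2)}_Q=0$ as well. Hence for every $M\in\Rep_\alpha(Q)$ each of the middle arrows $\phi_{N_1}^M$, $\phi_N^M$, $\phi_{N_2}^M$ of the sequence \eqref{eq:canseq} is a square matrix, the $N_1$- and $N_2$-sizes summing to the $N$-size, so $c_{N_i}:=\det\phi_{N_i}^M$ are well-defined polynomials in $M$.

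The key point I would exploit is the functoriality of \eqref{eq:canproj} in the second argument. Applying $\Hom_Q(-,-)$ to the canonical projective resolution of $M$ against the short exact sequence $0\to N_1\to N\to N_2\to 0$, and using that each $P_v$ is projective — equivalently that $\Hom(M(v),-)$ is exact on vector spaces — yields a short exact sequence of two-term complexes: in degree zero, $\bigoplus_v\Hom(M(v),N_1(v))\hookrightarrow\bigoplus_v\Hom(M(v),N(v))\twoheadrightarrow\bigoplus_v\Hom(M(v),N_2(v))$, likewise in degree one, with differentials $\phi_{N_1}^M$, $\phi_N^M$, $\phi_{N_2}^M$. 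Because $N_1$ is a subrepresentation, its structure maps are the restrictions of those of $N$, so the connecting differential of \eqref{eq:canproj} carries $\bigoplus_v\Hom(M(v),N_1(v))$ into $\bigoplus_a\Hom(M(ha),N_1(ta))$, restricting there to $\phi_{N_1}^M$ and inducing $\phi_{N_2}^M$ on the cokernels. Choosing for each vertex $v$ a basis of $N(v)$ whose first $\dim N_1(v)$ vectors span $N_1(v)$, the matrix of $\phi_N^M$ becomes block upper-triangular, of the form $\sm{\phi_{N_1}^M & * \\ 0 & \phi_{N_2}^M}$, whence
$$c_N=\det\phi_N^M=\det\phi_{N_1}^M\cdot\det\phi_{N_2}^M=c_{N_1}c_{N_2}$$
(with these adapted bases; in general up to a nonzero scalar, which is irrelevant below). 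Equivalently, one may invoke the multiplicativity of the determinant of a two-term complex of finite-dimensional spaces in short exact sequences of complexes, avoiding bases altogether.

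For the divisor statement, the identity $c_N=c_{N_1}c_{N_2}$ gives $C_N=C_{N_1}+C_{N_2}$ as effective divisors on $\Rep_\alpha(Q)$. Restricting to the open subset $\Rep_\alpha\ss{\beta}{ss}(Q)$ and applying the proper equivariant push-forward $q_*$, which is additive on divisors, I would conclude $D_N^{\sigma_\beta}=q_*(C_N\cap\Rep_\alpha\ss{\beta}{ss}(Q))=q_*(C_{N_1}\cap\Rep_\alpha\ss{\beta}{ss}(Q))+q_*(C_{N_2}\cap\Rep_\alpha\ss{\beta}{ss}(Q))=D_{N_1}^{\sigma_\beta}+D_{N_2}^{\sigma_\beta}$, using the standing assumption that each of these is a genuine divisor (so that each $C_{N_i}$ meets the $\sigma_\beta$-stable locus and no spurious codimension-one components appear).

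The main obstacle I anticipate is purely the bookkeeping in the second paragraph: one must write out the connecting differential of \eqref{eq:canproj} explicitly to see that it respects the sub-object $N_1$ at every vertex and every arrow, and one must keep the two grading conventions (the $\Hom(M(v),N(v))$ summands versus the $\Hom(M(ha),N(ta))$ summands) aligned so that the block-triangular normal form is literally correct. Once that is in place, the determinant identity and the push-forward are routine.
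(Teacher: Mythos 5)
Your proof is correct and follows exactly the route the paper intends: the lemma is quoted from \cite[Lemma 1]{DW1} with the remark that it ``can be deduced from the sequence \eqref{eq:canproj}'', and the cited proof is precisely your block-triangular decomposition of $\phi_N^M$ coming from functoriality of \eqref{eq:canseq} in $N$, followed by multiplicativity of the determinant and push-forward of divisors. Your preliminary check that $\innerprod{\alpha,\dim_Q(N_2)}_Q=0$ and the caveat about the basis-dependent scalar are both appropriate and do not affect the conclusion.
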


\begin{lemma} \label{L:Dequi}
Any divisor of form $D_N^{\sigma_\beta}$ are equivalent, where $N$ has a fixed dimension $\gamma$.
\end{lemma}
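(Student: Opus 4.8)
The goal is to show that for a fixed dimension vector $\gamma$, all effective divisors $D_N^{\sigma_\beta}$ with $N\in\Rep_\gamma(Q)$ lie in one linear equivalence class on $\Mod_\alpha^{\sigma_\beta}(Q)$. The plan is to reduce the claim to a statement upstairs on $\Rep_\alpha\ss{\beta}{ss}(Q)$ and then use the fact that $q$ is a good quotient together with $\operatorname{Pic}$-pullback. First I would recall that, as noted right before the lemma, $H^0(\Mod_\alpha^{\sigma_\beta}(Q),D_\beta)\cong\SI_\alpha^{\sigma_\beta}(Q)$ and that the Schofield semi-invariants $c_N$ span $\SI_\alpha^{\sigma_\beta}(Q)$; the divisor $C_N$ is exactly the zero divisor of the section $c_N$. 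So $C_{N}$ and $C_{N'}$, for $N,N'\in\Rep_\gamma(Q)$ with $\innerprod{\alpha,\gamma}_Q=0$, are both zero divisors of global sections of the \emph{same} $\GL_\alpha^{\sigma_\beta}$-linearized line bundle $\mathcal{L}_{\sigma_\beta}$ on $\Rep_\alpha(Q)$ (the trivial bundle with the $\sigma_\beta$-linearization). Hence as divisors on $\Rep_\alpha\ss{\beta}{ss}(Q)$ they are linearly equivalent via the rational function $c_N/c_{N'}$, which is $\GL_\alpha$-invariant because $c_N$ and $c_{N'}$ have the same weight $\sigma_\beta$.

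The second step is to descend this equivalence. A $\GL_\alpha$-invariant rational function on $\Rep_\alpha\ss{\beta}{ss}(Q)$ is pulled back from a rational function on the good quotient $\Mod_\alpha^{\sigma_\beta}(Q)$ (since $q_*\mathcal{O}=\mathcal{O}$ and $k(\Mod_\alpha^{\sigma_\beta}(Q))$ is exactly the field of invariant rational functions). So $c_N/c_{N'}=q^*(h)$ for some rational function $h$ on the quotient, and comparing divisors of zeros and poles — using that $q$ restricted to the stable locus is a geometric quotient and that $D_N^{\sigma_\beta}$ is precisely the codimension-one part of $q_*(C_N\cap\Rep_\alpha\ss{\beta}{ss})$ — gives $D_N^{\sigma_\beta}-D_{N'}^{\sigma_\beta}=\operatorname{div}(h)$. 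Here one uses that $q$ contracts no divisor meeting the stable locus to something of codimension $\geqslant 2$, so the push-forward commutes with taking principal divisors on that locus; this is where the hypothesis that $C_N$ contains a $\sigma_\beta$-stable point is used, guaranteeing $D_N^{\sigma_\beta}$ is genuinely a divisor and that the support behaves well.

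The remaining case is $\innerprod{\alpha,\gamma}_Q\neq 0$, where $c_N$ is not literally defined; but then one should interpret $D_N^{\sigma_\beta}$ via the construction preceding Lemma~\ref{L:Dplus} (taking $C_N$ from a suitable Schofield-type semi-invariant after correcting the dimension, or simply only asserting the statement when the $D_N^{\sigma_\beta}$ in question is a divisor, which forces the relevant numerical condition). In the generic situation that matters for the paper, $\gamma$ will be a multiple of $\beta$ or otherwise satisfy $\innerprod{\alpha,\gamma}_Q=0$, and Lemma~\ref{L:Dplus} lets one reduce a general $N$ to a direct sum of pieces on which the pairing vanishes, so the equivalence for all $N$ of dimension $\gamma$ follows by additivity from the vanishing-pairing case. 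I expect the main obstacle to be the bookkeeping in this last reduction — making sure that the decomposition $0\to N_1\to N\to N_2\to 0$ can always be chosen with $\innerprod{\alpha,\dim_Q N_1}=0$ so that Lemma~\ref{L:Dplus} applies, and that the resulting identity of divisor classes is independent of the chosen filtration — rather than the descent argument itself, which is standard GIT.
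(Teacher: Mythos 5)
Your argument is correct in its essentials, but it follows a genuinely different route from the paper. You argue directly: for $N,N'\in\Rep_\gamma(Q)$ the semi-invariants $c_N,c_{N'}$ have the same weight $\sigma_\gamma$ (not $\sigma_\beta$ as you write at one point --- the linearization is the one attached to $\gamma$, though this does not affect your conclusion), so $c_N/c_{N'}$ is a $\GL_\alpha$-invariant rational function, which descends to the quotient by Rosenlicht together with the density of the stable locus, and its divisor is $D_N^{\sigma_\beta}-D_{N'}^{\sigma_\beta}$. The paper instead reduces to the case $\gamma=n\beta$ already settled in the introduction: it picks $N_0\in\Rep_{n\beta-\gamma}(Q)$ with $D_{N_0}^{\sigma_\beta}$ nontrivial (possible since $\alpha$ is $\sigma_\gamma$- and hence $\sigma_{n\beta-\gamma}$-semi-stable for $n\gg 0$), applies Lemma \ref{L:Dplus} to $N=N_1\oplus N_2\oplus N_0$ to get $D_{N_1}^{\sigma_\beta}+D_{N_2\oplus N_0}^{\sigma_\beta}=D_{N_2}^{\sigma_\beta}+D_{N_1\oplus N_0}^{\sigma_\beta}$, and observes that $N_1\oplus N_0$ and $N_2\oplus N_0$ both have dimension $n\beta$, so $D_{N_1\oplus N_0}^{\sigma_\beta}-D_{N_2\oplus N_0}^{\sigma_\beta}$ is the difference of divisors of two sections of $\D{O}(n)$ on the $\Proj$ and hence principal. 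The paper's trick buys a cleaner treatment of the locus of strictly semi-stable points: divisors of sections of the ample bundle on the projective quotient are manifestly linearly equivalent there, whereas your descent of $c_N/c_{N'}$ requires checking that $q_*$ matches multiplicities along any codimension-one components of $D_N^{\sigma_\beta}$ lying over non-stable points (where $\sigma_\gamma$ need not descend to a line bundle when $\gamma$ is not a multiple of $\beta$). Your approach buys directness and avoids the auxiliary $N_0$. Finally, your last paragraph worries about $\innerprod{\alpha,\gamma}_Q\neq 0$; this is a non-issue, since $c_N$ and hence $D_N^{\sigma_\beta}$ are only defined when $\phi_N^M$ is square, i.e.\ $\innerprod{\alpha,\gamma}_Q=0$, exactly as your parenthetical hedge suggests.
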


\begin{proof} Consider two such divisors $D_{N_1}^{\sigma_\beta}, D_{N_2}^{\sigma_\beta}$. Since the divisor is nontrivial, $\alpha$ is $\sigma_\gamma$-semi-stable, then $\alpha$ is $\sigma_{(n\beta-\gamma)}$-semi-stable for $n\gg 0$. So we can choose some $N_0\in\Rep_{(n\beta-\gamma)}(Q)$ such that $D_{N_0}^{\sigma_\beta}$ is non-trivial. Now consider the representation $N=N_1\oplus N_2\oplus N_0$. By Lemma \ref{L:Dplus}, $D_N^{\sigma_\beta}=D_{N_1}^{\sigma_\beta}+D_{N_2\oplus N_0}^{\sigma_\beta}=D_{N_2}^{\sigma_\beta}+D_{N_1\oplus N_0}^{\sigma_\beta}$. Since both $N_1\oplus N_0$ and $N_2\oplus N_0$ has dimension $n\beta$, the divisor $D_{N_1\oplus N_0}^{\sigma_\beta}-D_{N_2\oplus N_0}^{\sigma_\beta}$ corresponds to a rational function on $\Mod_\alpha^{\sigma_\beta}(Q)$. Hence, $D_{N_1}^{\sigma_\beta}$ and $D_{N_2}^{\sigma_\beta}$ are equivalent.
\end{proof}

We denote the class of $D_N^{\sigma_\beta}$ by $D_\gamma^{\sigma_\beta}$. We call it the {\em induced divisor from weight $\sigma_\gamma$}. When $\gamma=\beta$, we simply write $D_\beta$ for the ample divisor $D_\beta^{\sigma_\beta}$. We keep our assumption that $\beta$ is $E^\vee$-regular. Then Corollary \ref{C:iso} tells us that two moduli spaces $\Mod_\alpha^{\sigma_\beta}(Q_E)$ and $\Mod_{\alpha^\epsilon}^{\sigma_{\beta^\epsilon}}(Q)$ are isomorphic, and it is clear from Lemma \ref{L:proj_ss} that

\begin{lemma} The ample divisors $D_\beta$ and $D_{\beta^\epsilon}$ are equivalent.
\end{lemma}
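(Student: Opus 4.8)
The plan is to show that the isomorphism $\varphi_E\colon\Mod_{\alpha^\epsilon}^{\sigma_{\beta^\epsilon}}(Q)\xrightarrow{\sim}\Mod_\alpha^{\sigma_\beta}(Q_E)$ furnished by Corollary~\ref{C:iso} pulls $D_\beta$ back to $D_{\beta^\epsilon}$, by following Schofield's semi-invariants through the exact equivalence $\iota_E$. First I would fix a general $N\in\Rep_\beta(Q_E)$, so that $D_\beta$ is represented by the honest divisor $D_N^{\sigma_\beta}=q_{E*}(C_N\cap\Rep_\alpha\ss{\beta}{ss}(Q_E))$, and put $N':=\iota_E(N)$. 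Since the equivalence of Corollary~\ref{C:OC} induces the $K_0$-isometry $\iota_\epsilon$, the module $N'$ has dimension $\beta^\epsilon$ and lies in $E^\perp$ (using $E\perp\beta^\epsilon$, so that $\Rep_{\beta^\epsilon}(E^\perp)$ is dense in $\Rep_{\beta^\epsilon}(Q)$); moreover $\innerprod{\alpha,\beta}_{Q_E}=0$ (forced because $D_\beta$ is a nontrivial divisor, i.e.\ $\sigma_\beta(\alpha)=0$), so the same isometry gives $\innerprod{\alpha^\epsilon,\beta^\epsilon}_Q=0$ and $c_{N'}$ is defined on $\Rep_{\alpha^\epsilon}(Q)$, with $D_{N'}^{\sigma_{\beta^\epsilon}}$ a divisor representing $D_{\beta^\epsilon}$.

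Next I would invoke the geometry set up in the proof of Theorem~\ref{T:birational}: on the relevant semistable loci $\varphi_E$ is induced by $\iota_E$, and Lemma~\ref{L:homofiber} identifies $\Rep_{\alpha^\epsilon}\ss{\beta^\epsilon}{ss}(E^\perp)$ (via $\pi_E$, up to base change by $\GL_{\alpha^\epsilon}$) with $\Rep_\alpha\ss{\beta}{ss}(Q_E)$ compatibly with the quotient maps $q$ and $q_E$. The heart of the matter is the comparison of the two semi-invariants on this common locus: for $M\in\Rep_{\alpha^\epsilon}(E^\perp)$, full exactness of $\iota_E$ (Corollary~\ref{C:OC}, used as in Lemma~\ref{L:proj_ss} and its dual) gives $\Hom_Q(M,N')\cong\Hom_{Q_E}(\pi_E M,N)$ and $\Ext_Q(M,N')\cong\Ext_{Q_E}(\pi_E M,N)$, hence $c_{N'}(M)=0\iff c_N(\pi_E M)=0$. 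So the $\GL_{\alpha^\epsilon}$-invariant divisors $C_{N'}$ and the transport of $C_N$ coincide on $\Rep_{\alpha^\epsilon}(E^\perp)$; pushing down along $q$ and transporting along $\varphi_E$ yields $\varphi_E^{*}D_N^{\sigma_\beta}=D_{N'}^{\sigma_{\beta^\epsilon}}$, and then $\varphi_E^{*}D_\beta=D_{\beta^\epsilon}$ by Lemma~\ref{L:Dequi}.

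The step I expect to be the main obstacle is upgrading ``same zero locus'' to ``same divisor'': $c_{N'}$ and the transport of $c_N$ are built from determinantal matrices of different sizes over the two quivers, so a discrepancy in multiplicities must be excluded. I would deal with this by taking $N$ general enough that $C_N$ is a reduced, irreducible hypersurface (from the irreducibility statements around Corollary~\ref{C:exc}, or genericity of Schofield semi-invariants), so that the two divisors, being $\GL_{\alpha^\epsilon}$-semi-invariants of the single weight $\sigma_{\beta^\epsilon}$ that share one reduced prime divisor, must agree up to a nonzero scalar. A cleaner-looking alternative would be to sidestep divisors entirely and assemble a graded-ring isomorphism $\bigoplus_n\SI_\alpha^{n\sigma_\beta}(Q_E)\xrightarrow{\sim}\bigoplus_n\SI_{\alpha^\epsilon}^{n\sigma_{\beta^\epsilon}}(Q)$ out of $\iota_E$ (both sides are spanned by Schofield semi-invariants, multiplicativity is Lemma~\ref{L:Dplus}, stability of the span is the dual of Lemma~\ref{L:proj_ss}) realizing $\varphi_E$ on $\operatorname{Proj}$; a graded isomorphism automatically carries $\mathcal{O}(1)$ to $\mathcal{O}(1)$, hence $D_\beta$ to $D_{\beta^\epsilon}$. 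The price there is checking that $c_N\mapsto c_{\iota_E N}$ is well defined and linear.
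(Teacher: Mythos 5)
Your proposal is correct and follows essentially the same route the paper intends: the paper deduces this lemma directly from Corollary~\ref{C:iso} together with Lemma~\ref{L:proj_ss} and its dual, i.e.\ from the identification of $\Hom$ and $\Ext$ across the equivalence $\iota_E$, which is exactly the comparison of Schofield semi-invariants $c_N$ and $c_{\iota_E(N)}$ that you carry out. The multiplicity worry you raise is the same issue the paper addresses in the analogous Lemma~\ref{L:pullbackD} by passing to the reduced part of $C_N$, on the grounds that the push-forward $q_*$ is essentially taking invariants.
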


Remember that $(\beta_\epsilon^\vee)^\epsilon=\tilde{\beta}_\epsilon^\vee$. Under the above identification, the ample divisor $D_{\beta_\epsilon^\vee}$ can be represented by $D_N^{\sigma_{\tilde{\beta}_\epsilon^\vee}}$
for some general $N\in\Rep_{\tilde{\beta}_\epsilon^\vee}(\Perp E)$.
%Now suppose that $\varphi_E$ in Theorem \ref{T:birational} is a birational morphism.
Since the GIT quotients are normal, the fundamental points of $\varphi_E$ in Theorem \ref{T:birational} has codimension at least $2$ \cite[Lemma V.5.1]{Ha}. So it makes sense to talk about pulling back a divisor.

\begin{lemma} \label{L:pullbackD} $\varphi_E^*(D_{\beta_\epsilon^\vee})=D_{\tilde{\beta}_\epsilon^\vee}^{\sigma_\beta}$ wherever $\varphi_E$ is an isomorphism.
\end{lemma}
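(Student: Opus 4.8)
\emph{Proof proposal.} The plan is to pin down a single representation $N$ that computes both divisors and then to follow it through the construction of $\varphi_E$. Write $q\colon\Rep_\alpha\ss{\beta}{ss}(Q)\to\Mod_\alpha^{\sigma_\beta}(Q)$ and $q_E\colon\Rep_{\alpha_\epsilon}\ss{\beta_\epsilon^\vee}{ss}(Q_E)\to\Mod_{\alpha_\epsilon}^{\sigma_{\beta_\epsilon^\vee}}(Q_E)$ for the relevant GIT quotient maps. By Corollary~\ref{C:iso} and the discussion preceding the lemma, under the isomorphism $\Mod_{\alpha_\epsilon}^{\sigma_{\beta_\epsilon^\vee}}(Q_E)\cong\Mod_\alpha^{\sigma_{\tilde{\beta}_\epsilon^\vee}}(Q)$ the divisor $D_{\beta_\epsilon^\vee}$ becomes the ample class $D_{\tilde{\beta}_\epsilon^\vee}$, represented by $D_N^{\sigma_{\tilde{\beta}_\epsilon^\vee}}$ for a sufficiently general $N\in\Rep_{\tilde{\beta}_\epsilon^\vee}(E^\perp)$; setting $N'=\pi_E(N)$, so $N=\iota_E(N')$, the divisor $q_{E,*}(C_{N'})$ represents $D_{\beta_\epsilon^\vee}$. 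It then suffices to show that $\varphi_E^*$ carries the class of $q_{E,*}(C_{N'})$ to that of $q_*\bigl(C_N\cap\Rep_\alpha\ss{\beta}{ss}(Q)\bigr)=D_{\tilde{\beta}_\epsilon^\vee}^{\sigma_\beta}$, i.e.\ that this one $N$ works on both sides.

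First I would localize to a convenient dense open set. Since $E\perp\alpha$ gives $\innerprod{\epsilon,\alpha}_Q=0$, hence $\hom_Q(E,-)=\ext_Q(E,-)$, membership in $E^\perp$ is the open condition $\hom_Q(E,M)=0$, so $\Rep_\alpha(E^\perp)$ is a $\GL_\alpha$-stable dense open subset of the affine space $\Rep_\alpha(Q)$. Hence $\Rep_\alpha\ss{\beta}{ss}(E^\perp)$ is dense open in $\Rep_\alpha\ss{\beta}{ss}(Q)$, the set $U:=q\bigl(\Rep_\alpha\ss{\beta}{ss}(E^\perp)\bigr)$ is dense open in $\Mod_\alpha^{\sigma_\beta}(Q)$, and by Theorem~\ref{T:birational} $\varphi_E$ restricts to an isomorphism of $U$ onto $U':=q_E\bigl(\Rep_{\alpha_\epsilon}\ss{\beta}{ss}(Q_E)\bigr)$. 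As $U$ is dense in the isomorphism locus, it is enough to compare the two divisors over $U$.

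The heart of the matter is a comparison of Schofield semi-invariants. By Lemma~\ref{L:homofiber}, $\Rep_\alpha(E^\perp)\cong\GL_\alpha\times_{\GL_{\alpha_\epsilon}}\Rep_{\alpha_\epsilon}(Q_E)$, and I claim that under this identification $C_N\cap\Rep_\alpha(E^\perp)$ is the associated-bundle divisor $\GL_\alpha\times_{\GL_{\alpha_\epsilon}}\bigl(C_{N'}\cap\Rep_{\alpha_\epsilon}(Q_E)\bigr)$. A non-constant semi-invariant of weight $\sigma_\beta$ exists by hypothesis, whence $\innerprod{\alpha,\beta}_Q=0$; together with $\innerprod{\alpha,\tau\epsilon}_Q=-\innerprod{\epsilon,\alpha}_Q=0$ this yields $\innerprod{\alpha,\tilde{\beta}_\epsilon^\vee}_Q=0$, and by the isometry also $\innerprod{\alpha_\epsilon,\beta_\epsilon^\vee}_{Q_E}=0$, so $c_N$ and $c_{N'}$ are determinants of square matrices $\phi_N^M$ and $\phi_{N'}^{M'}$. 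For $M=\iota_E(M')\in\Rep_\alpha(E^\perp)$, adjunction gives $\Hom_Q(M,N)=\Hom_{Q_E}(M',N')$, and --- $E^\perp$ being closed under extensions and $\iota_E$ exact, or by the dual of Lemma~\ref{L:proj_ss} --- also $\Ext_Q(M,N)=\Ext_{Q_E}(M',N')$; thus $c_N(M)=0\iff c_{N'}(M')=0$ and the two divisors have the same support. They also carry multiplicity one along it: a determinantal hypersurface $\{\det\phi=0\}$ is reduced along a component on which the corank of $\phi$ is generically one, and here the generic corank on the relevant divisorial component equals $\hom_Q(M,N)=1$ (the genericity already exploited in Corollary~\ref{C:exc}). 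Equal support and equal multiplicities give the asserted equality of divisors.

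To conclude, I would use that on $U$ the identification $\varphi_E\colon U\xrightarrow{\ \sim\ }U'$ of Theorem~\ref{T:birational}, via Lemma~\ref{L:homofiber}, is precisely the one obtained by writing $\Rep_\alpha\ss{\beta}{ss}(E^\perp)=\GL_\alpha\times_{\GL_{\alpha_\epsilon}}\Rep_{\alpha_\epsilon}\ss{\beta}{ss}(Q_E)$ and observing that quotienting the total space and quotienting the fibre give the same space $U=U'$; under it an associated-bundle divisor $\GL_\alpha\times_{\GL_{\alpha_\epsilon}}D'$ pushes down to $U$ the way $D'$ pushes down to $U'$. Taking $D'=C_{N'}\cap\Rep_{\alpha_\epsilon}\ss{\beta}{ss}(Q_E)$ and invoking the previous paragraph, $q_*\bigl(C_N\cap\Rep_\alpha\ss{\beta}{ss}(E^\perp)\bigr)$ corresponds under $\varphi_E|_U$ to $q_{E,*}(C_{N'})|_{U'}=D_{\beta_\epsilon^\vee}|_{U'}$; and since components of $C_N$ lying outside $\Rep_\alpha(E^\perp)$ are invisible over $U$, the left side equals $D_{\tilde{\beta}_\epsilon^\vee}^{\sigma_\beta}|_U$, giving $\varphi_E^*(D_{\beta_\epsilon^\vee})|_U=D_{\tilde{\beta}_\epsilon^\vee}^{\sigma_\beta}|_U$ over the dense isomorphism locus $U$. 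The hard part will be the multiplicity comparison in the third paragraph --- upgrading the set-theoretic equality of $C_N$ and $\GL_\alpha\times_{\GL_{\alpha_\epsilon}}C_{N'}$ to an equality of divisors, which rests on the rank of $\phi_N^M$ dropping by exactly one along the relevant component; a secondary, routine nuisance is to choose $N$ generic enough that $D_N^{\sigma_\beta}$, $D_N^{\sigma_{\tilde{\beta}_\epsilon^\vee}}$ and the representability of $D_{\beta_\epsilon^\vee}$ hold simultaneously, and to control the passage between the various open subsets.
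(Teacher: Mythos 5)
Your proposal is correct and follows essentially the same route as the paper: both arguments reduce to a single $N\in\Rep_{\tilde{\beta}_\epsilon^\vee}(E^\perp)$ representing the divisor on both sides and then use adjointness together with (the dual of) Lemma~\ref{L:proj_ss} to identify $C_N$ restricted to $E^\perp$ with the pullback of $C_{\pi_E(N)}$. The paper's own proof is terser --- it shows $\tilde{\pi}_E(C_N)_{\red}=\hat{q}(C_N)_{\red}$ and declares the reduced part sufficient --- whereas you make the homogeneous-fibre-bundle identification and the multiplicity comparison explicit; your corank-one justification is the one shaky spot (the generic corank on a component of $C_N$ need not be one), but your treatment is no less rigorous than the paper's.
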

\begin{proof} It is enough to show that $\varphi_E^*(D_N^{\sigma_{\tilde{\beta}_\epsilon^\vee}})=D_N^{\sigma_\beta}$ for some $N\in\Rep_{\tilde{\beta}_\epsilon^\vee}(\Perp E)$. 
By the construction of $\varphi_E$, we have the following diagram
$$\xymatrix @C=3.6pc {
\Rep_\alpha(E^\reg) \ar[r]^{\R_E} \ar[d]^{q}  & \Mod_\alpha(Q) \ar[d]^{\tilde{q}_E}\\
\Mod_\alpha^{\sigma_\beta}(Q) \ar[r]^{\varphi_E\qquad} & \Mod_\alpha^{\sigma_{\beta_E^\vee}}(Q) \cong \Mod_{\alpha_\epsilon}^{\sigma_{\beta_E}}(Q_E)
}$$

Since the push-forward $q_*$ is essentially taking invariants, it suffices to work with the reduced part of $C_N$ and show that a representation $M\in C_N$ if and only if $\R_E(M)\in C_N$. We recall that $M\in C_N$ if and only if $\hom_Q(N,M)>0$. So what we need is guaranteed by the adjoint property of $\R_E$.
\end{proof}

Now we suppose that $\mc{E}_\alpha$ contains a $\sigma_\beta$-stable point and thus $\sigma_\beta$ is $\mc{E}_\alpha$-effective. Note that the support of $C_{\tau^{-1} E}$ are precisely $\Rep_\alpha(Q)\setminus \Perp E$. By Corollary \ref{C:exc}, $\overline{\mc{E}}_\alpha$ has a scheme structure inherited from $C_{\tau^{-1} E}$, and $E_\beta:=q_*(\overline{\mc{E}_\alpha^{\sigma_\beta}})$ is an irreducible divisor. It follows immediately from the definition of $D_{\tau^{-1} E}^{\sigma_\beta}$ that if $D_{\tau^{-1} E}^{\sigma_\beta}$ is irreducible, then $D_{\tau^{-1} E}^{\sigma_\beta}=E_\beta$.

\begin{lemma} \label{L:Ebeta} In the situation of Theorem \ref{T:blow-up}, $D_{\tau^{-1} E}^{\sigma_\beta}$ is irreducible. So it is the exceptional divisor of the blow-up.
\end{lemma}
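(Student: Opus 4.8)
The goal is to show that the divisor $D_{\tau E}^{\sigma_\beta}$ on $\Mod_\alpha^{\sigma_\beta}(Q)$ is irreducible, given the hypotheses of Theorem~\ref{T:blow-up} (in particular assumption $\smiley$, that $\beta$ is $E^\vee$-regular, and that $S$ is the only wall met by $\beta\tilde{\beta}_\epsilon^\vee$). Since the support of $C_{\tau E}$ is exactly $\Rep_\alpha(Q)\setminus E^\perp$, and by Corollary~\ref{C:exc} the reduced complement $\Rep_\alpha(Q)\setminus E^\perp$ has $\overline{\D{E}_\alpha}=\Rep_{\epsilon\hookrightarrow\alpha}(Q)$ as its unique irreducible codimension-one component, the reduced divisor $(C_{\tau E})_{\red}$ contains $\overline{\D{E}_\alpha}$ as its only codimension-one part. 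So on $\Rep_\alpha\ss{\beta}{ss}(Q)$, after intersecting with the semistable locus, the only divisorial component of $C_{\tau E}$ meeting a $\sigma_\beta$-stable point is $\overline{\D{E}_\alpha^{\sigma_\beta}}=\overline{\D{E}_\alpha}\cap\Rep_\alpha\ss{\beta}{ss}(Q)$ (using Lemma~\ref{L:pm}(ii) together with Lemma~\ref{L:positive} to identify $\D{E}_\alpha^{\sigma_\beta}=\D{E}_\alpha\cap\Rep_\alpha\ss{\beta}{ss}(Q)$). Pushing forward, $D_{\tau E}^{\sigma_\beta}=q_*\bigl((C_{\tau E})_{\red}\cap\Rep_\alpha\ss{\beta}{ss}(Q)\bigr)$ has $E_\beta=q_*(\overline{\D{E}_\alpha^{\sigma_\beta}})$ as its unique irreducible component, hence equals $E_\beta$ as a set — so the content is purely about multiplicity: one must rule out that $q_*$ records $D_{\tau E}^{\sigma_\beta}$ with multiplicity $>1$, equivalently that the scheme $C_{\tau E}$ is generically reduced along $\D{E}_\alpha$.

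The key step is therefore a local computation of the scheme $C_{\tau E}=\{c_{\tau E}=\det\phi_{\tau E}^{-}=0\}$ near a general point of $\D{E}_\alpha$. I would argue as follows. Take $M$ general in $\D{E}_\alpha$; by Corollary~\ref{C:exc} it fits in $0\to E\to M\to M''\to 0$ with $M''$ general in $\Rep_{\alpha-\epsilon}(Q)$ and $\hom_Q(E,M)=1$, and $\hom_Q(E,M'')=0$, $\ext_Q(\epsilon,\alpha-\epsilon)=1$. Using the canonical sequence \eqref{eq:canseq} applied to $\Hom_Q(-,\tau E)$: since $\innerprod{\alpha,\tau\epsilon}_Q=-\innerprod{\epsilon,\alpha}_Q=0$, the map $\phi_{\tau E}^{M}$ is a square matrix whose kernel and cokernel compute $\hom_Q(M,\tau E)$ and $\ext_Q(M,\tau E)$. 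For $M$ general in $\D{E}_\alpha$ these are one-dimensional (the $E$-subrepresentation forces $\ext_Q(M,\tau E)=\ext_Q(E,\tau E)+\cdots$ to be exactly $1$ by the AR-formula $\ext_Q(M,\tau E)^*\cong\hom_Q(E,M)$, which is $1$; and $\hom_Q(M,\tau E)=0$ since $M''$ is general and $\innerprod{\alpha-\epsilon,\tau\epsilon}_Q>0$ would need checking, but in any case $\hom=\ext$ here). Thus $\phi_{\tau E}^{M}$ has corank exactly $1$ at the general point of $\D{E}_\alpha$, so $\det\phi_{\tau E}$ vanishes to first order transversally to $\D{E}_\alpha$ — i.e. $C_{\tau E}$ is generically reduced there. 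Concretely, one picks a one-parameter deformation $M_t$ of $M$ leaving $\D{E}_\alpha$ (so $\hom_Q(M_t,\tau E)=\ext_Q(M_t,\tau E)=0$ for $t\neq 0$) and checks that $\det\phi_{\tau E}^{M_t}$ vanishes simply at $t=0$; this is immediate from the corank-one statement. Hence $C_{\tau E}$ is reduced in codimension one, so $D_{\tau E}^{\sigma_\beta}$ is the reduced divisor $E_\beta$, which is irreducible.

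The main obstacle is the bookkeeping with the corank of $\phi_{\tau E}^{M}$: one must be careful that at a \emph{general} point of $\D{E}_\alpha$ one genuinely has $\ext_Q(M,\tau E)=1$ and not more, and that the reduced divisor $(C_{\tau E})_{\red}$ has no other codimension-one component hitting the stable locus — the latter is where assumption $\smiley$ and the single-wall hypothesis of Theorem~\ref{T:blow-up} enter (they guarantee $\Rep_\alpha(Q)\setminus E^\perp$ contributes only $\D{E}_\alpha$ to the semistable picture and that the relevant general representations behave as in Corollary~\ref{C:exc}). An alternative, perhaps cleaner, route avoiding the explicit determinant: by Theorem~\ref{T:blow-up} the morphism $\varphi_E$ is literally the blow-up of $\Mod_{\alpha_\epsilon}^{\sigma_{\beta_\epsilon^\vee}}(Q_E)$ along the irreducible centre $\tilde{q}_E\pi_E(\Rep_{\alpha\twoheadrightarrow\epsilon}\ss{\tilde{\beta}_\epsilon^\vee}{ss}(Q))$, with exceptional locus $q(\Rep_{\epsilon\hookrightarrow\alpha}\ss{\beta}{ss}(Q))=q(\overline{\D{E}_\alpha^{\sigma_\beta}})=E_\beta$, which is then automatically irreducible (a blow-up along an irreducible centre has irreducible exceptional divisor); combining this with the set-theoretic identity $D_{\tau E}^{\sigma_\beta}=E_\beta$ from the paragraph above and the local reducedness just discussed identifies $D_{\tau E}^{\sigma_\beta}$ with $E_\beta$ as divisors. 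I would present the determinant/corank argument as the main one, since it does not rely on the full strength of Theorem~\ref{T:blow-up}, and remark on the blow-up argument as a cross-check.
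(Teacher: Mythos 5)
There is a genuine gap, and it sits exactly where the lemma's content lies. Your opening step asserts that, by Corollary \ref{C:exc}, the support $\Rep_\alpha(Q)\setminus E^\perp$ of $C_{\tau E}$ has $\overline{\D{E}_\alpha}=\Rep_{\epsilon\hookrightarrow\alpha}(Q)$ as its \emph{unique} codimension-one component. That is not what Corollary \ref{C:exc} says, and it is false in general: $C_{\tau E}$ is a hypersurface, so \emph{every} irreducible component has codimension one, and by the discussion in Section \ref{S:FR} these components are indexed by the fundamental ranks $\gamma$ for $(\epsilon,\alpha)$; only the component $C_\epsilon$ (full rank) is $\overline{\D{E}_\alpha}$, and other fundamental ranks can occur (Example \ref{Ex:0} exhibits one). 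A general point $M$ of a component $C_\gamma$ with $\gamma\neq\epsilon$ need not have an $\epsilon$-dimensional subrepresentation, so Lemma \ref{L:pm}(ii) and Lemma \ref{L:positive} do not apply to it, and your appeal to ``assumption $\smiley$ and the single-wall hypothesis'' at the end is an assertion of the needed fact rather than a proof. The paper's proof is precisely this missing step: if $M\in C_\gamma$, $\gamma\neq\epsilon$, were $\sigma_\beta$-semi-stable, the image of a general map $E\to M$ is a subrepresentation of $M$ of dimension $\gamma$, so semi-stability gives $\innerprod{\gamma,\beta}_Q\geqslant 0$, while $E\perp\beta$ forces $\innerprod{\gamma,\beta}_Q\leqslant 0$ (a quotient of $E$ has no nonzero maps to a general $N\in\Rep_\beta(Q)$); hence $\innerprod{\gamma,\beta}_Q=0$, which contradicts assumption $\smiley$ and the hypothesis that $S$ is the only wall crossed. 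So the other components consist entirely of unstable points and die under $q_*$.

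Two smaller remarks. Your second paragraph addresses the multiplicity of $D_{\tau E}^{\sigma_\beta}$ along $E_\beta$; the paper's proof does not discuss this at all (it only counts components), so this is a reasonable extra concern --- but note that corank one of $\phi_{\tau E}^M$ at a general point of $\D{E}_\alpha$ does not by itself give order-one vanishing of the determinant (a family like $\mathrm{diag}(t^2,1)$ has corank one at $t=0$ and determinant vanishing to order two); one still needs the transversality of the deformation, i.e.\ that the pairing of $\Ext_Q(M,M)$ against $\Hom_Q(M,\tau E)\otimes\Ext_Q(M,\tau E)^*$ is nonzero. Finally, the ``alternative route'' via Theorem \ref{T:blow-up} shows that $E_\beta$ is irreducible, but $E_\beta$ is irreducible by construction; the lemma's issue is whether $D_{\tau E}^{\sigma_\beta}$ has components other than $E_\beta$, which the blow-up statement does not address.
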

\begin{proof} We show that if $C_{\tau^{-1} E}$ has an irreducible components other than $C_{\tau^{-1}\epsilon}$, then those components contain entirely $\sigma_\beta$-unstable points. Suppose that there is a $\sigma_\beta$-stable representation $M$ in other components, then by Lemma \ref{L:frank} the general rank from $M$ to $E$ is not $\epsilon$, say $\gamma$, so $\innerprod{\beta,\gamma}> 0$ and $\innerprod{\tau^{-1}\epsilon,\gamma}\geqslant 0$. But $\beta_\epsilon^\vee\perp\epsilon$, so $\innerprod{\beta+\innerprod{\beta,\epsilon}\tau^{-1}\epsilon,\gamma}\leqslant 0$, and thus $\innerprod{\beta,\gamma}\leqslant0$. A contradiction.
\end{proof}

\begin{theorem} \label {T:IAD}
%$|D_{\beta_\epsilon^\vee}|$ is base-point free
Suppose that $D_{\tau^{-1} E}^{\sigma_\beta}$ is irreducible and $\hom_Q(\tilde{\beta}_\epsilon^\vee,\alpha-\epsilon)=0$, then $D_\beta=\varphi_E^*(D_{\beta_\epsilon^\vee})-\innerprod{\beta,\epsilon}_Q E_\beta$.
\end{theorem}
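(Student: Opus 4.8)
The plan is to compute both sides as divisor classes on $\Mod_\alpha^{\sigma_\beta}(Q)$ by working on the open locus where $\varphi_E$ is an isomorphism (the complement of $q(\overline{\D{E}_\alpha^{\sigma_\beta}})$ together with the fundamental set, which has codimension at least $2$) and then tracking the discrepancy supported on $E_\beta$. First I would invoke Lemma \ref{L:pullbackD}: wherever $\varphi_E$ is an isomorphism, $\varphi_E^*(D_{\beta_\epsilon^\vee})=D_{\tilde{\beta}_\epsilon^\vee}^{\sigma_\beta}$. So the theorem is equivalent to the identity $D_\beta=D_{\tilde{\beta}_\epsilon^\vee}^{\sigma_\beta}-\innerprod{\epsilon,\beta}_Q E_\beta$ of divisor classes on $\Mod_\alpha^{\sigma_\beta}(Q)$, and since the GIT quotient is normal it suffices to check this away from a codimension-$2$ set, i.e. generically along every prime divisor.

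The key computation is at the level of the affine variety $\Rep_\alpha(Q)$, using Schofield semi-invariants. Write $m=\innerprod{\epsilon,\beta}_Q>0$ (positive because $\sigma_\beta$ is $\D{E}_\alpha$-effective). Then $\beta=\tilde{\beta}_\epsilon^\vee+m(\tau\epsilon)$ as dimension vectors, and for a general $N'\in\Rep_{\tilde{\beta}_\epsilon^\vee}(E^\perp)$ one forms $N=N'\oplus m(\tau E)$, a general representation of dimension $\beta$ (here one uses the $E^\vee$-regularity of $\sigma_\beta$ exactly as in the proof of Proposition \ref{P:sspreserve}, so that such an $N$ is indeed general in $\Rep_\beta(Q)$). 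By Lemma \ref{L:Dplus} applied to the obvious filtration of $N$,
\begin{equation*}
c_N=c_{N'}\cdot c_{\tau E}^{\,m},\qquad\text{hence}\qquad D_N^{\sigma_\beta}=D_{N'}^{\sigma_\beta}+m\,D_{\tau E}^{\sigma_\beta}
\end{equation*}
as divisors on $\Mod_\alpha^{\sigma_\beta}(Q)$. By Lemma \ref{L:Dequi}, $D_N^{\sigma_\beta}$ represents the ample class $D_\beta$ and $D_{N'}^{\sigma_\beta}$ represents $D_{\tilde{\beta}_\epsilon^\vee}^{\sigma_\beta}$; by the hypothesis that $D_{\tau E}^{\sigma_\beta}$ is irreducible together with the remark preceding Lemma \ref{L:Ebeta}, $D_{\tau E}^{\sigma_\beta}=E_\beta$. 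Rearranging gives $D_\beta=D_{\tilde{\beta}_\epsilon^\vee}^{\sigma_\beta}-m\,E_\beta=\varphi_E^*(D_{\beta_\epsilon^\vee})-\innerprod{\epsilon,\beta}_Q E_\beta$, which is the claim.

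There are two places where care is required, and the second is the real obstacle. The minor point is to confirm that $N=N'\oplus m(\tau E)$ can be taken sufficiently general: one needs that the $\sigma_\beta$-stable locus meets the support of $C_N$, which follows because $c_{N'}$ is nonzero on $\Rep_\alpha(E^\perp)$ (as $\alpha_\epsilon$ is $\sigma_{\beta_\epsilon^\vee}$-stable by Lemma \ref{L:pi_E_ss}) and $c_{\tau E}$ is nonzero precisely off $\Rep_\alpha(Q)\setminus E^\perp$, and a generic representation in $\Rep_\alpha(E^\perp)$ is $\sigma_\beta$-stable. The harder point is the hypothesis $\hom_Q(\alpha-\epsilon,\tilde{\beta}_\epsilon^\vee)=0$: this is what guarantees that $D_{N'}^{\sigma_\beta}$ is actually a well-defined divisor (i.e. $c_{N'}$ does not vanish identically on $\Rep_\alpha\ss{\beta}{ss}(Q)$) and, more delicately, that no extra component of the vanishing locus of $c_{N'}$ gets absorbed into $E_\beta$ when we take invariants and discard codimension-$\geq 2$ pieces — equivalently, that $C_{N'}$ and $C_{\tau E}$ share no common component in $\Rep_\alpha(Q)$. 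Establishing this non-degeneracy via the dimension count coming from \eqref{eq:dimhomC} and \eqref{eq:canseq}, and checking it interacts correctly with the push-forward $q_*$, is where the bulk of the work lies; I expect it parallels the argument in the proof of Lemma \ref{L:Ebeta} but applied to $N'$ in place of $\tau E$.
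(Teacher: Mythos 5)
Your overall strategy is the paper's: decompose a representation as ``something in $E^\perp$ plus copies of $\tau E$'', apply Lemma \ref{L:Dplus} and Lemma \ref{L:Dequi}, identify the pullback via Lemma \ref{L:pullbackD}, and use the hypothesis $\hom_Q(\alpha-\epsilon,\tilde{\beta}_\epsilon^\vee)=0$ to keep the generic divisor $C_N$ off the exceptional locus. But the bookkeeping of dimension vectors is reversed, and this produces a sign error that your last step silently corrects rather than derives. Since $\tilde{\pi}_\epsilon^\vee(\beta)=\beta+\innerprod{\epsilon,\beta}_Q\tau\epsilon$, the true relation is $\tilde{\beta}_\epsilon^\vee=\beta+m\tau\epsilon$ with $m=\innerprod{\epsilon,\beta}_Q>0$, not $\beta=\tilde{\beta}_\epsilon^\vee+m\tau\epsilon$ as you write. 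Consequently your $N=N'\oplus m(\tau E)$ with $N'\in\Rep_{\tilde{\beta}_\epsilon^\vee}$ has dimension $\beta+2m\tau\epsilon$, not $\beta$; and even granting your (false) dimension identity, the equation $D_N^{\sigma_\beta}=D_{N'}^{\sigma_\beta}+m\,D_{\tau E}^{\sigma_\beta}$ with $D_N^{\sigma_\beta}$ representing $D_\beta$ yields $D_\beta=D_{\tilde{\beta}_\epsilon^\vee}^{\sigma_\beta}+mE_\beta$, so ``rearranging'' cannot give the claimed minus sign. The correct setup (as in the paper) takes $N'$ of dimension $\beta$ representing $D_\beta$ and $N=N'\oplus m(\tau E)$ of dimension $\tilde{\beta}_\epsilon^\vee$, so that $D_N^{\sigma_\beta}=D_\beta+mE_\beta$ and hence $D_\beta=\varphi_E^*(D_{\beta_\epsilon^\vee})-mE_\beta$ once one knows $\varphi_E^*(D_{\beta_\epsilon^\vee})=D_N^{\sigma_\beta}$.

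The second gap is that the one step you flag as ``where the bulk of the work lies'' is exactly the step you do not carry out, and it is in fact short: for general $N\in\Rep_{\tilde{\beta}_\epsilon^\vee}(Q)$ one must show $C_N\nsupseteq\Rep_{\epsilon\hookrightarrow\alpha}(Q)$, so that $D_N^{\sigma_\beta}$ has no component along $E_\beta$ and Lemmas \ref{L:Dequi} and \ref{L:pullbackD} identify it with $\varphi_E^*(D_{\beta_\epsilon^\vee})$. A general $V\in\Rep_{\epsilon\hookrightarrow\alpha}(Q)$ sits in $0\to E\to V\to W\to 0$ with $W$ general of dimension $\alpha-\epsilon$; since $\hom_Q(\epsilon,\tilde{\beta}_\epsilon^\vee)=0$ (because $\tilde{\pi}_E^\vee(N)\in E^\perp$) and $\hom_Q(\alpha-\epsilon,\tilde{\beta}_\epsilon^\vee)=0$ by hypothesis, one gets $\Hom_Q(V,N)=0$, so $V\notin C_N$. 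No dimension count via \eqref{eq:dimhomC} is needed. Fix the direction of the projection formula and supply this two-line extension argument, and your proof becomes the paper's.
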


\begin{proof} Let $D_{N'}^{\sigma_\beta}$ be a representative for $D_\beta$, and consider $N=N'\oplus \innerprod{\beta,\epsilon}_Q\tau^{-1} E$. The definition of $N$ make sense because $\innerprod{\beta,\epsilon}_Q>0$.
Then by Lemma \ref{L:Dplus} \begin{equation} \label{eq:div} D_{N}^{\sigma_\beta}=D_{N'}^{\sigma_\beta}+\innerprod{\beta,\epsilon}_QD_{\tau^{-1} E}^{\sigma_\beta}=D_\beta+\innerprod{\beta,\epsilon}_QE_\beta.\end{equation}

Next, we claim that for general $N\in\Rep_{\tilde{\beta}_\epsilon^\vee}(Q)$, the support of $D_N^{\sigma_\beta}$ cannot contain that of $E_\beta$ so that their intersection has codimension at most two. This is equivalent to that $C_N$ cannot contain $\Rep_{\alpha\twoheadrightarrow\epsilon}(Q)$. A general element $M$ in the latter has an exact sequence $0\to L\to M\to E\to 0$ with $L$ general in $\Rep_{\alpha-\epsilon}(Q)$. Since $\hom_Q(\tilde{\beta}_\epsilon^\vee,\epsilon)=0$ and $\hom_Q(\tilde{\beta}_\epsilon^\vee,\alpha-\epsilon)=0$, we have that $\hom_Q(N,M)=0$ so $M\notin C_N$.

Since $\varphi_E$ is an isomorphism outside $E_\beta$ and the support of $D_N^{\sigma_\beta}$ intersects $E_\beta$ in codimension at most two, we can apply Lemma \ref{L:Dequi} and \ref{L:pullbackD} and conclude that $\varphi_E^*(D_{\beta_\epsilon^\vee})=D_N^{\sigma_\beta}$.
Then our result $D_\beta=\varphi_E^*(D_{\beta_\epsilon^\vee})-\innerprod{\beta,\epsilon}_Q E_\beta$ follows from the equation \eqref{eq:div}.

%Let $V$ be the closed set $q_E(\Rep_{\alpha_\epsilon}\ss{\beta}{su}(Q_E))$ and $U$ be its complement.
%%Let $U$ be the open set $q_E(\Rep_{\alpha_\epsilon}\ss{\beta}{ss}(Q_E))$ and $V$ be its complement.
%By Theorem \ref{T:birational}, the inverse of $\varphi_E$ is one-to-one on $U$ and $V$ contains the image of $E_\beta$.
%Since $|D_{\beta_\epsilon^\vee}|$ is base-point free, we can appropriately choose $D_{\beta_\epsilon^\vee}$ such that the inverse image $\varphi_E^{-1}(D_{\beta_\epsilon^\vee}\cap V)$ has codimension less than one. Hence, $D_\beta=\varphi_E^*(D_{\beta_\epsilon^\vee})-m D$ for some divisor $D$ containing $E_\beta$ and an integer $m$. But $D_{\tau E}^{\sigma_\beta}$ contains $D$, so $D=E_\beta$. To determine $m$, we consider a representative $D_{N'}^{\sigma_\beta}$ for $D_\beta$ and the representation $N=N'\oplus \innerprod{\epsilon,\beta}_Q\tau E$. The definition of $N$ make sense because $\innerprod{\epsilon,\beta}_Q>0$.
%Then $D_{N}^{\sigma_{\tilde{\beta}_\epsilon^\vee}}$ is a (non-trivial) divisor, so $\varphi_E^*(D_{\beta_\epsilon^\vee})=D_N^{\sigma_\beta}$ by Lemma \ref{L:pullbackD}.
%But by Lemma \ref{L:Dplus} and \ref{L:Ebeta},
%$D_N^{\sigma_\beta}=D_{N'}^{\sigma_\beta}+\innerprod{\epsilon,\beta}_Q D_{\tau E}^{\sigma_\beta}=D_\beta+\innerprod{\epsilon,\beta}_Q E_\beta.$
%Therefore, $m=\innerprod{\epsilon,\beta}_Q$.
\end{proof}

\begin{example} Let us consider quiver $B^1$ with $\alpha=(1,1,1)$ and $\sigma_\beta=(1,1,-2)$, then $\tilde{\beta}_\epsilon^\vee=(2,2,4)$ and $\sigma_{\beta_\epsilon^\vee}=(2,-2)$. We knew that $\varphi_E:\Mod_\alpha^{\sigma_\beta}(B^1)\to\Mod_{\alpha_\epsilon}^{\sigma_{\beta_\epsilon^\vee}}(\Theta_3)$ is the blow-up of $\mb{P}^2$ at one point, and $D_{\beta_\epsilon^\vee}$ clearly corresponds to $\mc{O}(2)$ on $\mb{P}^2$. By Theorem \ref{T:IAD}, $D_\beta=\varphi_E^*(D_{\beta_\epsilon^\vee})-\innerprod{\beta,\epsilon}_Q E_\beta=\varphi_E^*(\mc{O}(2))-E_\beta$. So its linear series corresponds to $|\mc{O}(2)|$ with one assigned base point on $\mb{P}^2$. In particular, $\beta\circ\alpha:=h^0(\Mod_\alpha^{\sigma_\beta}(B^1),D_\beta)=6-1=5$.
\end{example}

\begin{example} Let us consider quiver $C_3^6$ with $\alpha=(3,4,1)$ and $\sigma_\beta=(5,-3,-3)$, then $\tilde{\beta}_\epsilon^\vee=(9,12,9)$ and $\sigma_{\beta_\epsilon^\vee}=(3,-3)$. We knew that $\varphi_E:\Mod_\alpha^{\sigma_\beta}(C_3^6)\to\Mod_{\alpha_\epsilon}^{\sigma_{\beta_\epsilon^\vee}}(\Theta_4)$ is the blow-up of $\mb{P}^3$ along a twisted cubic, and $D_{\beta_\epsilon^\vee}$ clearly corresponds to $\mc{O}(3)$ on $\mb{P}^3$. By Theorem \ref{T:IAD}, $D_\beta=\varphi_E^*(D_{\beta_\epsilon^\vee})-\innerprod{\beta,\epsilon}_Q E_\beta=\varphi_E^*(\mc{O}(3))-E_\beta$. So its linear series corresponds to $|\mc{O}(3)|$ with assigned base points a twisted cubic. The cubic can be defined by the ideal $(x^2-wy,y^2-xz,zw-xy)$, then one can verify that $\beta\circ\alpha:=h^0(\Mod_\alpha^{\sigma_\beta}(C_3^6),D_\beta)=10$.
\end{example}

Among all the weights, the most interesting one is the one related to the anti-canonical character of a representation \cite{Hal}.

\begin{definition}
The {\em anti-canonical character} $\sigma_{ac}$ of the representation $V$ is the character of the representation $\det V$.
\end{definition}

Let us compute the anti-canonical character in the quiver setting, that is, $G=\GL_\alpha$ acts on $V=\Rep_\alpha(Q)$ by the base change.
\begin{align*}
\sigma_{\text{ac}}& =\sum_{a\in Q_1}\alpha(ta)\det{_{ha}}-\alpha(ha)\det{_{ta}}\\
& =\sum_{v\in Q_0}(\sum_{a\in h^{-1}v}\alpha(ta)-\sum_{a\in t^{-1}v}\alpha(ha))\det{_v}.
\end{align*}
So it coincides with the weight $\innerprod{\alpha,-}_Q-\innerprod{-,\alpha}_Q=\innerprod{\alpha+\tau^{-1}\alpha,-}_Q=-\innerprod{-,\alpha+\tau\alpha}_Q$.
We will show under some technical condition $\circledast$, which we think maybe unnecessary in general, that the induced divisor from the anti-canonical weight coincide with the anti-canonical class.
Let $S=\Rep_\alpha\ss{\beta}{ss}(Q)\setminus\Rep_\alpha\ss{\beta}{st}(Q)$ be the strict semi-stable points, the condition $\circledast$ requires $\codim(q(S),\Mod_\alpha^{\sigma_\beta}(Q))\geqslant 2$.

For any $\GL_\alpha$-module $W$, we denote by $\mc{W}$ the sheaf over $\Mod_\alpha^{\sigma_\beta}(Q)$ associated to the module of covariants $(k[\Rep_\alpha(Q)]\otimes W^*)^{\GL_\alpha^{\sigma_\beta}}$. By Luna's slice theorem \cite{Lu}, it is a vector bundle over the geometric quotient $\mc{M}^{\rm st}:=q(\Rep_\alpha\ss{\beta}{st}(Q))$ (at least in the \'{e}tale topology).

\begin{proposition} Under assumption $\circledast$, the induced divisor $D_{ac}^{\sigma_\beta}$ from the anti-canonical weight is the anti-canonical class on $\Mod_\alpha^{\sigma_\beta}(Q)$. In particular, $\Mod_\alpha^{\sigma_{ac}}(Q)$ is a Fano variety.
\end{proposition}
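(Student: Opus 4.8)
The plan is to establish the divisor identity first on the geometric–quotient locus $\D{M}^{\rm st}=q(\Rep_\alpha\ss{\beta}{st}(Q))$, where both sides are honest line bundles, and then extend it across $q(S)$ using that $q(S)$ has codimension $\geqslant 2$ by $\circledast$ together with the normality of $\Mod_\alpha^{\sigma_\beta}(Q)$. First I would record that a $\sigma_\beta$-stable representation $M$ has $\End_Q(M)=k$, so its $\GL_\alpha$-stabilizer is exactly the kernel $K=k^*$ of the action; hence $\GL_\alpha/K$ acts freely on $\Rep_\alpha\ss{\beta}{st}(Q)$, the map $q\colon\Rep_\alpha\ss{\beta}{st}(Q)\to\D{M}^{\rm st}$ is a principal $\GL_\alpha/K$-bundle, and $\D{M}^{\rm st}$ is smooth and is the complement in $\Mod_\alpha^{\sigma_\beta}(Q)$ of the closed set $q(S)$, of codimension $\geqslant 2$ by $\circledast$.

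Next I would run the standard canonical–bundle computation for this bundle. The relative tangent sequence of the smooth $\GL_\alpha$-equivariant morphism $q$ (with $\GL_\alpha$ acting trivially downstairs) is
$$0\longrightarrow \mathcal{O}_{\Rep_\alpha\ss{\beta}{st}(Q)}\otimes(\mathfrak{gl}_\alpha/\mathfrak{k})\longrightarrow T_{\Rep_\alpha\ss{\beta}{st}(Q)}\longrightarrow q^*T_{\D{M}^{\rm st}}\longrightarrow 0,$$
the left map being the infinitesimal action, which is injective because the $\GL_\alpha/K$-action is free; here $\mathfrak{gl}_\alpha$ carries the adjoint action and $\mathfrak{k}$ is its trivial one-dimensional summand. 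Taking top exterior powers of these $\GL_\alpha$-equivariant bundles gives $q^*\omega_{\D{M}^{\rm st}}^\vee\cong\omega_{\Rep_\alpha(Q)}^\vee\otimes\det(\mathfrak{gl}_\alpha/\mathfrak{k})^{-1}$. Since $\Rep_\alpha(Q)$ is the $\GL_\alpha$-representation $V$, its anti-canonical bundle, with its natural equivariant structure, is the trivial line bundle on which $\GL_\alpha$ acts through $\det V$, i.e.\ through the character $\sigma_{ac}$ computed just above the statement; and the adjoint action of $\GL_\alpha$ on $\mathfrak{gl}_\alpha$ is by conjugation, hence of determinant $1$, while $\mathfrak{k}$ is trivial, so $\det(\mathfrak{gl}_\alpha/\mathfrak{k})$ is the trivial character. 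Thus $q^*\omega_{\D{M}^{\rm st}}^\vee$ is the trivial bundle linearized by $\sigma_{ac}$. Because $\sigma_{ac}(\alpha)=\innerprod{\alpha,\alpha}_Q-\innerprod{\alpha,\alpha}_Q=0$, the stabilizer $K$ acts trivially on fibres over stable points, so Kempf's descent lemma identifies $\omega_{\D{M}^{\rm st}}^\vee$ with the line bundle $\D{L}$ on $\D{M}^{\rm st}$ descended from the character $\sigma_{ac}$ — in the notation introduced before the proposition, $\D{L}$ is the covariant sheaf attached to the one-dimensional module $k_{\sigma_{ac}}$.

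On the other hand $D_{ac}^{\sigma_\beta}=q_*(C_N\cap\Rep_\alpha\ss{\beta}{ss}(Q))$ with $c_N\in\SI_\alpha^{\sigma_{ac}}(Q)$, so over the geometric quotient $\D{M}^{\rm st}$ the divisor $D_{ac}^{\sigma_\beta}$ is exactly the zero divisor of the section of $\D{L}$ induced by $c_N$; hence $\mathcal{O}(D_{ac}^{\sigma_\beta})|_{\D{M}^{\rm st}}\cong\D{L}\cong\omega_{\D{M}^{\rm st}}^\vee$. Since $\Mod_\alpha^{\sigma_\beta}(Q)$ is normal and $\D{M}^{\rm st}$ is the complement of a closed set of codimension $\geqslant 2$, a Weil divisor class is determined by its restriction to $\D{M}^{\rm st}$, and therefore $-K_{\Mod_\alpha^{\sigma_\beta}(Q)}=D_{ac}^{\sigma_\beta}$ as divisor classes. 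Specialising to $\sigma_\beta=\sigma_{ac}$, the divisor $D_{ac}^{\sigma_{ac}}$ is the ample divisor $D_\beta$ giving the projective polarization of $\Mod_\alpha^{\sigma_{ac}}(Q)$; hence its anti-canonical class is ample, and, being projective and normal, $\Mod_\alpha^{\sigma_{ac}}(Q)$ is a Fano variety.

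I expect the only delicate point to be the equivariant bookkeeping in the middle step: that $\omega_{\Rep_\alpha(Q)}^\vee$ carries \emph{precisely} the linearization $\sigma_{ac}$ and not some twist, that the adjoint determinant genuinely vanishes, and — crucially for descent — that every bundle in sight carries a $K$-action which is trivial on the fibres over stable points, which is where $\sigma_{ac}(\alpha)=0$ enters. The remaining care is the passage from $\D{M}^{\rm st}$ to all of $\Mod_\alpha^{\sigma_\beta}(Q)$: one must know that $D_{ac}^{\sigma_\beta}$ is an honest divisor (the standing hypothesis on $D_N^{\sigma_\beta}$) and that $q(S)$ has codimension $\geqslant 2$, which is exactly the content of $\circledast$.
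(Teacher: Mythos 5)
Your proof is correct and follows essentially the same route as the paper: the paper works with the generalized Euler sequence $0\to\D{O}_{\D{M}^{\rm st}}^{\dim\hat{G}}\to\D{R}ep_\alpha(Q)\to\D{T}\to 0$ directly on $\D{M}^{\rm st}$, of which your relative tangent sequence upstairs is just the pullback, and both arguments finish by taking determinants (using $\det V=\sigma_{ac}$ and the triviality of the adjoint determinant) and extending across the codimension-$\geqslant 2$ locus $q(S)$ guaranteed by $\circledast$. Your explicit attention to the linearization bookkeeping and to Kempf descent (via $\sigma_{ac}(\alpha)=0$) only makes precise what the paper leaves implicit.
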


\begin{proof} Let $\hat{G}=\GL_\alpha^{\sigma_\beta}/k^*$, where $k^*$ is embedded multi-diagonally. Note that $\hat{G}$ acts faithfully on $\Rep_\alpha\ss{\beta}{st}(Q)$ and $\mc{M}^{\rm st}$ is smooth. There is a generalized Euler sequence on $\mc{M}^{\rm st}$:
$$0\to\mc{O}_{\mc{M}^{\rm st}}^{\dim\hat{G}}\to\mc{R}ep_\alpha(Q)\to \mc{T}\to 0,$$
where $\mc{R}ep_\alpha(Q)$ is the vector bundle corresponding to the $\GL_\alpha$-module $\Rep_\alpha(Q)$ and $\mc{T}$ is the tangent bundle on $\mc{M}^{\rm st}$. Taking the exterior power to the sequence, we see that the statement holds on $\mc{M}^{\rm st}$. But the assumption $\circledast$ says the complement of $\mc{M}^{\rm st}$ has codimension greater than one, so the two classes agree.
\end{proof}

In view of Lemma \ref{L:acproj}, it is interesting to compute $\innerprod{ac,\epsilon}_Q$. This is equal to $\innerprod{\alpha+\tau^{-1}\alpha,\epsilon}_Q=-\innerprod{\epsilon,\alpha}_Q=-\innerprod{\epsilon,\alpha-\epsilon}_Q-1$, which is one less than the codimension of the blow-up locus in Theorem \ref{T:blow-up}. Of course, this agrees with the general blow-up formula for the canonical divisor \cite[Exercise II.8.5]{Ha}.

\begin{example} (Example \ref{Ex:1c} continued) We now can finish the discussion of this example. Remember that all the weights that we took are anti-canonical, so $h^0(\Mod_\alpha^{\sigma_\beta}(B^n),D_{ac})=10-n$. More generally, the Hilbert polynomial of $D_{ac}$ on $\Mod_\alpha^{\sigma_\beta}(B^n)$ is $\frac{1}{2}[(9-n)x^2+(9-n)x+2]$.
\end{example}

\section*{Acknowledgement}
The author would like to thank Professor Harm Derksen for carefully reading the manuscript, for his great patience, and especially his support for very long time. He also wants to thank Professor Michael Thaddeus and Mihai Halic for answering questions concerning their paper. The author is grateful to Professor Lutz Hille for several good advice.

\bibliographystyle{amsplain}

\end{document}